\theoremstyle{definition}
\title[Unstable manifold and periodic solutions]
{The strong  unstable manifold and periodic solutions in  differential delay  equations with cyclic monotone negative feedback} 
\author[Anatoli F. Ivanov and Bernhard Lani-Wayda]{}
\subjclass{Primary: 34K13; Secondary: 34K19, 92B25, 92C45.}
\keywords{monotone cyclic coupling,  Poincar\'e-Bendixson theorem, invariant manifold, periodic solutions, attractor location, gene regulatory systems.}
\thanks{$^*$Corresponding author: Bernhard Lani-Wayda}
\newcommand{\N}{\mathbb N}
\newcommand{\R} {\mathbb R}
\newcommand{\Co}{\mathbb C}
\newcommand{\dist}{\mbox{ dist}}
\newcommand{\graph}{\mbox{ graph}}
\newcommand{\resr}[1]{\raisebox{-1.2ex}{$\big| \raisebox{-0.3ex}{$#1$} $}}
\newcommand{\lmg}{\Big\{ } \newcommand{\rmg}{\Big\} }
\newcommand{\meig}{\;\big| \;}
\newcommand{\menge}[2]{\lmg {#1}\meig{#2}\rmg}
\newcommand{\Real}{\mbox{ \rm Re }}
\newcommand{\bc}{\begin{center}}\newcommand{\ce}{\end{center}}
\newcommand{\begitem}{\begin{itemize}} \newcommand{\itemend}{\end{itemize}}
\newcommand{\eqs}{\begin{equation*}\begin{split}}\newcommand{\eqsend}{ \end{split}\end{equation*}}
\newcommand{\pr}{\mbox{\rm pr}}
\newtheorem{thm}{Theorem}[section]
\newtheorem{cor}[thm]{Corollary}
\newtheorem{lem}[thm]{Lemma}
\newtheorem{prop}[thm]{Proposition}
\newtheorem{rem}[thm]{Remark}
\numberwithin{equation}{section}
\begin{document}
\maketitle

\centerline{\scshape Anatoli F. Ivanov$^{{\href{mailto:aivanov@psu.edu}{\textrm{\Letter}}}\,1}$ and 
Bernhard Lani-Wayda$^{{\href{mailto:Bernhard.Lani-Wayda@math.uni-giessen.de}{\textrm{\Letter}}}*2}$}

\medskip

{\footnotesize
 \centerline{$^1$Department of Mathematics,
Pennsylvania State University,
44 University Drive,
Dallas, PA 18612,
USA}
}

\medskip

{\footnotesize
 \centerline{$^2$Mathematisches Institut der Universit\"at Gie\ss en,
Arndtstr. 2,
35392 Gie\ss en,
Germany}
}
\bigskip

 \centerline{(Communicated by Gergely R\"{o}st)}


\begin{abstract}
For a class of $(N+1)$-dimensional systems of differential delay equations with a cyclic and monotone  negative feedback
structure, we construct a two-dimensional invariant manifold, on which phase curves  spiral outward towards a bounding periodic orbit.
For this to happen we assume essentially only instability of the zero equilibrium.  Methods of the  Poincar\'e-Bendixson theory due to
Mallet-Paret and Sell are combined with techniques  used by Walther for the scalar case $(N = 0)$.  Statements  on  the attractor
location and on parameter borders concerning stability and  oscillation are included. The   results apply to models for gene regulatory
systems, e.g. the `repressilator' system.
\end{abstract}


\section{Introduction}

The classical Poincar\'e-Bendixson theorem for  two-dimensional autonomous vector  fields
asserts  that the $\omega-$limit set of any initial value can  only consist of either a single   equilibrium, or a single  periodic orbit, or a heteroclinic chain which cyclically joins several equilibria. Versions of this theorem for higher dimensional vector fields were proved  by Mallet-Paret and Smith in \cite{MPSmith}, and  by Mallet-Paret and Sell for the  infinite dimensional dynamics generated by delay equations in \cite{MPS1} and  \cite{MPS2}. For these generalizations, monotonicity properties of the feedback functions are  essential, and have  the consequence that two-dimensional curves $\pi x$
obtained from the higher-dimensional phase curves $x$ (e.g., by  considering  $(x_i(t), x_{i+1}(t)) \in \R^2$ for some $i\in \{1,\ldots , N\}$ instead of $x(t) \in \R^n$)   have the property that  $\pi x$  and $\pi y $ do not intersect
 for  different globally defined phase curves $x$ and $y$, thus allowing for classical
 Poincar\'e-Bendixson-type arguments in the  plane.
A  discrete Lyapunov functional (roughly: counting sign changes of the solution) plays a main role in the arguments  from \cite{MPSmith}, \cite{MPS1} and  \cite{MPS2}, and the property that it is non-increasing along phase curves  is, again,  a consequence of monotonicity of the feedback.
Different oscillation classes of solutions can  be  distinguished according to the values of this Lyapunov functional.  The results from \cite{MPSmith},  \cite{MPS1} and  \cite{MPS2}  are frequently cited in  papers concerned with gene regulatory mechanisms, see e.g. \cite{HoriEtAl2010},  Proposition 1, p. 6, and p. 24, or \cite{HoriEtAl2013},   Section 3.1, p. 2583, or \cite{SamadEtAl}, p. 4406.

\medskip For scalar delay equations of the type $ \dot x(t) = - \mu x(t) + f(x(t-1)) $  with
$f$ monotonous\-ly decreasing and satisfying a negative feedback condition,  Walther
constructed a two-dimen\-sional  invariant manifold  with a boundary in  \cite{Wal91}, the boundary formed by a periodic orbit. This manifold is contained in the set $S$ of solution  segments  with at most one sign change, and can  be represented  as a graph over the two-dimensional leading eigenspace of
the linearization of the equation at the zero equilibrium.  This approach uses the set $S$
  (which corresponds to the value 1 of the  Lyapunov functional, and to the set  of the so-called slowly oscillating  solutions) and its invariance properties,
 but   not the discrete  Lyapunov functional explicitly.  In   later work \cite{Wal95}, Walther extended the result from   \cite{Wal91} to a description of the whole part of the attractor
within the class of  slowly oscillating  solutions, showing that it is  also a two-dimensional   graph over the same eigenspace (an extension of  the one just mentioned), with possibly several nested
periodic orbits.
A similar result for a positive monotone feedback situation was obtained by Krisztin, Walther and Wu in \cite{KriWaWu}, the part of the attractor described being three-dimensional here
(it was often called the `spindle').  In this work, the
zero-counting Lyapunov functional was also used.

\medskip
The approach from  \cite{Wal91} and  \cite{Wal95}  has the advantage that the position of
periodic orbits and an invariant manifold  in phase space (the space of continuous  functions  on
the delay interval $[-\tau, 0])$,  relative to  another significant object (the two-dimensional leading eigen\-space at zero)  is described. The  injective projection  from the
invariant manifold to this space takes place in phase space, and  is not of the nature of the maps $ \pi $  with values in $ \R^2$ mentioned above. On the other hand, these
results are restricted to  scalar equations, while the  methods from \cite{MPS1} and  \cite{MPS2}  have no restriction on the dimension of the system.

We briefly mention a selection of  previous results related to  periodic solutions
and the discrete Lyapunov functional:
The meanwhile classical paper  \cite{MP88} (Theorem D, p. 282)  on scalar delay equations with negative feedback obtained periodic solutions  in each oscillation class  of a Morse decomposition for  the attractor.
The argument there (p. 303 in \cite{MP88} was based on the Nussbaum fixed point index  \cite{Nussbaum}.
Cao \cite{Cao}  proved uniqueness of slowly oscillating periodic solutions for scalar negative feedback  equations as in \cite{MP88}, under the following condition in the nonlinearity $f$:
$$ x \mapsto x\cdot  f'(x) / f(x) \text{is strictly decreasing/increasing  for positive/negative } x. $$
In \cite{KriWa} it was shown that for  positive feedback  under additional hypotheses, in particular oddness of $ f$ and the  condition from \cite{Cao},  the `spindle'   from  \cite{KriWaWu} is actually the global attractor  and hence  the non-constant periodic orbit is unique.
In \cite{YiChenWu},  the special  case of  three-dimensional cyclic positive  feedback systems with the same nonlinearity in all three equations was considered, also under the condition from \cite{Cao}, and the $n-$dimensional extension sketched.   The existence of (unique, except for the value 2 of the Lyapunov functional, where  the proof of a symmetry property  of periodic solutions does not work) periodic solutions could be limited to finitely many  oscillation classes (Theorem 3.16, p. 47 there). The global attractor was then characterized as  the union of two stable equilibria, with the unstable  manifold of zero, and the unstable manifolds of all periodic orbits (Theorem 4.5, p. 49).
In \cite{GarabKrisztin},  scalar delay equations with  possibly positive or negative feedback and the condition from \cite{Cao} were considered,  and also  a special case of cyclic feedback system (with unidirectional coupling and  (as in \cite{YiChenWu} for the case of three equations)  the same feedback function in all equations).  In that  work,  Garab and Krisztin  obtained  uniqueness of slowly oscillating periodic solutions and asymptotic results on the dependence of their  period on the delay.
For a specific, piecewise linear scalar delay equation with possibly positive or negative feedback, more
 specific  information (in particular, on uniqueness)   on periodic solutions in various oscillation classes
(i.e., level sets of the Lyapunov functional)  was  obtained in \cite{Garab} (Theorem  4.3 on p. 2384).
In  both works \cite{Garab, Krisztin}and \cite{Garab},  phase plane arguments in the spirit of the
paper by Cao \cite{Cao} (reminiscent of Sturm-Liouville theory)  were important  for the uniqueness.

\medskip In the present paper we consider only systems with negative feedback.
Intuitively, in the situation of  the Poincar\'e-Bendixson theorem from
\cite{MPS1} and  \cite{MPS2}, if zero is the unique   equilibrium
 and if   solutions starting on the unstable manifold of zero cannot  come back in a homoclinic way, then their  $\omega-$limit set should be a periodic orbit. (In particular, there should exist a non-constant periodic orbit.)

We make this argument precise in the present paper,  combining  an analogue of the geometric approach in  phase  space from \cite{Wal91}  with the  `$(N+1)$-dimensional'  methods, in particular the
Lyapunov functional, from \cite{MPS1} and  \cite{MPS2}.
We obtain an analogue of the result from \cite{Wal91} for  systems with negative feedback
(an invariant two-dimensional  manifold,  an extension of   a  local `strong unstable' manifold,   with bounding periodic orbit).   The role of the set $S$  (functions with at most one sign change)  from \cite{Wal91} is taken by the level set $ \Sigma$  of the Lyapunov functional corresponding to level $1$.

Under the monotonicity assumptions, this  result gives, in particular, periodic  solutions  in situations  which had to be excluded in the papers  \cite{IvaBLW04} and \cite{IvaBLW20} (where, in turn, no monotonicity was assumed).
These situations are presence of several unstable eigenvalue pairs corresponding to slow oscillation,  or coexistence  of real  eigenvalues. (We comment on this
in more detail  after Theorem \ref{PerSolMfd}.)

\medskip
The general organization of the paper is as follows. Section 2 mainly introduces the reader
to results from  \cite{MPS1} and  \cite{MPS2},  with some  explanatory comments, and with some results on linear equations of continuous dependence type.
Section 3 constructs a local (`leading' or `strong') unstable manifold $W^{\mathrm{uu}}$ tangent at zero  to the corresponding eigenspace $X^{\mathrm{uu}}$,    and then shows that the closure $\overline{W}$ of its  continuation by the semiflow is  a  graph over $X^{\mathrm{uu}}$. All initial states in this manifold have a periodic  orbit as their $\omega-$limit set.
Section 4 gives  the main theorem which states, in particular, that the mentioned periodic
orbit is the same for all initial   states in  $\overline{W}$, and coincides with the `boundary'  $\overline{W}\setminus W$.  This theorem
does not  include earlier results from  \cite{AdH}, \cite{HT}, \cite{IvaBLW04}, \cite{IvaBLW20} on periodic solutions, which did not use monotonicity assumptions.
But these earlier results used a uni-directional feedback structure, and also assumed that the coupling of each  variable  $x_i $ to itself  is given by a simple decay term $ - \mu_i x_i$.
 In the present paper, the coupling structure can be more complex,  but monotonicity is required;
in this sense the results complement   the quoted earlier ones.

In Section 5 we  provide additional statements  on the rough location of the attractor (and thereby also the periodic orbit) for the  special case of systems mentioned in the last paragraph (with self-coupling  by a simple decay term). For these special systems, we also discuss the possible relationships  between the stability border and the oscillation border in terms of a parameter that represents the total feedback strength.

Cyclic gene regulatory systems with negative  feedback around the loop (in particular, the
so-called repressilator introduced by Elowitz and Leibler \cite{EloLeib})
  belong to the special class considered in Section 5.  Such systems  are treated as an illustrating application in the final Section 6.

\section{Cyclic monotone negative feedback systems, linearization and discrete Lyapunov functional}
\label{CyclicMono}

\subsection{Cyclic monotone feedback} In this section we consider, as in  \cite{MPS2}, an  $(N+1)$-dimen\-sion\-al
autonomous system of differential equations which  are, via an appropriate transformation,
 equivalent  to systems in the standard
feedback form
\begin{equation}
\left\{\begin{aligned}
\dot x_0  & = f^0(x_0, x_1), \; \\
\dot x_i  & = f^i(x_{i-1}, x_i, x_{i+1}), \; 1 \leq i \leq N-1, \\
\dot x_N  & = f^N(x_{N-1}, x_N, x_{0}(t- \tau)).
\end{aligned} \right. \label{standardfb}
\end{equation}
Here  the argument $t$ was omitted everywhere except in  the last equation,  where it appears with the delay $ \tau \geq 0$. This
corresponds to the  standard feedback form given by  formula (1.5), p. 443 in \cite{MPS2}, only that the delay is not normalized to 1.
We generally think of $ N \in \{1, 2,  3,   \ldots \}$, but, as   in  \cite{MPS1} and \cite{MPS2}
(see especially formula (1.2) on p. 386  of \cite{MPS1},  and the bottom of  p. 441 in \cite{MPS2}),  the case $N = 0$ is  included. In this  case, system  (\ref{standardfb}) should be read as the scalar equation
 $$\dot x_0(t) = f^0(x^0(t), x^0(t-\tau)).$$
In this special scalar case, our results coincide very much with those from   \cite{Wal91} and  \cite{Wal95}, but also allow a  nonlinear  dependence of $f^0$ on $x^0(t)$.

The results from \cite{MPS1} and \cite{MPS2}, that we will use in this paper, have to be read on
the history  interval $ [-\tau, 0],$  instead of $[-1,0],$  without  this normalization.
(Abbreviating  \eqref{standardfb} as $ \dot x (t)  = \vec{f}(x_0   \ldots , x_N, x_0(t-\tau))$,
the transformation $y(t) := x( \tau t)$  gives $\dot y (t) = \tau \vec{f}(y_0,   \ldots , y_N,y_0(t-1))$.)

We also assume that the functions $f^i$ in \eqref{standardfb} are of class $ C^1$, and the
monotonicity  conditions
\begin{equation}
\left\{\begin{aligned} \partial_2 f^0 &  >0, \\
\partial_1f^i &\geq 0, \; \text{ for } i \in \{1,  \ldots ,N\}, \\
\partial_3f^i & >  0, \; \text{ for } i \in \{1,  \ldots ,N-1\}, \\
\delta^*\cdot \partial_3f^N &  >0,  \end{aligned}  \right.
 \label{monotone}
\end{equation}
where all inequalities are assumed to hold for arbitrary values of their variables, and $\delta^* \in \{\pm1\}$  indicates either
the  positive or the negative nature of the feedback across the whole loop; we are interested in the case
where $ \delta^* = -1$ (negative feedback).
Following \cite{MPS2}, we call such a system a \textit{cyclic monotone feedback system in standard form.}
Transformations of the type $\tilde x_i(t) := - x_i(t)$  or $\tilde x_i(t) := x_i(t - \tau_i)$ may
equivalently bring systems, which do not a priori have it, to this standard form.
We also assume that $f^0(0,0) = 0 = f^i(0,0,0), \; 1 \leq i \leq N$, so that zero is an equilibrium solution for system \eqref{standardfb}. (This assumption will be strengthened
in  assumption \textbf{(A1)} below.)  Together with conditions \eqref{monotone}, one obtains that system \eqref{standardfb} satisfies the feedback inequalities from formulas (1.8) and (1.9) of \cite{MPS1}, p. 389. Note here the mistype in formula (1.9) of  \cite{MPS1} for the $g^i\;  (1 \leq i \leq N-1)$, where
$g^i(t,u,0,v) \geq 0$ in both cases $ u,v \geq 0 $ and $u,v  \leq 0$, and the correct form in
formula (2.2) of the same paper (with $ \delta_i = 1$ for  $1 \leq i \leq N-1$, and
$f^i(t,u,0,v) \geq 0$ in case $ u, v \geq 0$, but $f^i(t,u,0,v) \leq 0$ in case $ u, v \leq 0$).
Thus, system  \eqref{standardfb} is also a system in \textit{standard feedback form} in the sense of paper \cite{MPS1}, p. 389.

\subsection{Associated linear systems}
Consider now two solutions $x,y$ of \eqref{standardfb}, and their difference
$ \Delta: = y-x$.
Writing $f^i(y_{i-1}, y_i, y_{i+1}) - f^i(x_{i-1}, x_i, x_{i+1})$ as a sum of three differences, where
in each difference only one variable changes, and  setting
\[\begin{aligned} a_i(t) & := \int_0^1 \partial_1f^i[x_{i-1}(t) + s\cdot (y_{i-1}(t) - x_{i-1}(t)), y_i(t),  y_{i+1}(t)] \, ds, \\
   c_i(t) & := \int_0^1 \partial_2f^i[x_{i-1}(t), x_i(t) + s\cdot(y_i(t)- x_i(t)), y_{i+1}(t)] \, ds,\\
 b_i(t) & := \int_0^1 \partial_3f^i[x_{i-1}(t), x_i(t), x_{i+1}(t) + s\cdot (y_{i+1}(t)- x_{i+1}(t))] \, ds,
\end{aligned}
\]
we obtain
\begin{equation}\dot\Delta_i(t) = a_i(t) \Delta_{i-1}(t) + c_i(t) \Delta_i(t) + b_i(t) \Delta_{i+1}(t), \; 1 \leq i \leq N-1. \label{nonaut1} \end{equation}
With the analogous definition of $a_N(t), c_N(t), b_N(t)$, but replacing  $x_{N+1}(t) $ and $y_{N+1}(t)$ by
$ x_0(t-\tau)$ and $  y_0(t-\tau)$, we have
\begin{equation}\dot\Delta_N(t)  = a_N(t) \Delta_{N-1} + c_N(t) \Delta_N + b_N(t) \Delta_0(t - \tau). \label{nonaut2} \end{equation}
Finally,
\begin{equation}\dot \Delta_0(t)  = c_0(t) \Delta_0(t) + b_0(t) \Delta_1(t),
\label{nonaut3} \end{equation}
where $c_0(t) := \int_0^1 \partial_1f^0(x_0 + s\cdot (y_0- x_0), y_1) \, ds$ and
 $b_0(t) := \int_0^1 \partial_2f^0(x_0, x_1 + s\cdot (y_1- x_1)) \, ds$.
Here, again, the argument $t$ is omitted in all $\Delta_i,  x_i, y_i$, except where it appears as the delayed one $ t - \tau$.

From \eqref{monotone} in the negative feedback case, the  linear (in general, non-autonomous) system of differential equations for the $ \Delta_i$  has the properties
\begin{equation}
b_0(t)> 0, \; a_i(t) \geq 0, b_i(t)  >0 \text{ for }  i \in \{1,  \ldots ,N-1\}, \;a_N(t) \geq 0,  \, b_N(t) < 0,
\text{ for all } t. \label{coeffsigns}
\end{equation}
(The notation for the coefficient functions  here corresponds  to formula (3.4)  on p. 399 of
 \cite{MPS1}, except that  these functions do not have to be periodic here, and we use lower indices.)
 Analogous  systems  (also satisfying  \eqref{coeffsigns}) are similarly obtained from the variational equation
along a particular solution $x$ (then the integrals in the above definitions have to be
replaced by evaluation of the partial derivatives at that solution), and, as a special case, from the
linearization of system \eqref{standardfb} at an equilibrium solution. In that case, the constant coefficient system  reads as
\begin{equation}
\left\{\begin{aligned}\dot x_0(t)  & =  c_0 x_0(t)  + b_0x_1(t), \\
\dot x_i(t)   &= a_i x_{i-1}(t)  + c_ix_i(t)  + b_ix_{i+1}(t), \; 1 \leq i \leq N-1, \\
\dot x_N(t)   &= a_N x_{N-1}(t)  + c_Nx_N(t)  + b_N x_0(t - \tau)
\end{aligned}\right. \label{constcoeff}
\end{equation}
(compare formula (3.13), p. 401 in \cite{MPS1}).

\medskip It is generally  important that under the monotonicity conditions  \eqref{monotone},
 all  these three linear systems   obtained from any  one of the three procedures
\begin{equation} \left\{ \begin{aligned}
\text{1) }  & \text{ linearization at an equlibrium; }\\
\text{2) }  & \text{ linearization along a non-constant solution; } \\
\text{3) } &  \text{describing the difference between two solutions}
\end{aligned} \right. \label{3linsyst}
\end{equation}
satisfy conditions \eqref{coeffsigns}.
Thus these linear  systems have  the coupling structure and satisfy  the  feedback inequalities (in corrected form), which are
assumed in formulas  (1.8) and (1.9) of \cite{MPS1}, p. 389.

\subsection{State space and solutions of linear and nonlinear systems.}
Similar to  \cite{MPS1}, p. 394  and \cite{IvaBLW20}, p. 5379,   we use the state space $\mathbb{X} = C^0([-\tau, 0], \R) \times \R^N$
for system \eqref{standardfb}, and as well for systems of the type \eqref{nonaut1}-\eqref{nonaut3} or \eqref{constcoeff}. The
interpretation is that a solution $x$  starts with $x_0(t_0 + \cdot)\resr{[-\tau, 0]} = \varphi \in
 C^0([-\tau, 0], \R)$ and with $(x_1(t_0),   \ldots , x_N(t_0))   \in \R^N$.
 On $\mathbb{X}$   we  use the norm defined by $||[\varphi, x_1,   \ldots , x_N]||_{C^0} := \max\{||\varphi||_{\infty}, |x_1|,   \ldots , |x_N|\}$,
 which makes  $\mathbb{X}$ a
 Banach space (it corresponds to $C = C(\mathbb{K})$ in the notation  of \cite{MPS2}).
Occasionally we also use the stronger norm
 \[||\, [\varphi, x_1,   \ldots , x_N]\, ||_{C^1} := \max\{||\varphi||_{\infty}, ||\varphi'||_{\infty},  |x_1|,   \ldots , |x_N|\}\]
on the  subspace $ \mathbb{X} ^1 := \menge{[\varphi, x_1,   \ldots , x_N] }{ \varphi \in C^1([-\tau, 0], \R])}$ of $\mathbb{X}$.

For solutions $x:[t_0 - \tau, \infty) \to \R^{N+1} $  either of \eqref{standardfb}
or of one of the linear systems in \eqref{3linsyst}
with state $\psi  = [x_0(t_0 + \cdot)\resr{[-\tau,0]}, x_1(t_0),  \ldots , x_N(t_0)] \in \mathbb{X}$ at time $t_0$,
we use the notation
\[
x^{\psi}_t := [x_0(t + \cdot)\resr{[-\tau,0]}, x_1(t),  \ldots , x_N(t)]
= [(x_0)_t,  x_1(t),  \ldots , x_N(t)] \in \mathbb{X}
\]
for  the state of that solution at a time $ t \geq t_0$ (also for $t < t_0$, if the solution also  exists in backward time). Here, as
usual,  $(x_0)_t(\theta) = x_0(t + \theta), -\tau \leq \theta \leq 0$.  Complex-valued solutions of linear systems are defined
correspondingly, with  $\mathbb{X}_ {\Co}  := C^0([-\tau, 0], \Co) \times \Co^N$.

Note that for $\psi \in \mathbb{X}, t_0 \in \R$ and the corresponding solution $x$ of  any of the
systems under consideration with $x^{\psi}_{t_0} = \psi \in  \mathbb{X} $  one has
$x^{\psi}_{t_0+\tau} \in  \mathbb{X}^1$, if the solution exists at least on $[0,\tau]$
in forward time.

For the linear systems, one thus obtains linear evolution operators
\begin{equation}   U(t, t_0):  \mathbb{X} \to  \mathbb{X},
   \label{evop} \end{equation}
for all $t_0$ and $ t \geq t_0$ such that the coefficient functions of  the system are defined at least on the interval $[t_0, t]$.  Also, $  U(t, t_0)$ maps $\mathbb{X}$ into $\mathbb{X} ^1$ if $ t \geq \tau$.

The nonlinear  system  \eqref{standardfb} generates a semiflow $\Phi$ on the space $\mathbb{X}$,
given by $ (t,  \psi) \mapsto  x^{\psi}_t$  for $t \geq 0 $ and $ \psi \in  \mathbb{X}$  such that the corresponding solution  is defined on $[0, t]$.

We also note that a constant coefficient  equation of the form \eqref{constcoeff}
generates a $C^0$-semigroup $\{S(t)\}_{t \geq 0} $ of linear operators  $S(t) \in L_c(\mathbb{X}, \mathbb{X})$, which coincide with
the time-$t$-maps $\Phi(t, \cdot)$  for the case of this special  equation.

All three types of linear equations
mentioned above are of the form
\[
\dot x(t) = A(t) x(t) + B(t) x(t-\tau),
\]
with  $A(t), B(t) \in \R^{n \times n},$ where $ n = N+1$.
Here $ A(t) $ has a tridiagonal structure and $B(t)$ has only one nonzero entry,
compare \cite{MPS1}, p. 402. The subsequent  simple results on dependence of solutions of such equations
on $A$ and $B$ do not require cyclic feedback structure. We simply write $  |\;| $ for the $\max$-norm
on $ \R^n$, and also for the  induced norm on $\R^{n\times n}$.
  For continuous or $C^1$-functions $f$ on a compact  interval $I$, we shall use the $\max$-norm $||f||_{C^0} = ||f||_{C^0(I)} $, the $ C^1$
  norm $||f||_{C^1} = \max\{||f||_{C^0}, \,  ||f'||_{C^0}\}$,   and the $L^1-$norm $||f||_{L^1(I)} = \int_I |f| $.
\begin{lem}  \label{lem21} Assume $n \in  \N, \; t_0 \in \R$ and $A, \tilde A, B, \tilde B \in C^0([t_0, t_0 + \tau], \R^{n \times n})$,  and consider the solutions $x,y$ of
	\[\dot x(t) = A(t) x(t) + B(t) x(t-\tau), \quad \dot y(t) = \tilde A(t) y(t) + \tilde B(t) y(t-\tau)\]
	with the same initial value  $x(t_0 + \cdot)\resr{[- \tau, 0]} = y(t_0 + \cdot)\resr{[- \tau, 0]} = 	\varphi \in C^0([-\tau, 0], \R^n)$.
	
	There exist positive  constants $c_{A,B}, d_{A,B}, \tilde d_{A,B}$ which can be expressed in terms of  the numbers \\ $||A||_{C^0}, ||B||_{C^0}$  and $ \tau$  such that if \[\max\{ || \tilde A - A||_{C^0}, || \tilde B - B||_{C^0}\} \leq 1 \] then $y$ and  $\Delta := y-x $ satisfy
	\begin{align} ||y ||_{C^0([t_0,t_0 + \tau] )}&  \leq   c_{A,B}||\varphi||_{C^0},  \label{2.1.1} \\
	||\Delta ||_{C^0([t_0,t_0 + \tau] )}  & \leq   d_{A,B}(||A-\tilde A ||_{L^1} + ||B-\tilde B ||_{L^1})\cdot ||\varphi||_{C^0}, \label{2.1.2}\\
	||\dot \Delta ||_{C^0([t_0,t_0 + \tau] )} &\leq
 \tilde d_{A,B}(||A-\tilde A ||_{C^0} + ||B-\tilde B ||_{C^0})\cdot  ||\varphi||_{C^0}. \label{2.1.3}
\end{align}

	\end{lem}
\begin{proof}$x$ and $y$ satisfy the ODEs $\dot x(t) = A(t)x(t)  + f(t), \quad \dot y(t) = \tilde A y(t) + \tilde f(t)$, with
	$f(t) := B(t)\varphi(t-t_0-\tau)$ and  $\tilde f(t) := \tilde B(t)\varphi(t-t_0-\tau)$ for $t \in [t_0, t_0 + \tau]$.
	From the estimate in the last formula before formula (1.9) on p. 82  in Chapter III of  \cite{HaleODE} one sees that there exists
	a constant $c_{\tilde A}  >0$ expressible in terms of   $ ||\tilde A||_{C^0} $ such that
	\begin{align*} ||y||_{C^0([t_0, t_0 + \tau])}&  \leq  c_{\tilde A}  (\varphi(0) + 	||\tilde f ||_{L^1(t_0, t_0 + \tau)})
 \leq 	c_{\tilde A}  (\varphi(0) + 	\tau ||\tilde B||_{C^0}||\varphi||_{C^0} )\\
&  \leq 	   	c_{\tilde A}   (1 + 	\tau ||\tilde B||_{C^0}) ||\varphi||_{C^0}. \end{align*}
 Using
	$|| \tilde A - A||_{C^0} \leq 1 $,  the constant $c_{\tilde A}$ can be replaced  by a constant $c_A \geq c_{\tilde A}$.
	   Using also   $ || \tilde B - B||_{C^0}\} \leq 1$ we conclude
	\[ ||y||_{C^0([t_0, t_0 + \tau])} \leq \underbrace{ c_A[ 1 + \tau(1  +|| B||_{C^0}) ]}_{ =:c_{A,B}} \cdot ||\varphi||_{C^0},\] which proves \eqref{2.1.1}.
	  Inserting this inequality  in  estimate (1.10) from  \cite{HaleODE}, p. 83 and  noting that  $x(t_0) = y(t_0) = \varphi(0)$,
	  one obtains a constant $d_A >0$ depending on $||A||_{C^0}$ such that
	 \[ \begin{aligned} ||\Delta||_{C^0([t_0, t_0 + \tau])}
&\leq d_A  ||y||_{C^0([t_0, t_0 + \tau])}  \cdot ||A-\tilde A ||_{L^1} +  ||f-\tilde f ||_{L^1}\\
&\leq d_A \cdot  c_{A, B}|| \varphi||_{C^0} \cdot ||A-\tilde A ||_{L^1} +  ||f-\tilde f ||_{L^1}\\
	 &\leq   d_A \cdot  c_{A, B}|| \varphi||_{C^0} \cdot ||A-\tilde A ||_{L^1} + ||B- \tilde B||_{L^1} \cdot ||\varphi||_{C^0}\\
	 &\leq \underbrace{\max\{  d_A \cdot c_{A, B}, 1\}}_{ =:d_{A,B}} \cdot(||A-\tilde A ||_{L^1} + ||B- \tilde B||_{L^1})\cdot  ||\varphi||_{C^0}.
	 	 \end{aligned} \]
	 	 Estimate \eqref{2.1.2} follows  with the indicated choice of $d_{A,B}$.
Further,  using  \eqref{2.1.1}, \eqref{2.1.2} and that
$ ||\; ||_{L^1} \leq  \tau ||\; ||_{C^0}$, one obtains  for  $ t \in [t_0, t_0 + \tau]$
\[
\begin{aligned} |\dot \Delta(t)| & = |A(t) x(t) + B(t) \varphi(t-\tau) - \tilde A(t) y(t) - \tilde B(t)
 \varphi(t-\tau)| \\
& = |A(t)(x(t) - y(t)) + (A(t) - \tilde A(t)) y(t) + (B(t) - \tilde B(t))\varphi(t- \tau) | \\
&\leq ||A||_{C^0}\cdot  ||\Delta||_{C^0([t_0, t_0 + \tau])}  +
||A-\tilde A ||_{C^0} \cdot||y||_{C^0([t_0, t_0 + \tau])} + ||B-\tilde B||_{C^0}||\varphi||_{C^0} \\
&\leq ||A||_{C^0}  d_{A,B}(||A-\tilde A ||_{L^1} + ||B-\tilde B ||_{L^1})\cdot ||\varphi||_{C^0} + \\
 & \;  + ||A-\tilde A ||_{C^0} \cdot c_{A,B}  ||\varphi||_{C^0} + ||B-\tilde B||_{C^0}||\varphi||_{C^0} \\
&\leq  \underbrace{\max\{ \tau \cdot ||A||_{C^0}  d_{A,B},  c_{A,B}, 1 \}}_{=:\tilde d_{A,B}} \cdot(||A-\tilde A ||_{C^0} + ||B-\tilde B ||_{C^0} ||)\cdot || \varphi||_{C^0},
\end{aligned} \]
which proves \eqref{2.1.3}.
\end{proof}
For $t_0 \in \R$ and a  linear equation as in Lemma  \ref{lem21}, consider now the
evolution operators  $U_{A,B}(t_0 + \tau, t_0): C^0([-\tau, 0],\R^n) \to  C^0([-\tau, 0],\R^n) $
which map an initial function  $ \varphi $ to  the solution segment $x_{t_0 + \tau}$, where
$x $ is the solution with $x(t_0 + \cdot)\resr{[- \tau, 0]} = \varphi$.
We briefly write $C^0$ and $C^1$ for $ (C^0([-\tau, 0],\R^n), ||\; ||_{C^0})$ and
$(C^1([-\tau, 0],\R^n), ||\;||_{C^1})  $.
 Since $U_{A,B}$ takes values in $ C^1$, we can consider it as a linear
 operator from $ C^0 $ to $ C^0$ but also from $C^0$ to $ C^1$.

\begin{cor}  \label{evopcont}  One has $U_{A,B}(t_0 + \tau, t_0) \in L_c(C^0, C^1)$ (i.e., the evolution operator is continuous even if one takes the $C^1$ norm in the image), and the maps
\[  \begin{aligned} \, [C^0([t_0, t_0 + \tau], \R^{n \times n}), ||\;||_{L^1}]^2  \ni (A,B) &\mapsto U_{A,B}(t_0 + \tau, t_0)   \in L_c(C^0, C^0) \; \text{ and }  \\
\, [C^0([t_0, t_0 + \tau], \R^{n \times n}),  ||\;||_{C^0}]^2 \ni (A,B) &\mapsto   U_{A,B}(t_0 + \tau, t_0)   \in L_c(C^0, C^1)\;
\end{aligned} \]
	are  locally Lipschitz continuous, with local  Lipschitz constants
	depending only on the values of  $||A||_{C^0}, ||B||_{C^0} $ and $ \tau$.
\end{cor}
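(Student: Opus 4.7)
The plan is to read off all four assertions directly from the three estimates in Lemma \ref{lem21}. For the first claim, that $U_{A,B}(t_0+\tau, t_0) \in L_c(C^0, C^1)$, I would apply \eqref{2.1.1} with $\tilde A = A$ and $\tilde B = B$ (so that $y = x$), obtaining $\|x\|_{C^0([t_0, t_0+\tau])} \leq c_{A,B}\|\varphi\|_{C^0}$. The companion bound on $\|\dot x\|_{C^0([t_0, t_0+\tau])}$ is then read off directly from the differential equation $\dot x(t) = A(t) x(t) + B(t)\varphi(t - t_0 - \tau)$ on $[t_0, t_0+\tau]$ together with the $C^0$-bound just obtained. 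Passing from $[t_0, t_0+\tau]$ back to $[-\tau, 0]$ by the shift inherent in $x_{t_0+\tau}$, one concludes that $\|U_{A,B}\varphi\|_{C^1}$ is controlled by a constant depending only on $\|A\|_{C^0}, \|B\|_{C^0}$ and $\tau$, times $\|\varphi\|_{C^0}$.

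For the local Lipschitz statements, I would fix a base point $(A_0, B_0)$ and restrict attention to the $C^0$-neighborhood
\[
V := \{(A, B) : \max\{\|A - A_0\|_{C^0},\, \|B - B_0\|_{C^0}\} \leq 1/2\},
\]
so that any two pairs in $V$ differ by at most $1$ in the $C^0$-norm; this validates the hypothesis of Lemma \ref{lem21} when applied between them. Since $c_{A,B}, d_{A,B}, \tilde d_{A,B}$ are expressible through $\|A\|_{C^0}, \|B\|_{C^0}$ and $\tau$ alone, they remain uniformly bounded on $V$ by constants depending only on $\|A_0\|_{C^0}, \|B_0\|_{C^0}$ and $\tau$. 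Estimate \eqref{2.1.2} then yields the Lipschitz bound for the first map in $L_c(C^0, C^0)$ with respect to the $L^1$-distance on the source, while combining \eqref{2.1.3} with \eqref{2.1.2} (and using $\|\cdot\|_{L^1} \leq \tau\,\|\cdot\|_{C^0}$ to convert the $C^0$-term in \eqref{2.1.2} to a $C^0$-perturbation) produces the Lipschitz bound for the second map into $L_c(C^0, C^1)$.

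The only conceptual point worth flagging is that the neighborhood $V$ must be taken in the stronger $C^0$-topology even for the first map, where the source is metrized by $L^1$: Lemma \ref{lem21}'s hypothesis is a $C^0$-closeness condition, so an arbitrary $L^1$-small ball would not suffice to validate the constants. Beyond this bookkeeping subtlety, the corollary is essentially an unpacking of Lemma \ref{lem21} and requires no further estimates; I do not expect any real obstacle.
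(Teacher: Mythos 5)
Your proposal is correct and follows essentially the same route as the paper: both proofs read the three bounds off Lemma \ref{lem21}, using \eqref{2.1.2} for the $L^1$-Lipschitz estimate of the first map and combining \eqref{2.1.2} with \eqref{2.1.3} (converting $L^1$ to $C^0$ via $\|\cdot\|_{L^1}\leq\tau\|\cdot\|_{C^0}$) for the second. Your explicit note that local Lipschitzness of the first map must be understood on $C^0$-neighborhoods of $(A_0,B_0)$ rather than $L^1$-balls, since that is the closeness required in Lemma \ref{lem21}, is a correct and worthwhile clarification of the statement, which the paper leaves implicit.
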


\begin{proof} If  $\max\{ || \tilde A - A||_{C^0}, || \tilde B - B||_{C^0}\} \leq 1 $  then
\eqref{2.1.2}  gives for $\varphi \in C^0$  that
\[||U_{A,B}(t_0 + \tau, t_0) \varphi - U_{\tilde A, \tilde B}(t_0 + \tau, t_0) \varphi||_{C^0} \leq
d_{A,B}(||A-\tilde A ||_{L^1} + ||B-\tilde B ||_{L^1})\cdot ||\varphi||_{C^0}, \]
which proves the assertion for the first map.  Combining estimates \eqref{2.1.2} and
\eqref{2.1.3} and again estimating $||\;||_{L^1} $ by  $\tau\cdot ||\; ||_{C^0}$, one sees
that  (in the notation of  Lemma \ref{lem21})
\begin{align*}\; & \quad ||U_{A,B}(t_0 + \tau, t_0) \varphi - U_{\tilde A, \tilde B}(t_0 + \tau, t_0) \varphi||_{C^1}
 =||\Delta||_{C^1} \leq  \\ &  \leq \gamma_{A,B} (||A-\tilde A ||_{C^0} + ||B-\tilde B ||_{C^0})\cdot || \varphi||_{C^0},
\end{align*}
where $\gamma_{A,B} := \max\{\tau d_{A,B}, \,  \tilde d_{A,B}\}$.
The second assertion follows.
 \end{proof}

\begin{rem}
  Statements  similar to the  ones above can be proved for  more general linear equations, but we do not pursue that.
\end{rem}

\medskip
We specialize the above results to matrices given by  the systems from \eqref{3linsyst}, i.e., systems like
\eqref{constcoeff}, but with possibly time-dependent coefficients.
Then we can  consider the spaces $\mathbb{X}$ and $\mathbb{X}^1 $ instead of $C^0([-\tau, 0],  \R^{N+1})$
and   $C^1([-\tau, 0], \R^{N+1})$, and obtain the following result on the evolution operators as defined in \eqref{evop} as immediate consequence of Corollary \ref{evopcont}.

\begin{cor}  \label{evopcontX} The  evolution operators $  U(t_0+ \tau, t_0) $ from \eqref{evop}
are in $L_c(\mathbb{X}, \mathbb{X}^1)$, and  with respect to the norm on this space they
depend locally Lipschitz continuously on the  coefficient functions
$a_i, b_i, c_i$ with the $\max$-norm on $[t_0, t_0 + \tau]$.

The local Lipschitz constant at some specific set of coefficient functions $a_i, b_i, c_i$ can  be expressed in terms of $ \tau $ and $\max_i \max\{||a_i||_{C^0}, ||b_i||_{C^0}, ||c_i||_{C^0}\}$,
where the $C^0$-norms are taken over the interval  $[t_0, t_0 + \tau]$.
\end{cor}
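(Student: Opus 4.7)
The plan is to deduce this from Corollary \ref{evopcont} by transferring the statement from the ambient space $C^0([-\tau,0],\R^{N+1})$ to the reduced state space $\mathbb{X}$, exploiting that in the systems from \eqref{3linsyst} only the first component $x_0$ carries a history. Writing such a system as $\dot x(t) = A(t)x(t) + B(t)x(t-\tau)$, the matrix $A(t)$ is tridiagonal with entries built from the $a_i, c_i, b_i$, while $B(t)$ has only one nonzero entry, namely $b_N(t)$ in the lower-left corner. In particular the $\max$-norms $||A||_{C^0}, ||B||_{C^0}$ are each bounded by a fixed constant times $M := \max_i \max\{||a_i||_{C^0}, ||b_i||_{C^0}, ||c_i||_{C^0}\}$, and the same is true for any perturbation expressed in the coefficient functions.

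Next I would introduce the two natural bounded linear maps
\[
E: \mathbb{X} \to C^0([-\tau,0],\R^{N+1}), \qquad P: C^0([-\tau,0],\R^{N+1}) \to \mathbb{X},
\]
where $E$ extends the $\R^N$-part of a state by constants on $[-\tau,0]$ and $P$ keeps the first component as a function and evaluates the remaining components at $0$. Both are of norm $\leq 1$ in the chosen $\max$-norms, and the restriction of $P$ to $C^1([-\tau,0],\R^{N+1})$ takes values continuously in $\mathbb{X}^1$. The key observation is the factorization
\[
U(t_0+\tau, t_0) = P \circ U_{A,B}(t_0+\tau, t_0) \circ E,
\]
which holds because on $[t_0, t_0+\tau]$ the delayed term $B(t)x(t-\tau)$ uses only the first component's history $\varphi(t-t_0-\tau)$; how we extend the other components on $[-\tau,0)$ is immaterial, and the ODE-part for $(x_1,\ldots,x_N)$ starts at the correct values at $t_0$.

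With this factorization, the two assertions follow directly from Corollary \ref{evopcont}. Continuity of $U(t_0+\tau, t_0): \mathbb{X} \to \mathbb{X}^1$ is inherited from $U_{A,B}(t_0+\tau, t_0) \in L_c(C^0, C^1)$ by composition with $E$ and $P$. For the Lipschitz statement I compare two systems with coefficients $(a_i, b_i, c_i)$ and $(\tilde a_i, \tilde b_i, \tilde c_i)$: the resulting perturbations $\tilde A - A$ and $\tilde B - B$ have $C^0$-norms controlled linearly by $\max_i \max\{||\tilde a_i - a_i||_{C^0}, ||\tilde b_i - b_i||_{C^0}, ||\tilde c_i - c_i||_{C^0}\}$. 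Applying the second assertion of Corollary \ref{evopcont} to $U_{A,B}(t_0+\tau,t_0)$ in $L_c(C^0, C^1)$ and sandwiching between $E$ and $P$ produces a local Lipschitz constant expressible only through $\tau$ and $M$, as required. The only real (and minor) obstacle is Step 2, namely justifying the factorization identity, but this is immediate from the special shape of $B(t)$.
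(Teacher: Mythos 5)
Your proof is correct, and it supplies precisely the details that the paper compresses into the phrase ``immediate consequence of Corollary \ref{evopcont}.'' The factorization $U(t_0+\tau, t_0) = P \circ U_{A,B}(t_0+\tau, t_0) \circ E$ through the extension map $E$ (extend the $\R^N$-part by constants) and the projection $P$ (keep the first component, evaluate the rest at $0$) is exactly the natural way to transfer the statement from the ambient space $C^0([-\tau,0],\R^{N+1})$ to $\mathbb{X}$. The crucial observation that only the history of $x_0$ enters the right-hand side of the system (because $B(t)$ has its single nonzero entry $b_N(t)$ in column $0$) is what makes the factorization independent of the chosen extension, and you state it correctly. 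The norm bookkeeping is sound: $A(t)$ is tridiagonal, so $\|A\|_{C^0}\leq 3M$ and $\|B\|_{C^0}\leq M$ with $M$ as you define it; both $E$ and $P$ have operator norm at most $1$ in the respective max-norms (and $P$ maps $C^1$ into $\mathbb{X}^1$ with norm at most $1$ as well, up to the harmless equivalence between the sum-type $\|\cdot\|_{C^1}$ and the max-type $\|\cdot\|_{\mathbb{X}^1}$). Sandwiching the estimates of Corollary \ref{evopcont} between $E$ and $P$ then gives both the continuity $U(t_0+\tau,t_0)\in L_c(\mathbb{X},\mathbb{X}^1)$ and the local Lipschitz dependence with the stated form of the constant. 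So this is not a genuinely different route; it is the paper's intended argument, written out.
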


\subsection{Attractor of the semiflow}
We assume  in addition that there exists a radius $R >0$
such that for every initial state $\psi = [\varphi, x_1(0),  \ldots ,x_N(0)]\in \mathbb{X}$ and the corresponding
solution $x^{\psi}_t$  of \eqref{standardfb} there is a time $t(R, \psi)  \geq 0 $  with
\begin{equation}
 || x^{\psi}_t ||_{C^0} \leq R  \text{ for all  }  t \geq t(R, \psi). \label{evbounded}
\end{equation}

This is the case, for example, if  all the $f^i$ in \eqref{standardfb} are of the form
$f^i(u,  x_i, v) = - \mu_i x_i + \tilde f^i(u, x_i, v)   $ (where the argument $u$ is absent in case $i = 0$),
with  $ \mu_i > 0$ and the  functions $\tilde{f^i}$ satisfy $|\tilde{f^i}| \leq M $ for  some  $M > 0$.
Then $ x_i(t)  > (M+ 1)/\mu_i  $ (or $ < (-M- 1)/\mu_i)$)
 implies $\dot x_i(t) <  - 1$ (or $\dot x_i(t) > 1 $), so that  one sees that $ x_i(t) \in [(-M-1)/\mu_i,
(M+ 1)/\mu_i] $ for  all sufficiently large $t$, which implies condition \eqref{evbounded}.
This type of behavior  is  called \textit{point dissipative} in \cite{Hale1988} (Section 3.4, p. 38).
It  follows from the  Arzel\`a-Ascoli theorem that the time-$\tau$-map for the semiflow of system
 \eqref{standardfb} (where $ \tau$ is the delay)  maps bounded sets into compact sets, and together with
property \eqref{evbounded} this implies that the semiflow has a connected, compact global attractor that can be described as
 \[\mathcal{A} := \bigcap_{t \geq 0} \overline{\Phi(t, B)}, \]
where $B \subset \mathbb{X}$ is  the closed ball of radius $R$. (See Theorem 3.4.7, p. 40 and
Theorem 3.4.2, p. 39 in \cite{Hale1988}.)
System \eqref{standardfb} has the  backward uniqueness property:
\begin{equation}  t \geq 0, \psi_1, \psi_2 \in \mathbb{X}, \; \Phi(t, \psi_1) = \Phi(t, \psi_2) \Longrightarrow \psi_1 = \psi_2
\label{backunique}
\end{equation}
This is a consequence of the strict monotonicity conditions in \eqref{monotone}, and proved in \cite{MPS2}, p. 450.
We shall need the following simple  auxiliary result concerning  two-sided flows `contained' in a semiflow.

\begin{prop}  \label{backwflow} Let $X$ be a compact metric space and  $F:[0, \infty) \times X \to X$ a continuous semiflow on $X$.
Assume that all time-$t$-maps $F(t, \cdot) $ are injective. Let $ M\subset  X$ be  a forward invariant set
with the  property that every $ \psi$ in $M$ has a complete  flow line within $M$ in the following  sense:
\begin{equation}\forall \psi \in M \, \exists y^{\psi}: \R \to M: \;   y^{\psi}(0) = \psi, \; \forall t, s \in \R, \, t \geq s: \; y^{\psi}(t) = F(t-s, y^{\psi}(s)).  \label{backw1} \end{equation}
Then the map $F_M: \R \times M \to M,\; F_M(t, \psi) := y^{\psi}(t) $ is  a continuous flow on $M$.
\end{prop}
\begin{proof} The  injectivity condition implies that the  function $y^{\psi}$ is unique for each $ \psi \in M$.
Continuity of $F_M$ on $[0, \infty) \times M$ is clear, since  there $F_M = F$.
 Continuity of $F$ w.r. to the first argument and property \eqref{backw1} imply that all the functions
$y^{\psi}$  are continuous on all of $ \R$. Assume now  $(t, \psi) \in (-\infty, 0] \times M$.
Take $T > |t|$. Then $ F(T, \cdot)$ defines  a homeomorphism  of the  compact space $X$ onto its image
$F(T, X) \subset X$, which set contains $M$, in view of condition \eqref{backw1}.
For $ \tilde t $ in the interval $  (-T, 0]$ (which contains $t$) and  $ \tilde  \psi \in M$ we have
$\tilde t + T \geq 0$ and
\[F_M(\tilde t, \tilde \psi)  =  F[\tilde t -(-T),  F(T, \cdot)^{-1}(\tilde \psi) ] =
    F[\tilde t+T,  F(T, \cdot)^{-1}(\tilde \psi)], \]
which shows continuity of $F_M$ at  $(t, \psi)$.  To show that $F_M$ has the flow property, consider now
$ \psi \in M, t, s \in \R$: If  one of the two numbers, say $t$, is in $[0, \infty)$ then
\[F_M(t, F_M(s, \psi)) = F(t, y^{\psi}(s)) \] and \eqref{backw1} gives
\[F_M(t+s, \psi) = y^{\psi}(s+t) = F(t,  y^{\psi}(s)),\] so that
\begin{equation} F_M(t+s, \psi) = F_M(t, F_M(s, \psi)) . \label{flowM} \end{equation}
If both $t,s$ are in $(-\infty, 0]$ then
\[F[|t+s|, F_M(t+s, \psi)]  = F(|t+s|, y^{\psi}(t+s)) =  y^{\psi}(0) = \psi, \]   and (from the definition of $F_M$):\;
$F_M(t, F_M(s, \psi)) =  y^{y^{\psi}(s)}(t)$, so using $|t + s| = |t| + |s|$ we get
\begin{align*} F[|t+s|,F_M(t, F_M(s, \psi))] & = F[|t+s|,   y^{y^{\psi}(s)}(t)] =
F[|s|, F(|t|,  y^{y^{\psi}(s)}(t))] \\ & =  F[|s|, y^{\psi}(s)] = \psi. \end{align*}
Now injectivity of $F(|t+s|, \cdot)$ gives that again \eqref{flowM} holds.
\end{proof}
For the compact attractor $\mathcal{A} $ of the semiflow $\Phi$ from above, we obtain the following result. It is  known
(compare \cite{MPS2}, pages 449-450), and also \cite{Raugel}, Lemma 2.4, p. 892),   but we include the proof for completeness (reference \cite{Raugel} contains none).
\begin{prop}\label{attrflow}
The attractor $ \mathcal{A}  $ is forward invariant under the semiflow $\Phi$, and
on this attractor $ \Phi$ extends in a unique way to a continuous flow
$\Phi_{\mathcal{A}}: \R \times \mathcal{A} \to \mathcal{A}. $
\end{prop}
\begin{proof}   Continuity of $\Phi(s, \cdot)$ implies
$\Phi(s,  \overline{\Phi(t, B)}) \subset\overline{\Phi(s, \Phi(t, B))} =  \overline{\Phi(s+t, B)}$
for  $ s, t \geq 0$, which shows that  $\mathcal{A}$ is forward invariant:
\[\forall s \geq 0: \;  \Phi(s, \cdot)(\mathcal{A}) = \Phi(s, \bigcap_{t \geq 0}   \overline{\Phi(t, B)}) \subset
\bigcap_{t \geq 0}  \Phi(s, \overline{\Phi(t, B)}) \subset \bigcap_{t \geq 0}  \overline{\Phi(s+t, B)}
\subset\mathcal{A}. \]
Actually we even  have
\begin{equation}   \Phi(s,\mathcal{A}) =  \mathcal{A} \text { for all }  s \geq 0.  \label{invattr} \end{equation}
(this property is called invariance in \cite{Raugel}, Definition 2.3, p. 891).

Proof  of (\ref{invattr}): 1. With the delay time $ \tau$,  we first prove
\begin{equation}    \mathcal{A} \subset \Phi(\tau, \mathcal{A}). \label{tauincl} \end{equation}
If $ \chi \in  \mathcal{A}$ then $\chi \in \overline{ \Phi(2\tau, B)} =
\overline{ \Phi(\tau, \Phi(\tau, B))}$. Hence there exists a sequence
$  (\psi_n) \subset  \Phi(\tau, B) $ with $\Phi( \tau, \psi_n ) \to \chi. $
Since $ \overline{ \Phi(\tau, B)} $ is compact, we can assume that $\psi_n \to \psi^* \in \overline{ \Phi(\tau, B)}$.
Continuity of $ \Phi$ implies that $ \chi = \Phi(\tau, \psi^*) \in  \Phi(\tau, \overline{ \Phi(\tau, B)})
\subset \Phi(\tau, \mathcal{A})$.

2. Inductively, it follows from \eqref{tauincl} that $ \mathcal{A} \subset \Phi(n\tau, \mathcal{A}) $ for $ n \in \N$.
 Using  \eqref{tauincl} and the already proved forward invariance of  $\mathcal{A}$, we obtain
for  $ s \geq 0$ and $  n \in \N$ such that $  n\tau \geq s$:
\[   \Phi(s,\mathcal{A}) \subset  \mathcal{A} \subset \Phi(n\tau, \mathcal{A}) \subset
\Phi(s, \Phi(n\tau - s,  \mathcal{A}) \subset \Phi(s, \mathcal{A}), \]
which proves \eqref{invattr}.

\medskip From  \eqref{invattr} in combination with backward uniqueness, one sees that the semiflow  $ \Phi$ restricted to $[0, \infty) \times  \mathcal{A}$ satisfies the conditions of Proposition \ref{backwflow}
with $X := M := \mathcal{A}$. The conclusion of the present proposition follows.
\end{proof}

\begin{cor}\label{attrsols}  For every   $ \psi = [\varphi,  x_1(0),   \ldots , x_N(0)]$ in the attractor $\mathcal{A}$
there exists a unique solution $x^{\psi}: \R \to \R^{N+1}$ of equation  \eqref{standardfb} with
$x^{\psi}_0 = \psi$. This solution has all states $x^{\psi}_t \;(t \in \R) $  in the attractor,
 and for $n \in \N$, the map
\begin{equation*} 
(\mathcal{A}, || \;  ||_{C^0}) \ni \psi \mapsto  x^{\psi}\resr{[-n\tau, n\tau]} \in (C^1([-n\tau, n\tau], \R^{N+1}), ||\; ||_{C^1})
\end{equation*}
is continuous.
\end{cor}
\begin{proof}
The extension  $\Phi_{\mathcal{A}} $  of the semiflow on the attractor from Proposition \ref{attrflow} provides the solutions $ x^{\psi} $ with states in the attractor.
 The differential delay   equation \eqref{standardfb}  and the continuity of $ \Phi$ (w.r. to $ ||\; ||_{C^0}$) imply that, in particular,  the map
\[(\mathcal{A}, || \;  ||_{C^0}) \ni \chi \mapsto x^{\chi}\resr{[0, T]} \in (C^1([0, T], \R^{N+1}),  ||\; ||_{C^1})\]
is continuous for every $T >0$.
Using Proposition \ref{attrflow} we conclude that the concatenation of maps
\[ (\mathcal{A}, || \;  ||_{C^0}) \ni   \psi \mapsto  \underbrace{\chi :=  \Phi_{\mathcal{A}}(-n\tau, \psi)}_{ \in (\mathcal{A}, || \;  ||_{C^0})}
  \mapsto  x^{\chi}\resr{[0, 2n\tau]} \in  C^1([0, T], \R^{N+1}),  ||\; ||_{C^1}) \]
is continuous, and for $ \psi $ and $\chi$ as above we have
$x^{\psi}(t) = x^{\chi}(t + n\tau) $ for $ t \in [-n\tau, n\tau]$, so the  result follows.
\end{proof}

The attractor can be identified with all bounded solutions which are defined on all of $ \R$, as it was done for the case of scalar equations in \cite{MP88}; see also \cite{Raugel}, Lemma 2.18(d),  p. 899 for a general statement  of this kind, without proof.

\subsection{Discrete Lyapunov functional and eigenspace structure}
 Systems   satisfying  the feedback conditions  from p. 389 of \cite{MPS1}
 are called
\textit{signed cyclic feedback systems} (linear or nonlinear) in \cite{MPS2}. We conclude that  the results of  \cite{MPS1}, in particular concerning the  discrete  Lyapunov functional,  apply to the nonlinear system  \eqref{standardfb} as well as to all  three mentioned linear systems from
\eqref{3linsyst}  (compare the remarks on p. 451 of \cite{MPS2}).
Cases 1) and 3) from \eqref{3linsyst}  will be important for us.

We recall  the discrete (`zero-counting')  Lyapunov functional $V^-: \mathbb{X}\setminus\{0\} \to \{1,3,5,  \ldots , \infty\}$
 (for  the case of negative feedback) from \cite{MPS1}, p. 394: A nonzero  element $\psi = (\varphi, x_1,   \ldots ,x_N) $ of $\mathbb{X}$ can be regarded as a function $x $ defined on $[-\tau,0]\cup\{1,  \ldots , N\}$ by setting $x(\theta) := \varphi(\theta)$ for $\theta \in [-\tau, 0]$ and $x(i) := x_i, i = 1, \dots, N$. Then define the `number of sign changes' of $\psi$
\begin{align*}
\text{ sc}(\psi) := \sup\Big\{k  \in \N_0 & \meig  \exists\;  \theta_0 <   \ldots   <  \theta_k \in [-\tau,0] \cup\{1,  \ldots ,N\}: \\
 & \; x(\theta_{i-1})\cdot x(\theta_i) < 0, i = 1,  \ldots , k\Big\} \nonumber
 \end{align*}
which can be infinity, and is to be read as zero if the set after the $\sup$  is   empty, i.e., $\psi$ has no sign change. The Lyapunov functional is then defined
by
\begin{equation} V^-(\psi) := \left\{
\begin{aligned} \text{ sc}(\psi) \; & \text{ if }  \text{ sc}(\psi)  \text{ is odd or infinite,}\\   \text{ sc}(\psi) + 1 \; &  \text{ if }  \text{ sc}(\psi) \text{ is even }
\end{aligned} \right.
\label{Vdef}
\end{equation}
(see formula (2.6), p. 394 in \cite{MPS1}).
We use the notation $V$  for  $V^-$ from now on, since we only consider the negative feedback case ($\delta^* = -1$ in \eqref{monotone}).  It follows essentially from the definition that
$V$ is lower semicontinuous with respect to the norm defined above on
$\mathbb{X}$, since every sign change  of  $x$ with some $\theta_i$  as in the
definition  of  $\text{sc}$ forces a corresponding sign change for nearby
$\tilde x$ (see \cite{MPS1}, top of p. 395).

Theorem 2.1 from \cite{MPS1}, p. 395  gives the  important result that $V$ is actually a Lyapunov functional for the flow  $ \Phi$ induced by
\eqref{standardfb}, but as well for evolution of the  three types of linear
equations in \eqref{3linsyst}:
Assume $ \psi \in \mathbb{X} \setminus\{0\},\;  t \geq t_0 \geq 0$, and that
$x^{\psi}: [t_0 - \tau, t] \to \R^{N+1}$ is  the  solution  of \eqref{standardfb} or   of one of the  three equations  from \eqref{3linsyst} with initial state $\psi $ at time $t_0$.  Then, for $ t \geq t_0$,   backward uniqueness of
the zero solution (see \eqref{backunique}) implies  $ x^{\psi}_t \neq 0 $, so $ V(x^{\psi}_t)  $ is defined, and
\begin{equation*}  t \geq t_0\;   \Longrightarrow \;  V(x^{\psi}_t)   \leq V(x^{\psi}_{t_0}).   
\end{equation*}

We will be interested in the set
\[\Sigma := \menge{ \psi \in \mathbb{X}\setminus\{0\}}{ V(\psi) = 1}, \]
which  in the present work takes  the part analogous to the set $S$ of nonzero functions with at most one sign change  on  $[-\tau,0]$ from  \cite{Wal91}, p. 81.  Accordingly, solutions $x$ of system  \eqref{standardfb} with their segments $x_t$ in $\Sigma$
(i.e., $V(x_t) = 1$)  correspond to the `slowly oscillating' solutions
of the one-dimensional equation considered in  \cite{Wal91}.
Analogous to formula (6.1) of \cite{Wal91}, p. 81,  we have for  the closure of the set $ \Sigma$ in $\mathbb{X}$:

\begin{prop} $ \overline{\Sigma}  = \Sigma \cup\{0\}.  $ \label{Sigmaclos}
\end{prop}
\begin{proof} It is clear that $ 0 \in  \overline{\Sigma}$, since $  \Sigma \ni  \frac{1}{n} \psi \to 0 \; ( n \to \infty) $ if
$\psi \in \Sigma$.   Conversely, if $ 0 \neq \hat \psi \in  \overline{\Sigma}$ then there exists
a sequence $(\psi_n) \subset \Sigma, \, \psi_n \to \hat \psi$, and lower semicontinuity of $V$ gives
$  1 \leq V( \hat \psi)  \leq \liminf_{n \to \infty} \underbrace{V(\psi_n)}_{=1} = 1,$
so also $\hat \psi \in \Sigma$.
\end{proof}

The set $\Sigma $ is not open in $\mathbb{X}$, but contains subsets which in some sense still
 allow for small perturbation without leaving $\Sigma$.

An important argument from  \cite{Wal91}  (Proposition 6.1,  p. 82) uses the fact that solutions starting in $S$ will have segments without zeroes (which are thus contained in an open subset of $S$, allowing  small perturbation).  In our context this argument can be replaced by using the fact that a solution $  x^{\psi}$  of equation \eqref{standardfb} or of  one of the linear systems  from \eqref{3linsyst}  starting at time $t_0$ with state $ \varphi $ has many so-called `stable times' $t_1 > t_0$, $t_1   \in \text{ Stab}(x^{\varphi}) $ (see \cite{MPS1}, p. 414). (We explain this notion in detail in (2.23) below.)

 Note that the strict feedback condition  for $f^0 $ of Proposition 4.9 in \cite{MPS1} is satisfied in our case, due to the first condition in \eqref{monotone}.
Of course, these  times have to be understood relative to the notion of `solution', i.e., relative to  the nonlinear system or one of the three linear systems  mentioned above.  These stable times
have  the   property that  for all solutions starting at time $t_0$ with
 states $\tilde \psi$ sufficiently close to $\psi$, one also has
\begin{equation}
V(x^{\tilde \psi}_{t_1})  = V(x^{\psi}_{t_1})  \; (= 1, \text{ if } \psi \in \Sigma). \label{stabtime}
\end{equation}
(See \cite{MPS1}, Proposition 4.9, p. 415.)

The proposition below is an easily obtained  consequence of the existence of stable times and will  be useful for us.  The main point is that a kind  of  stable time can be chosen locally uniformly with respect to initial states.

 Let $E(t)$  denote  either the operators $S(t)$ of the semigroup generated by  \eqref{constcoeff} for  $ t \geq 0$,  or the time-$t$-maps $  \Phi(t, 0) $ for the semiflow generated by a system of type \eqref{standardfb} (of course, with the feedback conditions \eqref{monotone}), or $E(t) = U(t, t_0)$, with the evolution operators $ U(t, s)$  for a  non-autonomous system  of type \eqref{nonaut1} to  \eqref{nonaut3}, with a fixed starting time $t_0 $ and $ t \geq t_0$.
All these three types of systems have the form \eqref{constcoeff}, but with possibly time-dependent coefficients.
(An analogous result would also hold for nonlinear systems, but we will  not need that.)

\begin{prop} \label{compactset}  Assume $J \in \N$ is odd, and  let  $K \subset \mathbb{X}\setminus\{0\}$ be a compact subset on
which $V \leq J $. Then there exist
a neighborhood of $ K $  in $(\mathbb{X}, ||\; ||_{C^0})$ and $T_K > t_0 $  such that
\[\forall \psi \in N: \;  V[E(T_K)\psi] \leq J. \]
\end{prop}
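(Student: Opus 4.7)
The strategy is to combine the local stable-time property (Proposition 4.9 of \cite{MPS1}, quoted via \eqref{stabtime}) with compactness of $K$ and the monotonicity \eqref{decreas} of $V$ along trajectories to obtain a single time $T_K$ that works uniformly on a neighborhood of $K$.

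First, since $K$ is compact in $\mathbb{X}\setminus\{0\}$, we have $\varepsilon_0 := \dist(K,0) > 0$, so a neighborhood of $K$ of radius less than $\varepsilon_0/2$ in $(\mathbb{X},||\,||_{C^0})$ automatically avoids $0$, ensuring $V$ is defined there and that backward uniqueness \eqref{backunique} (respectively, injectivity of the linear evolution operators, which likewise excludes the zero solution arising from a nonzero initial state) keeps all forward states nonzero. Next, for each $\psi\in K$, the feedback condition for $f^0$ (which holds by the first inequality in \eqref{monotone}) allows us to apply the stable-time result quoted in \eqref{stabtime}: there exist $t_1(\psi)>t_0$ and an $||\,||_{C^0}$-neighborhood $U_\psi$ of $\psi$ such that
\[
V[E(t_1(\psi))\tilde\psi]=V[E(t_1(\psi))\psi]\quad\text{for every }\tilde\psi\in U_\psi.
\]
By the Lyapunov property \eqref{decreas} applied to the solution starting from $\psi$, we have $V[E(t_1(\psi))\psi]\le V(\psi)\le J$, hence $V[E(t_1(\psi))\tilde\psi]\le J$ for every $\tilde\psi\in U_\psi$.

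Now the family $\{U_\psi\}_{\psi\in K}$ covers $K$; by compactness choose a finite subcover $U_{\psi_1},\dots,U_{\psi_m}$, shrink each $U_{\psi_i}$ so that it lies in $\{\chi\in\mathbb{X}:||\chi||_{C^0}>\varepsilon_0/2\}$, and set
\[
N:=\bigcup_{i=1}^m U_{\psi_i},\qquad T_K:=\max_{1\le i\le m} t_1(\psi_i).
\]
Given any $\tilde\psi\in N$, pick $i$ with $\tilde\psi\in U_{\psi_i}$; then $V[E(t_1(\psi_i))\tilde\psi]\le J$. Since $E(\cdot)\tilde\psi$ is the state of a solution on the interval $[t_0,T_K]$ of one of the three admissible types, and this state is nonzero throughout by backward uniqueness, a second application of \eqref{decreas} between the times $t_1(\psi_i)$ and $T_K\ge t_1(\psi_i)$ (using the semigroup/evolution identity $E(T_K)=E(T_K,t_1(\psi_i))\circ E(t_1(\psi_i))$ to interpret the later state as evolved from the earlier one) yields
\[
V[E(T_K)\tilde\psi]\le V[E(t_1(\psi_i))\tilde\psi]\le J,
\]
which is the conclusion.

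The only delicate point is verifying that the stable-time construction of \cite{MPS1} is available in each of the three settings for $E(t)$; this is already argued in the paragraph preceding the proposition, so no separate work is needed. Everything else is a routine compactness-plus-monotonicity argument, and I do not anticipate any essential obstacle.
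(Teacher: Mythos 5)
Your proof is correct and follows essentially the same route as the paper's: fix a stable time and a neighborhood for each point of $K$ using the quoted result from \cite{MPS1}, extract a finite subcover, take the maximum of the stable times, and close the argument with the monotonicity \eqref{decreas} of $V$. The additional care you take to keep the neighborhood away from $0$ is a reasonable bit of bookkeeping, but not a substantive difference from the argument in the text.
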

\begin{proof} In  the proof we consider the case of time-dependent coefficients, so that
$E(t) = U(t, t_0)$ For every $\varphi \in K $ there exists a stable  time
$t_{1, \varphi} >t_0$ (with respect to the evolution given by the maps $U(t, t_0$))  and an open  neighborhood $N_{\varphi} $ of $ \varphi $  in $\mathbb{X}$ such that, as in \eqref{stabtime},
\[\forall \psi \in  N_{\varphi}: V(U(t_{1, \varphi}, t_0) \psi) =
V(U(t_{1, \varphi}, t_0) \varphi) \leq V(\varphi) \in V(K), \text{ so }  V(U(t_{1, \varphi}, t_0)\psi) \leq  J. \]
Since $K $  is compact,  it is covered by finitely many such  neighborhoods
$N_{\varphi_1},   \ldots , N_{\varphi_k}$. Setting
$N  := \bigcup_{j = 1}^k N_{\varphi_j} $  and
$ T_K := \max_{j = 1,  \ldots ,k}  t_{1, \varphi_j}$,  we obtain for $\psi \in N$
(so  $\psi \in  N_{\varphi_j}$ for some $j$), using that $V(E(t, \psi))$ is non-increasing with $t$:
\[ V(E(T_K)\psi) = V[U(T_K, t_0) \psi] \leq V[U( t_{1, \varphi_j}, t_0)\psi]  \leq J. \]
\end{proof}

Next we quote a combination of results from Theorem 3.2 and
Corollary 3.3  on p. 404 of \cite{MPS1}, specialized to the case of negative feedback (so $V = V^-$). (Part d) contains a very minor extension of the corresponding result from \cite{MPS1}.) By eigenvalues of system \eqref{constcoeff} we mean  the zeroes of the associated characteristic function, compare  formula (3.15), p. 402 in \cite{MPS1}.

\begin{prop} \label{Cor3.3} \begin{enumerate}
\item[a)]  The real parts of the eigenvalues  (of the constant coefficient system
\eqref{constcoeff}, and taking into account multiplicity)  can be ordered as \[\sigma_0 \geq \sigma_1 > \sigma_2 \geq \sigma_3 > \sigma_4 \geq \,   \ldots \]

\item[b)] Define the spaces \[ \tilde{\mathcal G}_{\sigma} := \bigoplus_
{\lambda \mathrm{ \; eigenvalue, \;  } \Real(\lambda) = \sigma} {\mathcal G}_{\lambda},\]  where $ \mathcal{G}_{\lambda} $  denotes   the generalized eigenspace associated to the eigenvalue $ \lambda$.
(We can  think of the complex eigenspaces or their real parts, and then statements about dimension
are to be read accordingly as complex or  real dimension.)
Then $V$ is constant on each `punctured' space   $\tilde{\mathcal G}_{\sigma}\setminus \{0\}$,
 and for every odd $J \in \N$
one has
\[\sum_{\sigma, V = J \mathrm{\;  on \; } \tilde{\mathcal G}_{\sigma}\setminus \{0\} }  \dim  \tilde{\mathcal G}_{\sigma} = 2. \]

\item[c)] The values of $V$ on the spaces from b) (in the real interpretation) are
 \begin{equation} V = \left\{ \begin{aligned}  j + 1 & \text{ on }  \tilde{\mathcal{G}}_{\sigma_j} \text{ if }  j \text{ is even};\\
 j  & \text{ on }  \tilde{\mathcal{G}}_{\sigma_j} \text{ if }  j \text{ is odd}.
\end{aligned} \right.  \label{Vvalues} \end{equation}

\item[d)] Consider a solution $x:[-\tau, \infty) \to \R^n$ of  \eqref{constcoeff},
and assume that there exists $\sigma \in \{\sigma_0, \sigma_1,   \ldots \} $
such that the spectral projection  of $x_0$ to the (spectral) subspace
$  \tilde{\mathcal{G}}_{\sigma} $ of $\mathbb{X}$ is nonzero, and let
$\sigma^{\max}$ be maximal among the $\sigma$ with this property.
Then \[\lim_{t\to  \infty}  V(x_t) \text{ equals the (constant) value of }
 V \text{ on } \tilde{\mathcal{G}}_{\sigma^{\max}}\setminus\{0\}, \]
which  is determined according to  {\textrm{c})}.
\end{enumerate}
\end{prop}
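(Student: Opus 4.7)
Parts (a), (b) and (c) are Theorem 3.2 and Corollary 3.3 of \cite{MPS1} specialized to $V=V^-$, and I take them as given. For (d), write $J_{\max}$ for the constant value of $V$ on $\tilde{\mathcal{G}}_{\sigma_{\max}}\setminus\{0\}$; by (b)--(c) this is a positive odd integer. The Lyapunov property \eqref{decreas} together with the discrete range $\{1,3,5,\ldots\}$ of $V$ forces $V(x_t)$ to be eventually constant, $V(x_t)=J_\infty$ for $t\ge t_1$, so the task reduces to proving $J_\infty=J_{\max}$.

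The plan is to combine the spectral decomposition for the semigroup $\{S(t)\}$ with the stable-time property of $V$. Using the standard spectral splitting associated with the generalized eigenspaces in (b), write $\mathbb{X}=P\oplus Q$ where $P:=\tilde{\mathcal{G}}_{\sigma_{\max}}$ is finite-dimensional and invariant, and $Q$ is the closed complement corresponding to all eigenvalues of real part at most some fixed $\sigma^*\in(\sigma_1,\sigma_{\max})$, with $\sigma_1$ the next spectral value below $\sigma_{\max}$. By hypothesis $P_\sigma x_0=0$ for every spectral value $\sigma>\sigma_{\max}$, so $x_0=y_0+z_0$ with $y_0\in P\setminus\{0\}$ and $z_0\in Q$; setting $y_t:=S(t)y_0$, $z_t:=S(t)z_0$, one has $x_t=y_t+z_t$ and the spectral estimate on $Q$ yields $\|z_t\|_{C^0}\le C\,e^{\sigma^* t}$.

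The key subsidiary step is to produce a sequence $t_n\to\infty$ along which $\|y_{t_n}\|_{C^0}\ge c\,e^{\sigma_{\max}t_n}$ for some $c>0$. Writing $w_t:=e^{-\sigma_{\max}t}y_t$, the curve $w$ solves a linear ODE on the finite-dimensional space $P$ whose generator has purely imaginary spectrum, so $w_t$ is a quasi-polynomial; if the restricted generator is diagonalizable then $t\mapsto\|w_t\|^{2}$ is an almost-periodic function with positive mean $\sum_j\|v_j\|^{2}$, while in the presence of Jordan blocks $\|w_t\|$ even grows polynomially in $t$. Either way $\limsup_{t\to\infty}\|w_t\|>0$ whenever $w_0=y_0\ne 0$, which delivers the required sequence. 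I expect this to be the main obstacle, because when Jordan blocks coexist with nontrivial imaginary eigenvalue pairs the function $\|w_t\|$ can dip to zero at isolated times, and the positive lower bound must be extracted from the quasi-polynomial structure rather than from any pointwise estimate.

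With such a sequence in hand, normalize $\hat x_{t_n}:=x_{t_n}/\|y_{t_n}\|_{C^0}=y_{t_n}/\|y_{t_n}\|_{C^0}+z_{t_n}/\|y_{t_n}\|_{C^0}$. The second summand tends to $0$ in $\mathbb{X}$ because $\sigma^*<\sigma_{\max}$, and the first has unit norm in the finite-dimensional space $P$, so after passing to a subsequence $\hat x_{t_n}\to\hat y_*\in\tilde{\mathcal{G}}_{\sigma_{\max}}\setminus\{0\}$. Since $\tilde{\mathcal{G}}_{\sigma_{\max}}$ is $S(t)$-invariant and (b)--(c) give $V(S(t)\hat y_*)=J_{\max}$ for every $t\ge 0$, the stable-time property \eqref{stabtime} supplies a time $T>0$ and a neighbourhood $U$ of $\hat y_*$ with $V(S(T)\psi)=J_{\max}$ for every $\psi\in U$. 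For all sufficiently large $n$, $\hat x_{t_n}\in U$, and by linearity of $S(T)$ together with the obvious scale invariance of $V$,
\[
V(x_{t_n+T})=V\bigl(S(T)\hat x_{t_n}\bigr)=J_{\max}.
\]
Combined with $V(x_t)\equiv J_\infty$ for large $t$, this forces $J_\infty=J_{\max}$, completing the proof.
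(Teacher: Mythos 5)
Your proposal is correct and follows essentially the same spectral-decomposition strategy as the paper: split $x_t$ into the dominant part in $\tilde{\mathcal{G}}_{\sigma_{\max}}$ and a remainder with smaller exponential growth, normalize, and then use the stable-time machinery to force the asymptotic value of $V$. The endgame is organized a little differently -- the paper proves a two-sided squeeze ($V(x_t)\ge J$ for large $t$ via lower semicontinuity and compactness of the unit sphere, and $\liminf_t V(x_t)\le J$ via Proposition \ref{compactset}), whereas you use eventual constancy of $V$ together with a stable time at a single cluster point $\hat y_*$; both variants rest on the same ingredients and yield the same conclusion. One small observation: the "main obstacle" you flag, namely that $\|w_t\|$ might dip to zero at isolated times when Jordan blocks coexist with nontrivial rotations, cannot actually occur here, because part b) forces $\dim\tilde{\mathcal{G}}_{\sigma_{\max}}\le 2$ (real dimension), so the restricted generator is either a one-dimensional scalar, a genuine $2\times 2$ rotation with linearly independent $r,s$ as in \eqref{X2form}, or a real Jordan block -- in each case $\|w_t\|$ is bounded below (or tends to infinity) rather than merely having a positive $\limsup$. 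Your more general quasi-polynomial argument is nevertheless sound, and in fact supplies what the paper only asserts implicitly when it claims that the normalized states converge to the unit sphere of $\tilde{\mathcal{G}}_{\sigma_{\max}}$. You should also make explicit that if initially $V(x_t)=\infty$, monotonicity alone would not give a finite $J_\infty$, and that finiteness is secured precisely by the first stable time you exhibit, so the order of these two remarks matters in a polished write-up.
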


\begin{proof}[\textbf{Comments  concerning  the proof}.]
We  briefly mention important ideas from the proof of  the above result in \cite{MPS1},
  since it is quite essential for the present paper. In particular, we  comment on the precise values of
$V$   (part c) above), the proof of which is not so explicit there.
 Corollary 3.3 from \cite{MPS1} is,  considering the explicit information on the values of     $V^{\pm}$,  actually more a theorem in its own right.
Theorem 3.2  and Corollary 3.3 of \cite{MPS1} are  proved in Section 7 starting on p. 429 of \cite{MPS1}.

\medskip     It is shown  in  \cite{MPS1} that the  quantities  of Theorem 3.2 and Corollary 3.3  remain invariant  under  continuous variation of the constant coefficients  in the linear  system, as long as the sign conditions are preserved. It hence suffices to prove the results  for a special case.
Consider an odd value $ J \in \N$. Define the constants $  \varphi := \frac{\pi}{2(N+1)}$, $\omega := \pi(J- \frac{1}{2})$, $c := \cos(\varphi) $ and $s := \sin(\varphi) $.  Then the functions defined by
$x_j(t) := \cos(\omega t + j\varphi), j = 0,  \ldots ,N$  satisfy the following linear   system in standard (negative) feedback form:
\begin{equation}\left\{\begin{aligned} \dot x_j(t)  &= \omega[-\frac{c}{s} x_j(t) + \frac{1}{s}x_{j+1}(t],\; 0 \leq j \leq N-1, \\
\dot x_N(t)  & = \omega[-\frac{c}{s} x_N(t) - \frac{1}{s}x_0(t-1)].
\end{aligned} \right.   \label{model} \end{equation}
(One main observation in the  verification of \eqref{model} is: Formally defining   \[x_{N+1}(t) := \cos(\omega t +(N+1)\varphi),\]  using the choice of $ \omega $, the facts that $(N+1) \varphi = \pi/2$ and  that $  \cos(u -\pi J) = -\cos(u) $, since $J$ is odd,  one gets that  $-x_0(t-1) = x_{N+1}(t)$, so that  the above equations for $j \in \{0,  \ldots ,N-1\}$
also hold for $j = N$.)
Now for these particular solutions,
$\text{ sc}[x_0\resr{[-1,0]}, x_1(0),   \ldots , x_N(0)] $  equals  the number of sign changes
of $ t \mapsto \cos(\omega t) $ on $ [-1,0]$, since $x_j(0) = \cos(j\varphi) >0, j = 1,  \ldots ,N$.
This sign change number is $J-1$. Thus (compare \eqref{Vdef}) the value of  $V = V^-$  along these  solutions
equals   $(J-1) + 1 = J$, and the dimension of the associated
 linear space of solutions  is two (these solutions  correspond to the eigenvalue pair $\pm  i\omega$ of
system \eqref{model},  compare \cite{MPS1}, p. 437).

\begin{equation}
\begin{aligned} &  \text{\textit{ In particular, the value $J$ is actually assumed by $V$ for this particular system,}}  \\
& \text{\textit{ on a two-dimensional eigenspace (except for  the zero state).} } \label{values}
\end{aligned}
\end{equation}

Now part a) of Theorem 3.2, p. 400 of \cite{MPS1} shows, in particular,  the results listed in italics  under \textbf{1.} and \textbf{2.} below (see also \cite{MPS1}, the section on constant coefficient systems starting on  p. 401):

\medskip \textbf{1.}
\textit{  Consider the spaces  $\tilde{{\mathcal G}}_{\sigma}$,  which are sums of generalized  (real) eigenspaces associated
to eigenvalues with real part $ \sigma$ of system \eqref{constcoeff}. The functional $V$ has a constant  value
$\tilde J(\sigma)$ on $\tilde{{\mathcal G}}_{\sigma}\setminus\{0\}$.
 The direct sum of all spaces  $ \tilde{{\mathcal G}}_{\sigma}$, on which $V$ has a specific value $J$, has exactly dimension two.}

Description of the proof of  \textbf{1.}:
The fact that $V$ is constant on $\tilde{{\mathcal G}}_{\sigma}\setminus\{0\}$   follows from the representation of  solutions $x$ with  states  in $\tilde{{\mathcal G}}_{\sigma}$ as $x(t) = \exp(\sigma t)\cdot q(t) $, with $q$ quasi-periodic For the dimension statements,
see the remark on p. 401 of \cite{MPS1}, in particular, formula (3.12) there.
The fact  the dimension of such a space is at most two is  proved  by showing that in a three-dimensional space  one can construct solutions with a strict  drop in value of $V$ (see  the bottom of p. 431 in \cite{MPS1}). The fact that it actually equals two is proved  in Section 7 of \cite{MPS1}
 by  a homotopy method (in other words,  continuity and hence invariance  of integer quantities w.r. to  variation of the coefficients in  system \eqref{constcoeff}),  considering the  model system
\eqref{model}. It follows then,  of course, that

(i)  all spaces $ \tilde{{\mathcal G}}_{\sigma}$ have at most dimension two
(we call this the dimensional restriction);

(ii) a   direct sum  of different $ \tilde{{\mathcal G}}_{\sigma}$, on which $V$ has the constant odd  value  $J$,   can either  be  only  one two-dimensional space $ \tilde{{\mathcal G}}_{\sigma}$,   or two one-dimensional spaces with distinct $\sigma$.

\medskip \textbf{2.}
\textit{If $x: \R \to \R^{N+1} $  is a solution of a constant coefficient system satisfying condition \eqref{coeffsigns}
(with negative feedback), and $x$   is a finite linear combination
of solutions with nonzero states in some  $\tilde{\mathcal{G}}_{\sigma} $, then
$\lim_{t \to -\infty}\ V(x_t) =  \tilde J( \sigma^{\min}) $ and
$\lim_{t \to +\infty}\ V(x_t) =  \tilde J(\sigma^{\max}) $, where  $ \sigma^{\min}$ and $  \sigma^{\max}$ denote
the minimal and maximal real part from this finite collection, which actually appear in the linear combination representing the solution $x$.}

(See \cite{MPS1}, p. 431 for  the  part of the proof for $ t \to - \infty$; the other part is analogous.)

\bigskip
It is easily seen that  $V = V^-$  takes all odd values on the whole space $ \mathbb{X}\setminus\{0\}$,
simply by choosing initial states with appropriate sign change numbers.
It is not so obvious that $V$ assumes all these  values  even  on the subspaces $\tilde{\mathcal{G}}_{\sigma} $. In  the analogous finite-dimensional situation as considered by
Mallet-Paret and Smith in \cite{MPSmith}, this can be seen in case of even dimension $N = 2b$
directly from the fact that $V$  (in this case, defined by the sign changes between the components in cyclical arrangement)  is clearly bounded above  by $N$ and hence (since $V$ is odd-valued)  by $2b-1$. This  determines
the value of $V = 2h-1 $ on each space in a sequence of $b$ two-dimensional subspaces $\mathcal{G}_{\sigma_{2h-1}} +  \mathcal{G}_{\sigma_{2h}}, \; h = 1,2,  \ldots ,b$ (see formula (2.3) on p. 378 of  \cite{MPSmith}).
 However, even in this  finite dimensional setting,  in the case of  odd dimension $N = 2b+1$
 the determination  of the  precise values of $V$ on the spaces  $\tilde{\mathcal{G}}_{\sigma}$
requires the homotopy method.  These values do not  follow from the dimensional restrictions and the
possible values of $V$ alone.

For the infinite dimensional situation relevant in  the present paper,  the fact that
$V$ assumes prescribed odd  values $J$ on suitable spaces $\tilde{\mathcal{G}}_{\sigma}$ also follows from the homotopy type considerations  in combination with \eqref{values}.
Knowing this, it becomes clear that if the real parts of  the eigenvalues are ordered as
$\sigma_0 \geq \sigma_1 > \sigma_2 \geq \sigma_3 > \sigma_4 \geq \sigma_5\    \ldots $
(the strict inequalities enforced by the dimensional restriction),  then the values of  $V$ must be as described in \eqref{Vvalues}
(compare Corollary 3.3, p. 404 in \cite{MPS1}). This is the only way that $V$ can take all odd values on the spaces $\tilde{\mathcal{G}}_{\sigma_j}$,   nondecreasing with $j$, and  not being constant on a more than two-dimensional sum of these
spaces. So far we gave  explanations concerning the proof of a) to c) in Proposition
\ref{Cor3.3}.

\medskip
The statement in Proposition  \ref{Cor3.3} d)  is a version of the statement for $ t \to \infty$ from  point \textbf{2.} above (compare \cite{MPS1}, formula (3.7) in Theorem 3.1, p. 400),  which allows also an `infinite dimensional part' in  the solution $x$. This part   does not affect the asymptotic value  of $V$.  We indicate the proof: The segments $x_t$ of the solution can be split as
$x_t = (x_t)_{\sigma^{\max}}  + \tilde{(x_t)}$, where
$\tilde{(x_t)}$ satisfies an estimate of the form
\begin{equation}
||\tilde{(x_t)}||_{C^0}  \leq K\cdot \exp(\sigma t), \text{  with }
 \sigma < \sigma^{\max}. \label{Qdecay}
 \end{equation}
This  is the well-known `estimate on the complementary subspace',  see e.g.  \cite{Hal71}, Theorem 22.1, p. 114,
and \cite{HTheory}, Theorem 4.1, p. 181, and \cite{HL}, Theorem 6.1, p. 214. In these  references,
this estimate is  stated  in  a way that does not emphasize the exponential separation. There,  only existence of a  number $ \gamma$  entering the exponent in the estimate is stated (not that it can be chosen small),   and the same exponent $ \beta - \gamma$  (in  \cite{Hal71} and \cite{HTheory}) or
$\beta + \gamma$ (in \cite{HL}), where $\beta $ comes from spectral quantities,   is used for  the estimates on both spaces. The  different growth rates of the semigroup $\{S(t)\}_{t \geq 0}$ (generated by \eqref{constcoeff})    on  spectral subspaces of the generator  with separated real parts  are also  easily obtained from the spectral mapping  theorem for eventually compact semigroups.   See e.g. \cite{EngelNagel}, Corollary 3.12, p. 281.
It follows that the unit vectors $x_t/||x_t||$ in $\mathbb{X}$ converge to  the
(compact) unit sphere  $\mathbf{S}_{\sigma^{\max}}$ in $\tilde{\mathcal{G}}_{\sigma^{\max}}$ as $ t \to \infty$.
Lower semicontinuity of $V$, the fact that $V = J$ on $\mathbf{S}_{\sigma^{\max}}$
(with $J$ given by  \eqref{Vvalues}), and compactness of this sphere imply that there exists a neighborhood  $U$ of  $\mathbf{S}_{\sigma^{\max}}$ in $\mathbb{X}$  with $V \geq J$ on  $U$. It follows that $V(x_t)  = V(x_t/||x_t||_{C^0} ) \geq J $
for  all large enough $t$. On the other hand,  Proposition \ref{compactset}
  shows that there exists a neighborhood $N$ of $\mathbf{S}_{\sigma^{\max}}$ such that for all $ \psi \in N$ one has
$\liminf_{t \to \infty} V[S(t)\psi)] \leq J.$
Altogether we obtain $V(x_t) \to J $ for $ t \to \infty$.
\end{proof}

\subsection{Linearization and local dynamics}
We assumed already  that zero is an equilibrium solution of system \eqref{standardfb}, that the   nonlinearities in \eqref{standardfb} are of class  $C^1$,   and we  consider the linearization at zero. The solutions of the associated characteristic equation are called eigenvalues (they are eigenvalues of the infinitesimal generator of the semigroup on $\mathbb{X}_{\Co}$ which is  induced by the linearized equation.) We make the following assumption, which from now on is always in force:

\medskip \textbf{(A1)} Zero is the only equilibrium of   \eqref{standardfb}, and the  characteristic equation associated to the linearization of  \eqref{standardfb} at zero has a leading  pair  (in the sense of maximal real part)  of complex eigenvalues $ \sigma_0    \pm i\omega$ with  $  \sigma_0, \omega  >0$.

\medskip In the notation  of Proposition \ref{Cor3.3}, assumption \textbf{(A1)} implies that
$\sigma_1 = \sigma_0$, that the space $\tilde{\mathcal{G}}_{\sigma_0}$ is two-dimensional, and that
all eigenvalues except $\sigma_0 \pm  i\omega$ have real parts strictly less than $\sigma_0$. Thus the real parts of the eigenvalues  at zero (compare  part 2 of the comments above)  satisfy
\begin{equation} \label{sigmas}
\sigma_0 = \sigma_1 > \sigma_2 \geq \sigma_3 >   \ldots
\end{equation}
We set $X^{\mathrm{uu}} := \tilde{\mathcal{G}}_{\sigma_0}$ (in the real interpretation) from now on
(the `strongly  unstable' space).
  $X^{\mathrm{uu}}$ then consists of initial states of solutions of the linearized equation which have the
 form
\begin{equation}\R \ni t \mapsto e^{\sigma_0 t}[c_1 (\cos(\omega t) r- \sin(\omega t) s)   + c_2
(\sin (\omega  t) r +  \cos(\omega  t) s)   ], \label{X2form} \end{equation}  where
$r, s \in \R^n$ are linearly independent, and $c_1, c_2 \in \R$.
It follows further from Proposition \ref{Cor3.3} that the space
$X_3 := \tilde{\mathcal G}_{\sigma_2} +  \tilde{\mathcal G}_{\sigma_3}$
(again, in real interpretation) is also two-dimensional (the sum being direct if and only if
$\sigma_2 > \sigma_3$), and that
\begin{equation}V =1 \text{ on } X^{\mathrm{uu}}\setminus\{0\}, \text{ so } X^{\mathrm{uu}}\setminus\{0\} \subset \Sigma,
\text{ and }  V = 3  \text{ on } X_3 \setminus\{0\}. \label{V13values}
\end{equation}

 We have the spectral decomposition
\begin{equation}
\mathbb{X} = X^{\mathrm{uu}}  \oplus X_3 \oplus Q,  \label{specdeco}
\end{equation}
where $Q$  is the spectral subspace of $\mathbb{X}$ corresponding to  all eigenvalues  with real part less or equal to $ \sigma_4$.
(One has $ V \geq 5$ holds on $Q\setminus\{0\}$, but this is not  important for us.)
 We denote by  $\pr  \in L_c( \mathbb{X},  \mathbb{X})$ the spectral projection   which maps $ \mathbb{X}$ onto  the space $X^{\mathrm{uu}}$, defined by the
above decomposition. Analogously, we denote by $\pr_3$ and  $\pr_Q$ the  projections onto $X_3$ and onto $Q$  defined by \eqref{specdeco}.

Note that for the linearization of system \eqref{standardfb}
in case $N >0$, one cannot expect in general an ordering of eigenvalues according to the principle `larger real part implies lower imaginary part', as it is the
case for  $N= 0$; compare the remarks on p. 405, before Section 4  of \cite{MPS1}.
 But this is true for special cases, as  shown e.g. in \cite{BravEtAl}
for   a particular  case of system \eqref{standardfb} (which we will later discuss  as system \eqref{standarduni}), and for the eigenvalues with positive real part.
See the last statement in part (iii) of Lemma 3, p. 4 in  \cite{BravEtAl}.
That is apparently  a consequence of additional restrictions on the coefficients there  ($a_j = 0, c_j < 0$ in the notation from \eqref{constcoeff} of the present work).

\medskip In the lemma below, the statements in a) and b) are analogous  to Remark 6.1, p. 81,  Prop.  6.2, p. 83  in   \cite{Wal91}, and to Lemma 5.1, p. 24 in \cite{Wal95}.

\begin{lem} \label{projonsigma}
\begin{enumerate}
\item[a)]  If $ x^{\psi}$ is a solution of one of the four system types from \eqref{standardfb} or  \eqref{3linsyst}
with $x^{\psi}_{t_0}  = \psi \in \Sigma$ then $x^{\psi}_{t} \in \Sigma $ for $ t \geq t_0$.
\item[b)]  $  \pr(\psi) \neq 0 $ for all $ \psi \in \Sigma$, or, equivalently, $\Sigma \cap (X_3 \oplus Q) = \emptyset$.
\end{enumerate}
\end{lem}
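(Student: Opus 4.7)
For part (a), the plan is short: combine the monotonicity \eqref{decreas} of $V$ along solutions of any of the four admissible system types with the fact that $V$ takes only odd positive values (or $\infty$). Backward uniqueness---\eqref{backunique} in the nonlinear case, or the linear backward uniqueness at zero noted just before \eqref{decreas} for the three linear systems---ensures $x^\psi_t \neq 0$ for $t \ge t_0$, so that $V(x^\psi_t)$ is defined. Then $V(x^\psi_t) \le V(\psi) = 1$ together with the odd-valued range of $V$ forces $V(x^\psi_t) = 1$, i.e.\ $x^\psi_t \in \Sigma$.

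For part (b), the equivalence of the two formulations is immediate from \eqref{specdeco}: $\pr(\psi)=0$ iff $\psi \in X_3 \oplus Q$. I would argue by contradiction. Assume $\psi \in \Sigma$ with $\pr(\psi)=0$, so $\psi \in (X_3 \oplus Q) \setminus \{0\}$. Consider the solution $x^\psi$ of the linearization of \eqref{standardfb} at zero, which is a constant coefficient system of type \eqref{constcoeff}, and hence one of the three linear systems of \eqref{3linsyst}. Part (a), now applied to this linear system, yields $V(x^\psi_t) = 1$ for all $t \ge 0$.

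On the other hand, since $\psi \neq 0$ and $\pr(\psi)=0$, there exists a largest index $j \ge 2$ for which the spectral projection of $\psi$ onto $\tilde{\mathcal G}_{\sigma_j}$ is nonzero. Proposition \ref{Cor3.3}(d) then asserts that $V(x^\psi_t)$ converges as $t \to \infty$ to the constant value of $V$ on $\tilde{\mathcal G}_{\sigma_j}\setminus\{0\}$, which by \eqref{Vvalues} equals $j$ (if $j$ is odd) or $j+1$ (if $j$ is even); in either case this value is $\ge 3$ because $j \ge 2$. This contradicts $V(x^\psi_t) = 1$ and closes the argument. The only conceptual point requiring care is identifying Proposition \ref{Cor3.3}(d) as the right tool; once it is in hand, the argument is essentially immediate, and no further estimates are needed.
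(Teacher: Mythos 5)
Part (a) is correct and matches the paper's argument exactly: backward uniqueness gives $x^\psi_t \neq 0$, and monotonicity of $V$ together with $V \geq 1$ forces $V(x^\psi_t) = 1$.

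Part (b) takes a different route from the paper, and as written it has a gap. After reducing to the linearized constant-coefficient system and noting (via part (a)) that $V(x^\psi_t) = 1$ for all $t \geq 0$, you invoke Proposition \ref{Cor3.3}(d). But that proposition carries the hypothesis that there exist $\sigma \in \{\sigma_0, \sigma_1, \ldots\}$ with nonzero spectral projection of $\psi$ onto $\tilde{\mathcal G}_{\sigma}$, and you have not shown this holds for your $\psi$. The generalized eigenspaces of the linearized delay equation span only a dense subspace of $\mathbb{X}$; a nonzero $\psi \in X_3 \oplus Q$ could, a priori, have every spectral projection equal to zero (the ``small solutions'' phenomenon for delay equations). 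In that case Proposition \ref{Cor3.3}(d) is silent and no contradiction is obtained. There is also a smaller slip: you want the \emph{smallest} index $j \geq 2$ with nonzero projection (the one realizing $\sigma_{\max}$), not the largest, although that does not affect the conclusion $V \geq 3$.

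The paper's proof circumvents the completeness issue by not applying the asymptotic statement to $\psi$ itself. Instead it uses Proposition \ref{compactset} (with $E(t) = S(t)$, $K = \{\psi\}$, $J = 1$) to obtain a fixed time $T > 0$ and a full neighborhood $N$ of $\psi$ with $V[S(T)\tilde\psi] \leq 1$ for all $\tilde\psi \in N$. It then perturbs $\psi$ within $N$ to a state $\tilde\psi$ satisfying $\pr(\tilde\psi) = 0$ and $\pr_3(\tilde\psi) \neq 0$ — a finite-dimensional, easily arranged condition (e.g.\ add a small element of $X_3$). For this $\tilde\psi$ the hypothesis of Proposition \ref{Cor3.3}(d) clearly holds, giving $\lim_{t\to\infty} V[S(t)\tilde\psi] = 3$, which contradicts $V[S(T)\tilde\psi] \leq 1$ and the monotonicity of $V$. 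You can repair your proof by inserting this perturbation step; the essential point is to replace a pointwise use of the asymptotic result by a locally uniform one.
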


\begin{proof} Ad a): For a solution as in a) and $ t \geq t_0$,  backward uniqueness  of, in particular, the zero solution
(see \eqref{backunique})  implies  that also  $x^{\psi}_{t} \neq 0$. Further, since  $V$ does not increase along solutions
(for all system  types under consideration) and has the minimal value 1,
one sees  that $1 \leq V(x^{\psi}_{t}) \leq V(x^{\psi}_{t_0}) =  V(\psi)  =  1$, hence also $x^{\psi}_t \in \Sigma$.

\medskip Ad b): Let $ \psi \in \Sigma  $ be given and assume    $\pr(\psi) = 0$. Applying Proposition \ref{compactset} to the singleton $ \{\psi\}$ and with $E(t) = S(t)$, we obtain a neighborhood $N$ of $\psi $ in $\mathbb{X}$ and $T >0$  such that
\[ \forall \tilde \psi \in N: \; V[S(T) \tilde \psi] \leq V(\psi) = 1.\]
We can find  $ \tilde \psi  \in N$ such that also
$\pr(\tilde \psi) = 0$, but  $ \pr_3(\tilde \psi)  \neq 0$.
It follows now from  \eqref{V13values}  and  from part d) of Proposition \ref{Cor3.3},  that $\lim_{t \to \infty} V[S(t)\tilde \psi] = 3$, which contradicts $V[S(T) \tilde \psi] \leq 1$ and the monotonicity of $V$.
Thus,   $\pr(\psi) = 0$ is impossible if $ \psi \in \Sigma$.
 (The proof of b)  here is analogous to the proof of Prop.  6.2, p. 83  in   \cite{Wal91}.)
 \end{proof}

\section{A two-dimensional invariant manifold and periodic solutions}

We first consider the local invariant manifold, the global  continuation of which by the semiflow
will be shown to contain a periodic solution in its closure.
With the norm $||\; ||_* $ introduced in the next theorem  and for subspaces $Y \subset \mathbb{X}$, $y \in Y$ and $r >0$
 we shall use  the notation
\[ B^*(y;r; Y):= \menge{z \in Y }{||y-z||_* < r}, \quad \overline{B^*(y;r; Y)}  := \menge{z \in Y }{||y-z||_* \leq r}.\]

Recall that $\sigma_j$ are the real parts of the eigenvalues from \eqref{sigmas}.

\begin{thm} \textbf{(The strong unstable manifold of zero.)} \label{WuuThm}

There  exist $\bar r >0$, a two-dimension\-al local manifold $W^{\mathrm{uu}}(\bar r)$ of class $C^1$, tangent to   $X^{\mathrm{uu}}$ at zero, with the following properties:
\begin{enumerate}
\item[1)]  There exist  an equivalent norm $||\;||_*$  on  $\mathbb{X}$ such that
for $\psi  = \tilde \psi + \psi^{\mathrm{uu}} \in  \mathbb{X}$ with $\tilde \psi \in X_3 \oplus Q,  \psi^{\mathrm{uu}} \in X^{\mathrm{uu}} $
one has $||\psi||_* = \max\{||  \psi^{\mathrm{uu}}||_*, ||  \tilde  \psi ||_*\}$, and a $C^1$-function \\
$w^{\mathrm{uu}}: B^*(0; \bar r; X^{\mathrm{uu}})   \to B^*(0; \bar r; X_3 \oplus Q) $ with
$w^{\mathrm{uu}}(0) = 0, \; Dw^{\mathrm{uu}}(0) = 0$ and
\[ W^{\mathrm{uu}}(\bar r)  = \menge{\psi^{\mathrm{uu}} + w^{\mathrm{uu}}(\psi^{\mathrm{uu}})}{ \psi^{\mathrm{uu}} \in X^{\mathrm{uu}}, ||\psi^{\mathrm{uu}}||_*  < \bar r }. \]

\item[2)]   For $  r \in (0, \bar r)$  we set  $W^{\mathrm{uu}}(r)  :=  W^{\mathrm{uu}}(\bar r)  \cap B_{\mathbb{X}, ||\;||_{*}}(0, r)$; then
\[
W^{\mathrm{uu}}(r) =  \menge{\psi^{\mathrm{uu}} + w^{\mathrm{uu}}(\psi^{\mathrm{uu}})}{ \psi^{\mathrm{uu}} \in X^{\mathrm{uu}}, ||\psi^{\mathrm{uu}}||_*  <  r }.
\]
The manifold $W^{\mathrm{uu}}(\bar r) $ is invariant  in the following sense:
 There exist $r_1 \in (0, \bar r) $,  $ K_1 \geq 1$ and $ \tilde \sigma   >0 $ with $ \sigma_2 <
\tilde \sigma  < \sigma_1 (= \sigma_0)$  such that
  \begin{enumerate} \item[(i)] $ \Phi([0, 1]\times   W^{\mathrm{uu}}(r_1)) \subset  W^{\mathrm{uu}}(\bar r)$;
\item[(ii)] For $\psi \in  W^{\mathrm{uu}}(r_1)$ there exists a solution $x^{\psi}:(-\infty, \infty) \to \R^{N+1} $ of \eqref{standardfb} with initial state $x^{\psi}_0 = \psi$, with $ x^{\psi}_t \in W^{\mathrm{uu}}(\bar r))$  for  all
$ t \in (-\infty, 0]$, and such that
 \begin{equation} ||x^{\psi}_t||_{C^0}  \leq K_1  e^{\tilde \sigma t} \text{ for }  t \in (-\infty, 0].
\label{decay} \end{equation}
\end{enumerate}

\item[3)]For $ \psi \in W^{\mathrm{uu}}(\bar r)$ there exists a solution $x^{\psi}:(-\infty, \infty) \to \R^{N+1} $ of \eqref{standardfb} with $x^{\psi}_0 = \psi$.
There is a  time $T_1$ with  $T_1 < - \tau  < 0$  (uniform for all $ \psi \in W^{\mathrm{uu}}(\bar r)$)
such that  $ x^{\psi}_t \in W^{\mathrm{uu}}(\bar r)$  for  all $ t \in  (-\infty, T_1]$. Also,
with $ \tilde K_1 :=  K_1  e^{-\tilde \sigma T_1}$, one has for such $\psi$
\[||x^{\psi}_t||_{C^0}  \leq \tilde K_1  e^{\tilde \sigma t} \text{ for }  t \in (-\infty, T_1]. \]

\item[4)]  If $ \psi \in  B^*(\bar r; \mathbb{X}) $  has a backward solution with all  $x^{\psi}_t  (t \leq 0) $  in
 $B^*(\bar r; \mathbb{X}) $  and satisfying the analogue of  property \eqref{decay} with some $\hat K_1 > 0 $ and some $\hat \sigma  > \tilde \sigma $,  then $ \psi \in  W^{\mathrm{uu}}(\bar r)$.
\end{enumerate}
\end{thm}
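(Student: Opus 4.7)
The plan is to construct $W^{\mathrm{uu}}(\bar r)$ by the Lyapunov--Perron method applied to the semiflow $\Phi$ near zero, in the spirit of \cite{Wal91, Wal95}. Write \eqref{standardfb} in abstract semilinear form $\dot y = L y + F(y)$, with $L$ the linearisation at zero (generator of the $C^0$-semigroup $S(t)$ on $\mathbb{X}$) and $F \in C^1$ satisfying $F(0) = 0$, $DF(0) = 0$. The semigroup $S(t)$ is eventually compact, so by the spectral mapping theorem the gap $\sigma_2 < \sigma_0 = \sigma_1$ in \eqref{sigmas} yields distinct exponential rates on the spectral subspaces of \eqref{specdeco}. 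Fix $\tilde\sigma \in (\sigma_2, \sigma_0)$ and a small $\varepsilon > 0$. Introduce an adapted norm by
\[
\|\psi^{\mathrm{uu}}\|_* := \sup_{t\geq 0} e^{(\sigma_0 - \varepsilon)t}\,\|S(-t)\psi^{\mathrm{uu}}\|_{C^0}, \qquad \|\tilde\psi\|_* := \sup_{t\geq 0} e^{-\tilde\sigma t}\,\|S(t)\tilde\psi\|_{C^0},
\]
for $\psi^{\mathrm{uu}} \in X^{\mathrm{uu}}$, $\tilde\psi \in X_3 \oplus Q$, and $\|\psi\|_* := \max\{\|\pi\psi\|_*,\|(I-\pi)\psi\|_*\}$. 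The backward expression is finite because $X^{\mathrm{uu}}$ is two-dimensional and $S(t)|_{X^{\mathrm{uu}}}$ is invertible with rate exactly $\sigma_0$. This produces the equivalent product norm asserted in part~1, together with the sharp estimates $\|S(t)\psi^{\mathrm{uu}}\|_* \leq e^{(\sigma_0-\varepsilon)t}\|\psi^{\mathrm{uu}}\|_*$ for $t \leq 0$ on $X^{\mathrm{uu}}$, and $\|S(t)\tilde\psi\|_* \leq e^{\tilde\sigma t}\|\tilde\psi\|_*$ for $t \geq 0$ on $X_3 \oplus Q$.

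For $\psi^{\mathrm{uu}} \in X^{\mathrm{uu}}$ with $\|\psi^{\mathrm{uu}}\|_* < \bar r$, I then seek a continuous $y:(-\infty, 0] \to \mathbb{X}$ with $\pi y(0) = \psi^{\mathrm{uu}}$ solving the Lyapunov--Perron integral equation
\[
y(t) = S(t)\psi^{\mathrm{uu}} + \int_{0}^{t} S(t-u)\pi F(y(u))\, du + \int_{-\infty}^{t} S(t-u)(I-\pi) F(y(u))\, du,
\]
in the weighted Banach space $E$ with norm $\|y\|_\sharp := \sup_{t \leq 0} e^{-\tilde\sigma t}\|y(t)\|_*$. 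The two integral operators on $E$ are bounded with norms of order $1/(\sigma_0 - \varepsilon - \tilde\sigma)$ and $1/(\tilde\sigma - \sigma_2')$, respectively, where $\sigma_2'\in(\sigma_2,\tilde\sigma)$. Combined with $\mathrm{Lip}(F|_{\overline{B^*(0;\bar r)}}) \to 0$ as $\bar r \to 0$, this makes the right-hand side, for $\bar r$ small, a uniform contraction on a closed ball of $E$, parametrised by $\psi^{\mathrm{uu}}$. The unique fixed point $y(\cdot;\psi^{\mathrm{uu}})$ depends $C^1$-smoothly on $\psi^{\mathrm{uu}}$ by the uniform contraction principle, and I set $w^{\mathrm{uu}}(\psi^{\mathrm{uu}}) := (I-\pi) y(0; \psi^{\mathrm{uu}})$; the identities $w^{\mathrm{uu}}(0) = 0$ and $Dw^{\mathrm{uu}}(0) = 0$ follow because $DF(0) = 0$ trivialises the linearised fixed-point equation at $\psi^{\mathrm{uu}} = 0$ on the $X_3 \oplus Q$-component. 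The pointwise bound $\|y(t)\|_* \leq \|y\|_\sharp\, e^{\tilde\sigma t}$ built into $E$ gives \eqref{decay} in the original norm.

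Parts~2(i) and 3 follow by translating the fixed point: for $t_0 \in [0,1]$ the restriction of $y(\cdot + t_0; \psi^{\mathrm{uu}})$ to $(-\infty, 0]$ is the fixed point at parameter $\pi \Phi(t_0,\psi^{\mathrm{uu}}+w^{\mathrm{uu}}(\psi^{\mathrm{uu}}))$, and choosing $r_1$ small enough keeps its parameter in $B^*(0;\bar r;X^{\mathrm{uu}})$. For part~3, the estimate $\|\pi y(t)\|_* \leq e^{(\sigma_0-\varepsilon)t}\|\psi^{\mathrm{uu}}\|_* + O(\bar r^2)$ derived from the integral equation shows that $\pi y(t)$ decays at the faster rate $\sigma_0 - \varepsilon$; hence there is a uniform $T_1 < -\tau$ with $\|\pi y(T_1)\|_* < r_1$ for every $\psi \in W^{\mathrm{uu}}(\bar r)$, after which part~2(ii) produces the global backward extension inside $W^{\mathrm{uu}}(\bar r)$ and the bound with $\tilde K_1 = K_1 e^{-\tilde\sigma T_1}$. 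For part~4, any $\psi$ admitting a backward orbit in $B^*(0;\bar r;\mathbb{X})$ with decay rate $\hat\sigma > \tilde\sigma$ gives a curve in $E$; absolute convergence of $\int_{-\infty}^{0} \|S(-u)(I-\pi)F(x^\psi_u)\|_*\,du$ (thanks to $\hat\sigma > \tilde\sigma$) shows that $t \mapsto x^\psi_t$ solves the same fixed-point equation with parameter $\pi\psi$, so uniqueness forces $x^\psi_\cdot = y(\cdot;\pi\psi)$ and thus $\psi \in W^{\mathrm{uu}}(\bar r)$. The main technical obstacle is the $C^1$-smoothness step: it requires the formal derivative equation to inherit a contraction on a fibred space of $(y, D_{\psi^{\mathrm{uu}}} y)$-pairs in weighted norms, which is routine once the adapted norm converts the spectral gap into genuine contraction rates, but is the most delicate bookkeeping of the construction.
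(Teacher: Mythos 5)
Your approach is genuinely different from the paper's: you construct $W^{\mathrm{uu}}$ via a Lyapunov--Perron fixed-point argument applied directly to the semiflow, whereas the paper applies the pseudo-hyperbolic invariant-manifold theorem of Hirsch--Pugh--Shub to the \emph{time-one map} $f=\Phi(1,\cdot)$ and then transfers invariance and estimates from the map to the semiflow via the dynamical characterization of $W^{\mathrm{uu}}_f$ by backward trajectories with $\|\psi_n\|\rho^{|n|}\to 0$. The paper's route is shorter because it invokes a ready-made theorem and all the backward-solution work is done at the discrete level; your route has the advantage of giving semiflow invariance for free (no translation-invariance argument needed), but it requires you to actually build the manifold.

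There is, however, a genuine gap in the proposal as written: the variation-of-constants formula
$y(t) = S(t)\psi^{\mathrm{uu}} + \int_0^t S(t-u)\pi F(y(u))\,du + \int_{-\infty}^t S(t-u)(I-\pi)F(y(u))\,du$
does not parse for a delay equation in the state space $\mathbb{X}$. The nonlinearity $F$ takes values in $\R^{N+1}$ (it is the difference of right-hand sides, a pointwise quantity), not in $\mathbb{X}$, so $S(t-u)F(y(u))$ is undefined. The standard repair is either the sun-star formalism of Diekmann et al.\ (where $F(y(u))$ is lifted to $X^{\odot*}$ and one uses the weak-star integral $\int S^{\odot*}(t-u)\ell F(y(u))\,du$), or the fundamental-solution/Stieltjes-integral representation à la Hale. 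Either would make your argument rigorous, and both are available in the paper's reference list, but neither is invoked, and without one the fixed-point map is not a self-map of the weighted space $E$. This is precisely the subtlety the paper avoids by working with the time-one map, which \emph{is} a genuine $C^1$ map of $\mathbb{X}$.

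Two smaller points. First, your claim that $\pi y(t)$ ``decays at the faster rate $\sigma_0-\varepsilon$'' is not correct: the nonlinear correction $\int_0^t S(t-u)\pi F(y(u))\,du$ decays only at rate $\tilde\sigma<\sigma_0-\varepsilon$, so $\pi y(t)=O(e^{\tilde\sigma t})$, not $O(e^{(\sigma_0-\varepsilon)t})$. The conclusion you draw (existence of a uniform $T_1$ with $\|\pi y(T_1)\|_*<r_1$) still holds from the weaker rate $\tilde\sigma$, so this is a local error rather than a structural one, but the stated estimate is wrong. Second, be aware that after the cutoff needed to make $\mathrm{Lip}(F)$ small, the resulting global manifold will only coincide with the true strong unstable manifold inside a ball; one must check (as the paper does via part~4) that the ball chosen for the cutoff contains $B^*(0;\bar r;\mathbb{X})$ so that the dynamical characterization of part~4 holds for trajectories genuinely in that ball.
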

\begin{proof}
(We only indicate the main steps, since there is  a wealth of similar results in the literature which are applicable to this situation. See also the analogue stated  for  a scalar equation in Theorem 5.1 on p. 78 of \cite{Wal91}.)

\medskip Step 1. With $ \sigma_0  $ from \textbf{(A1)}, and $ \sigma_2 $  as in
\eqref{sigmas}, we  choose $\tilde \sigma_2  \in (\sigma_2, \sigma_0) $ (not necessarily positive, if $ \sigma_2 < 0$).
Then an  adapted norm $ || \;  ||_* $ on $\mathbb{X}$  can be chosen  such that the semigroup $\{S(t)\}_{t \geq 0} $ generated by the linearization  of \eqref{standardfb} at zero satisfies
\[ ||S(t) u ||_* \geq e^{ \sigma_0   t} ||u||_*; \quad
 ||S(t) v ||_* \leq e^{\tilde \sigma_2 t} ||v||_* \; \quad (u \in X^{\mathrm{uu}}, v \in X_3 \oplus   Q,  t \geq 0),\] and $||u + v||_* = \max\{||u||_*, ||v||_*\}$.
These estimates, first with additional positive constants as factors,  can be obtained   e.g.  from  \cite{Hal71}, Theorem 22.1, or from  Theorem 2.9, p. 99 of \cite{DiekmannEtAl}, where following  the tradition these estimates  are also stated with the same exponent on both spaces. Just as \eqref{Qdecay} above, they follow
 alternatively from  \cite{EngelNagel}, Corollary 3.12.
  Note also that we can take $ \sigma_0 $ (and not a slightly smaller $\tilde \sigma_0$)  in the exponent for the estimate on $X^{\mathrm{uu}}$,  in view of the representation \eqref{X2form}.  Then, compare  \cite{BJ},   Lemma 2.1, p. 10, for  the adapted norms which  replace the constant  factors by one.

\medskip Step 2. For simplicity of notation, we assume for the rest of the proof that the delay $ \tau $ equals $1$, which is no restriction.  Choose  $\tilde \sigma >0, \tilde \sigma \in (\tilde \sigma_2, \sigma_0)$, and define
$ \rho := e^{\tilde \sigma} (>1)$.
The time  one map $f: = \Phi(1, \cdot) $  associated to system \eqref{standardfb} is of class  $C^1$, and
 $Df(0) = D_2\Phi(1,0) = S(1)$  is  then    $\rho$-pseudo hyperbolic (i.e. $ \sigma(Df(0))\cap \menge{z \in \Co}{|z| =  \rho} = \emptyset $).

Statements analogous to that of Theorem \ref{WuuThm}  for the map $f$ follow from Theorem 5.1, p. 53 and its local version in  Corollary 5.4
on p. 60 of \cite{HPS}, with  $ \bar r >0$ and  local  invariant manifolds   $W^{\mathrm{uu}}_f(r) \; ( 0 < r \leq \bar r$, each of which is
the graph of   the same   function $w^{\mathrm{uu}}$ from  $B_*(0; \bar r; X^{\mathrm{uu}}) $ to  $B^*(0; \bar r; X_3 \oplus Q)$, restricted
to $B_*(0; r; X^{\mathrm{uu}}) $.
 In particular, Theorem 5.1 from  \cite{HPS}  gives  the following dynamical characterization:
$W^{\mathrm{uu}}_f(r)$ coincides with the set of all  $\psi $ in  $B^*(0; r; \mathbb{X})$ with the property   that  there exists a  backward trajectory $(\psi_n)_{n\in -\N_0}  \subset B^*(0; r; \mathbb{X})$   (i.e., $\psi_n = \Phi(1, \psi_{n-1}),\; n \in - \N_0$)  with $ \psi_0 = \psi$,
and with
\begin{equation} ||\psi_n||_{C^0} \cdot \rho^{|n|} \to 0  \text{  as }  n \to - \infty. \label{rhon} \end{equation}
(Here it is not important if one takes $||\;||_{C^0}$ or $||\;||_*$.) It is  not explicitly stated  in Theorem 5.1 from  \cite{HPS}, but can be seen from  the proof (the estimate on
the bottom of p. 59 there, together with the property $ \alpha^{-1} -  \epsilon > \rho $, in the notation of \cite{HPS}) that the boundedness of
$ ||\psi_n||_{C^0}\cdot \rho^{|n|} $ as $ n \to - \infty$, which  for each $\psi_0$ as above  follows from the convergence to zero in \eqref{rhon}, is uniform for all $ \psi \in W^{\mathrm{uu}}_f(\bar r)$, so there exists a constant $M>0$ such that for  all sequences $(\psi_n) $ as above,
 \begin{equation} ||\psi_n||_{C^0}\cdot \rho^{|n|} \leq M \; (n \in -\N_0). \label{Mestim} \end{equation}
The tangency property  $Dw^{\mathrm{uu}}(0) = 0$ follows from Corollary (5.3), p. 60 there.

\medskip Step  3. We simply define     $  W^{\mathrm{uu}}(\bar r) :=     W^{\mathrm{uu}}_f(\bar r)$. The first statement of part 2) of the theorem then follows from the construction of $ ||\; ||_*$ as max-norm w.r. to the spaces $x^{\mathrm{uu}}$ and $ X_3 \oplus Q$.
Continuity of the semiflow implies that we can choose  $r_1 \in (0, \bar r) $  such that
 $ \Phi([0, 1]\times  B^*(0; r_1; \mathbb{X}) \subset B^*(0; \bar r; \mathbb{X})$.

We show that this $r_1$ also has properties  (i) and (ii) above:

Consider $\psi \in  W^{\mathrm{uu}}(r_1)$ and $ \theta  \in [0,1]$.  Then  $\psi$ has a backward trajectory $(\psi_n)_{n \in -\N_0}$ in
$ B^*(r_1; \mathbb{X})$   with  property \eqref{rhon}. Like $ \psi = \psi_0$, all these $ \psi_n$ also satisfy $||\psi_n||_* <  r_1$
and the same dynamical characterization  \eqref{rhon}, hence are contained in
$W^{\mathrm{uu}}_f(r_1)$.
 We can then define a solution $x^{\psi}: \R \to \R^{N+1} $ of \eqref{standardfb}
 by $x^{\psi}_0 := \psi_0 = \psi  $ and
\begin{equation}
x^{\psi}_t = \Phi(t-n,\psi_n) \text{   for }  t \in (n,  n+1),
\label{flowdef}
\end{equation}
for $ n  \in -\N_0$; so $x^{\psi}_n =  \psi_n $ for $ n \in - \N_0$. This  solution    then has all $x^{\psi}_t \;(t \in (-\infty, 1])$  in the set
$ \Phi([0,1] \times  W^{\mathrm{uu}}_f(r_1))$, which is contained  in $\Phi([0, 1]\times  B^*(0; r_1; \mathbb{X}) $ and hence in
$ B^*(0; \bar r; \mathbb{X})$. Therefore, the states $ \chi_n := \Phi(\theta, \psi_n) = x^{\psi}_{n + \theta}$ form a backward trajectory  of the time one map $f$ within $ B^*(0; \bar r; \mathbb{X})$, and $\chi_0 = \Phi(\theta, \psi)$.
Further, there exists a   Lipschitz constant $L$ for  $ (\Phi(\theta, \cdot))$ on  $B^*(0; \bar r; \mathbb{X})$,
uniform for all $ \theta \in [0, 1]$, as follows from  the differentiability properties of the semiflow w.r. to the initial state.
With this $L$,
 \[ ||\chi_n||_{C^0}  \rho^{|n|} = ||\Phi(\theta, \psi_{n}||_{C^0} \rho^{|n|}   \leq L\cdot||\psi_{n}||_{C^0}\rho^{|n|}  \to 0  \; (n \to - \infty).  \]
It follows now from the dynamical characterization of $W^{\mathrm{uu}}_f(\bar r)$  that $ \Phi(\theta, \psi) = \chi_0 \in W^{\mathrm{uu}}_f(\bar r)  =  W^{\mathrm{uu}}(\bar r)$, which proves (i).   Since all $x^{\psi}_t \;(t \leq 0)  $ are  contained in
$\Phi([0, 1]\times   W^{\mathrm{uu}}(r_1))$ (see \eqref{flowdef}),  we now obtain (using  (i)) that   $x^{\psi}_t \in W^{\mathrm{uu}}(\bar r)\;
(t \leq 0)$.

 We prove estimate \eqref{decay}: For $ t \in (-\infty,0]$,  $ t \in [n, n+1] $ with $ n \in -\N$, we get  with $L$ from above and using  \eqref{Mestim}
\[ ||x^{\psi}_t||_{C^0}  = ||\Phi(t-n,\psi_n)||_{C^0} \leq L ||\psi_n||_{C^0} \leq L\frac{M}{\rho^{|n|}}.\]
Recalling the definition of $\rho$,  we conclude (since $t = -|n|+ \theta $ for some $ \theta \in [0, 1]$, and $\tilde\sigma >0$)
\[ ||x^{\psi}_t||_{C^0} \leq LMe^{-\tilde \sigma  |n|} = LM e^{\tilde \sigma  t} \cdot e ^{-\tilde \sigma \theta}
\leq LM e^{\tilde \sigma  t}. \]
This proves \eqref{decay}  with $  K_1 := LM$. (Parts 1) and 2) of the theorem are proved.)

\medskip Step 4.  Proof of part 3) of the theorem: $\psi \in  W^{\mathrm{uu}}(\bar r) $  has a backward trajectory  under $f$  in $W^{\mathrm{uu}}(\bar r)$ satisfying \eqref{rhon}, and also a
corresponding  solution $x^{\psi}$ constructed as in \eqref{flowdef}.
There exists $ N \in -\N, \; N < -\tau$ such that $\psi_n \in W^{\mathrm{uu}}( r_1)  $ for  $n \leq N$. Equivalence of $||\;||_{C^0}$ and $||\; ||_*$ together with estimate (\ref{Mestim}) shows that $N$ can be chosen uniformly for all $\psi \in  W^{\mathrm{uu}}(\bar r)$.
Setting $T_1 := N$, property (i)  from  part 2) of the theorem then shows
 $x^{\psi}_t \in W^{\mathrm{uu}} (\bar r)  $  for  $ t \leq T_1$. Further,
 with $ \chi := x^{\psi}_N, $ we have from (\ref{decay}) for $ t \leq N$:
 $||x^{\psi}_t||_{C^0}  = ||x^{\chi}_{t-N} || \leq K_1e^{\tilde \sigma(t-N)} = K_1 e^{-\tilde \sigma T_1}\cdot e^{\tilde \sigma t}$, which  proves the exponential estimate.

\medskip Proof of part 4) of the theorem:
Assume  $ \psi $ has a backward solution with all  $x^{\psi}_t  (t \leq 0) $ in  $ B^*(\bar r; \mathbb{X})$,   and satisfying the analogue of    \eqref{decay} with some $\hat  K_1 >0 $ and $\hat \sigma > \tilde \sigma$.
Then $\psi_n := x^{\psi}_n \; (n \in - \N_0)$ defines a backward trajectory of $f$ in  $ B^*(\bar r; \mathbb{X})$,
and \[ ||\psi_n ||_{C^0} \rho^{|n|} = ||x^{\psi}_n||\cdot e^{\tilde\sigma |n|}  \leq \hat  K_1  e^{\hat \sigma  n}e^{\tilde\sigma  |n|}=   \hat  K_1  e^{(\hat \sigma  - \tilde \sigma )n} \to 0 \; ( n \to - \infty),\]
so $ \psi_0  = \psi $ satisfies the dynamical characterization and hence is in $  W^{\mathrm{uu}}_f(\bar r) = W^{\mathrm{uu}}(\bar r) $.
\end{proof}

\begin{rem}
  1) With additional  effort it is possible  to arrange that the backward solutions $x^{\psi}$  starting at  $\psi \in  W^{\mathrm{uu}}(\bar r))$
satisfy $x^{\psi}_t \in   W^{\mathrm{uu}}(\bar r)$  for all $ t \leq 0$ (one needs that
$t \mapsto ||\pr(x^{\psi}_t)||_*$ is  increasing on $(-\infty, 0]$), and also to achieve that the last
statement in the theorem holds also  in case $ \hat \alpha = \tilde\alpha$,   but we will  not need this.

2) The phenomenon described  by T. Krisztin for center manifolds in Section 3 of \cite{Krisztin}, namely, the invariance for
a time-$1$ map, but not for the semiflow, is closely connected to the non-uniqueness of center manifolds. It does not occur for the
manifold above, due to the dynamical characterization which implies the uniqueness.
\end{rem}

\subsection{The forward extension $W$ of $W^{\mathrm{uu}}(\bar r)$.}

Combining methods from \cite{MPS1} and \cite{Wal91},  we will study the global forward extension
\[ W := \bigcup_{t \geq 0} \Phi(t, W^{\mathrm{uu}}(\bar r)) \]
of $W^{\mathrm{uu}}(\bar r)$ by the semiflow, which actually defines  a flow on $W$.
The local manifold $W^{\mathrm{uu}}$ and its extension $W$ are analogues of
$W_0 $ and $W$ from p. 79 of \cite{Wal91}.

We want to show that  the closure of $W$ is the union of $W$ with a `bounding' periodic orbit, to which all nonzero solutions in $W$
converge.

The proof of the result  below is somewhat   analogous to the
proof of part 3 of Corollary 6.2, p. 89 in \cite{Wal91}, but here with the set $\Sigma$ in place of the
set $S$ of real-valued continuous functions with at most one sign change from  \cite{Wal91}.

  \begin{lem} \label{pronW}
If $\psi_1, \psi_2 \in \overline{W}$ (the closure of $W$ in $\mathbb{X}$)   and   $\psi_1 \neq \psi_2$ then
$\psi_1 - \psi_2  \in \Sigma$ (see Lemma \ref{projonsigma}), so   $\pr(\psi_1 - \psi_2) \neq 0 $.
In particular, $ \overline{W}\setminus\{0\} \subset \Sigma$.
\end{lem}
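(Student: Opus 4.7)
The special case $\overline{W}\setminus\{0\}\subset\Sigma$ corresponds to $\psi_2=0$ (which lies in $W$ because $0\in W^{\mathrm{uu}}(\bar r)$), so it suffices to prove the general statement. The first move is a reduction to $\psi_1,\psi_2\in W$ by lower semicontinuity of $V$: given distinct $\psi_i\in\overline{W}$, approximate them by sequences $\tilde\psi_{i,n}\to\psi_i$ in $W$ (automatically distinct for $n$ large); granting $V(\tilde\psi_{1,n}-\tilde\psi_{2,n})=1$ in the $W$-case, lower semicontinuity together with $V\geq 1$ forces $V(\psi_1-\psi_2)=1$, and then Lemma \ref{projonsigma}(b) supplies $\pr(\psi_1-\psi_2)\neq 0$.

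So I fix $\psi_1\neq\psi_2\in W$. By parts 2) and 3) of Theorem \ref{WuuThm} each $\psi_i$ carries a backward solution $x^{\psi_i}$ on $\R$ whose state lies in $W^{\mathrm{uu}}(\bar r)$ for all sufficiently negative $t$ and decays there exponentially at rate $\tilde\sigma>0$. The difference $\Delta(t):=x^{\psi_1}(t)-x^{\psi_2}(t)$ satisfies the non-autonomous cyclic feedback system \eqref{nonaut1}--\eqref{nonaut3} with sign conditions \eqref{coeffsigns}, so by \eqref{decreas} the integer $V(\Delta_t)$ is non-increasing in $t$; forward uniqueness forbids $\Delta_t=0$ at any $t$. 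Writing $x^{\psi_i}_t=p_i(t)+w^{\mathrm{uu}}(p_i(t))$ with $p_i(t)=\pr(x^{\psi_i}_t)\in X^{\mathrm{uu}}$, the graph structure forces $q(t):=p_1(t)-p_2(t)\neq 0$, and because $p_i(t)\to 0$ and $Dw^{\mathrm{uu}}(0)=0$ the mean-value estimate yields
\[
\|\Delta_t-q(t)\|_*\leq\epsilon(t)\,\|q(t)\|_*, \qquad \epsilon(t)\to 0 \text{ as } t\to-\infty.
\]
Hence the normalized direction $\Delta_t/\|q(t)\|_*$ approaches the compact unit sphere $\mathbf{S}_{\sigma_0}\subset X^{\mathrm{uu}}$, on which $V\equiv 1$ by \eqref{V13values}.

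By \eqref{decreas} together with the scale invariance of $V$, it suffices to exhibit a single time $t^*$ with $V(\Delta_{t^*})=1$. The plan is to invoke Proposition \ref{compactset}: applied to $K=\mathbf{S}_{\sigma_0}$ and the autonomous semigroup $S$ generated by the linearization of \eqref{standardfb} at zero, it produces a neighborhood $N_0$ of $\mathbf{S}_{\sigma_0}$ and a stable time $T_0>0$ with $V[S(T_0)\phi]\leq 1$ for every $\phi\in N_0$. Because $x^{\psi_i}_t\to 0$, the coefficients of the $\Delta$-system on $[t^*,t^*+T_0]$ converge in $\|\cdot\|_{C^0}$ to those of this linearization as $t^*\to-\infty$, and Corollary \ref{evopcontX} promotes this to convergence of the evolution operators $U(t^*+T_0,t^*)\to S(T_0)$ in $L_c(\mathbb{X},\mathbb{X}^1)$. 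Re-applying Proposition \ref{compactset} to the $\Delta$-system itself at a sufficiently negative starting time $t^*$, one should obtain a neighborhood and stable time close to $(N_0,T_0)$, with the already-close $\Delta_{t^*}/\|q(t^*)\|_*$ inside it; scale invariance of $V$ then delivers $V(\Delta_{t^*+T_0})\leq 1$, hence $=1$, which by monotonicity propagates to $V(\Delta_0)=1$.

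The main obstacle is precisely this last transfer from the autonomous stable-time statement for $S$ to one for the genuine non-autonomous $\Delta$-evolution as $t^*\to-\infty$. Because $V$ is only lower semicontinuous, one cannot convert the operator-norm smallness supplied by Corollary \ref{evopcontX} into $V$-smallness directly; the argument must instead re-apply Proposition \ref{compactset} to the $\Delta$-system at time $t^*$, and verify that the stable times and neighborhoods produced in the background by MPS1's Proposition 4.9 remain under uniform control as the $\Delta$-coefficients approach their autonomous limit. This is the technical heart of the proof and plays the role occupied by the stable-time machinery in Walther's scalar treatment \cite{Wal91}.
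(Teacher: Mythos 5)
Your setup matches the paper's up through the reduction to $\psi_1,\psi_2 \in W$ (via lower semicontinuity, which is what Proposition \ref{Sigmaclos} encodes), the introduction of the backward solutions, the nonvanishing of $\Delta_t$, and the observation that the normalized differences approach the unit sphere $\mathbf{S}^{\mathrm{uu}}$ of $X^{\mathrm{uu}}$ as $t\to -\infty$. You also correctly locate the crux: converting the operator-norm closeness of the non-autonomous evolution $U$ to the autonomous $S(\tau)$ into a statement about $V$ is the difficult step, because lower semicontinuity of $V$ in the $C^0$ topology only gives a one-sided bound.

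But the mechanism you sketch does not close that gap, and you say so yourself. Proposition \ref{compactset} applied to the $\Delta$-system at starting time $t^*$ would produce a $C^0$-neighborhood $N^{(t^*)}$ of $\mathbf{S}^{\mathrm{uu}}$ and a time $T^{(t^*)}$, but both depend on $t^*$ and on the particular $\Delta$-coefficients there; to conclude you would need the $N^{(t^*)}$ to remain uniformly wide (or the convergence of $\Delta_{t^*}/\|q(t^*)\|_*$ to $\mathbf{S}^{\mathrm{uu}}$ to outrun the shrinkage), and neither Proposition \ref{compactset} nor the quoted Proposition 4.9 of Mallet-Paret and Sell supplies such uniformity in the coefficients. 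Establishing it would mean reopening the internals of the stable-time construction, which is precisely what the paper avoids.

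The paper's actual proof sidesteps stable times entirely and is shorter. The point is that $S(\tau)e^* \in \mathbb{X}^1$, and since $e^* \in \mathbf{S}^{\mathrm{uu}}$ is a state from the eigenspace $X^{\mathrm{uu}}$, the function $S(\tau)e^*$ has the explicit sinusoidal form \eqref{X2form} with only simple zeros; hence $V=1$ not merely at $S(\tau)e^*$ but on a whole $C^1$-open neighborhood $N_1$ of it in $(\mathbb{X}^1,\|\cdot\|_{C^1})$. Simple zeros are stable under $C^1$-small perturbation — this is strictly stronger than lower semicontinuity in $C^0$, and it is an \emph{upper} bound, which is exactly what your argument is missing. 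Corollary \ref{evopcontX}, giving $U(t_j+\tau,t_j)\to S(\tau)$ in $L_c(\mathbb{X},\mathbb{X}^1)$, then shows $\Delta_{t_j+\tau}/\|\Delta_{t_j}\|_{C^0}\to S(\tau)e^*$ in the $C^1$ norm, so it lies in $N_1$ for large $j$ and $V(\Delta_{t_j+\tau})=1$. Lemma \ref{projonsigma} a) propagates this forward along the $\Delta$-evolution to $\psi_1-\psi_2\in\Sigma$. So the $C^1$-valued convergence that your proposal obtains from Corollary \ref{evopcontX} but does not really use is precisely the ingredient that replaces the delicate stable-time machinery by the elementary fact that the number of simple zeros is locally constant in $C^1$.
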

\begin{proof} In the proof we  briefly  write $W^{\mathrm{uu}}$ for $W^{\mathrm{uu}}(\bar r)$.

1. Consider first   different states $ \chi_1, \chi_2 \in  W^{\mathrm{uu}}$. We have representations $\chi_j = \varphi_j + w^{\mathrm{uu}}(\varphi_j), \; j = 1,2$,
and \[ \begin{aligned}\chi_1 - \chi_2 & = \varphi_1- \varphi_2 + \int_0^1D w^{\mathrm{uu}}(\varphi_1 + s\cdot
(\varphi_1- \varphi_2))\,ds  (\varphi_1- \varphi_2) \\
& = [\text{ inc}_{X^{\mathrm{uu}}} +  \int_0^1D w^{\mathrm{uu}}(\varphi_1 + s\cdot(\varphi_1- \varphi_2))\,ds]
(\varphi_1- \varphi_2). \end{aligned},  \]
where $\text{ inc}_{X^{\mathrm{uu}}}: X^{\mathrm{uu}} \to \mathbb{X}  $ denotes the inclusion map.
With the  unit sphere $\mathbf{S}^{\mathrm{uu}} $  in $ X^{\mathrm{uu}}$ (w.r.  to $||\;||_*$, but that is not essential), it follows  from  $Dw^{\mathrm{uu}}(0) = 0$ that the unit vectors
$(\chi_1 - \chi_2)/||\chi_1 - \chi_2||_*$ converge to $\mathbf{S}^{\mathrm{uu}} $ as
$\chi_1, \chi_2 \to 0, \chi_j \in W^{\mathrm{uu}}, j = 1,2, \chi_1\neq \chi_2$.

\medskip  2. Consider now  two different  states $\psi_1, \psi_2 $ in $W$. The definition of $W$ and backward uniqueness imply that there exist unique  backward solutions  $x^{\psi_1}, x^{\psi_2}:(-\infty, 0] \to \R^{N+1} $ with $x^{\psi_j}_0 = \psi_j, \, j = 1,2$,
and (from part 2) of Theorem \ref{WuuThm}) that there exists  $T_1 < - \tau < 0  $ such that $x^{\psi_j}_t \in W^{\mathrm{uu}}, j = 1,2 $ for  $ t \leq T_1$, and  the convergence to zero for  $ t \to -\infty$  as described in part 3) of Theorem  \ref{WuuThm} holds for both solutions.
  The first step of the proof implies  that  there exists a sequence $(t_j)_{j \in \N}$
with $ t_{j+ 1} < t_j < T_1  \;(j  \in \N), \; t_j \to - \infty \;(j \to \infty)  $ and a  state $e^* \in \mathbf{S}^{\mathrm{uu}}$ such that

\begin{equation}  (x^{\psi_2}_{t_j}  - x^{\psi_1}_{t_j})/ ||   \ldots  ||_{C^0}  \to e^*
\text{  (in the norm }   ||\; ||_{C^0} \text{  of }   \mathbb{X})
\text{ as }  j \to \infty.  \label{xtnconv} \end{equation}

\medskip 3. The   time-$\tau$-map $ S(\tau) \in L_c(\mathbb{X}, \mathbb{X})$  corresponding to  the
linearization of equation \eqref{standardfb} at zero  and the state  $S(\tau)  e^*$ have  the following properties:

\medskip
(i) There exists a neighborhood  $N_1$ of  $S(\tau)  e^*$ in $(\mathbb{X}^1, ||\;||_{C^1})$ such that $V = 1 $ on $N_1$.

(ii)  $S(\tau)$  can  be regarded as an operator in $L_c[\mathbb{X},
 (\mathbb{X}^1, ||\; ||_{C^1})]$.

\medskip  Property (i) is true  for more general states with a $C^1$ past history, but is particularly obvious for $S(\tau) e^* $ from the representation  of solutions of system \eqref{constcoeff}   with
states in $X^{\mathrm{uu}}$  given in   \eqref{X2form}.

Property (ii)  follows directly from Corollary \eqref{evopcontX}, applied to the special case of a constant coefficient equation. The same corollary also shows that
the evolution operators $ U(t+\tau,t) \; (t \in \R) $  of the system of type \eqref{nonaut1} - \eqref{nonaut3} which describes the development of $  x^{\psi_2}(t) - x^{\psi_1}(t)$,
converge  to $S(\tau)$ in  $L_c(\mathbb{X}, \mathbb{X}^1)$ as $ t \to - \infty$: The
 coefficients $  a_j, b_j, c_j$ of this system satisfy
\[  a_j(t + \cdot) =  \int_0^1 \partial_1f^j[x^{\psi_1}_{j-1}(t)  + s\cdot (x^{\psi_2}_{j-1}(t)- x^{\psi_1}_{j-1})(t), x^{\psi_2}_j(t),  x^{\psi_2}_{j+1}(t)] \, ds
 \to    \partial_1f^j(0),
\] uniformly on $[0, \tau]$ as $ t \to \infty$, with a similar convergence for the $b_j$ and $  c_j$.
The limit values are the constant coefficients of the system that generates $S(\tau)$.
Thus we infer from   Corollary \ref{evopcont} that
\begin{equation} ||U(t+\tau, t) - S(\tau)||_{L_c[\mathbb{X},\mathbb{X}^1]}\to 0 \text{ as } t \to -\infty. \label{Uconv} \end{equation}

\medskip 4. Set $\Delta_t :=  x^{\psi_2}_t  -  x^{\psi_1}_t  \in \mathbb{X}  $ for $ t \leq 0$. From \eqref{xtnconv}  we have
$\Delta_{t_j}/||\Delta_{t_j}||_{C^0}\to e^* \;(j \to \infty)$, and \eqref{Uconv} gives that,  as $j\to \infty$,
\[\Delta_{t_j + \tau}/||\Delta_{t_j}||_{C^0}  = U(t_j+  \tau, t_j) \Delta_{t_j}/||\Delta_{t_j}||_{C^0}
\to S(\tau)e^* \text{ in }   (\mathbb{X}^1, ||\;||_{C^1}). \]
With  $N_1$ from property (i) above we conclude that   for  all large enough $  j \in \N$
one has $ \Delta_{t_j + \tau}/||\Delta_{t_j}||_{C^0}\in N_1$, and hence
\[ V(\Delta_{t_j + \tau}) = V(\Delta_{t_j + \tau}/||\Delta_{t_j}||_{C^0}) = 1,  \]
so $\Delta_{t_j + \tau} \in \Sigma$. The invariance of $ \Sigma$  under the operators $U(t,s) \; ( t \geq s) $  (Lemma \ref{projonsigma} a) now gives that also
\[ \psi_1 - \psi_2 = \Delta_0 =  U(0, t_j + \tau) \Delta(t_j + \tau) \in \Sigma \text{ for these } j,\]
so $ \psi_1 -\psi_2 \in \Sigma$.

\medskip
5. We extend the result of the last step to two different states $ \hat \psi_1,  \, \hat \psi_2 \in
\overline{W}$: For such $ \hat \psi_1,  \, \hat \psi_2$ there exist
sequences $(\psi_{1,n}), (\psi_{2,n}) $ in $W$ with $(\psi_{j,n}) \to \hat \psi_j \, (n \to \infty), j = 1,2$, and we can assume $\psi_{1,n} \neq \psi_{2,n} $  for  all $ n \in \N$. From step 4 we know
$ \psi_{1,n} -\psi_{2,n} \in \Sigma $ for all $ n$, and hence Proposition \ref{Sigmaclos} gives
that also $ \hat \psi_1 - \hat \psi_2 \in \overline{\Sigma} \setminus\{0\} = \Sigma$.
Finally,  Lemma \ref {projonsigma} shows that $  \text{pr}(\hat \psi_1 - \hat \psi_2) \neq 0$.
Now  $\overline W\setminus\{0\} \subset \Sigma$ follows from $0 \in W \subset \overline{W}$, and $\psi = \psi - 0$ for  $\psi \in  \overline W\setminus\{0\}$.
\end{proof}

We now obtain a graph description of $\overline W$. Recall the decomposition from
\eqref{specdeco}.

\begin{cor} \label{graphrep}
\begin{enumerate}
\item[ a)] There exists a unique 
function $w:  X^{\mathrm{uu}} \supset  \pr(\overline{W})  \to X_3 \oplus Q$
 which coincides  with $  w^{\mathrm{uu}} $  on $B^*(0; \bar r; X^{\mathrm{uu}})$ such that
\begin{equation} \overline{W} = \graph(w) := \menge{\psi + w(\psi)}{\psi \in \pr(\overline{W})},
\label{globalgraph}\end{equation}
and $ w $ is continuous.

\item[b)]   $ \pr(W)$ is open in $X^{\mathrm{uu}}$.

\item[c)] Both $\overline{W} $ and $\overline{W}\setminus W$ are  (forward)
invariant under the semiflow $  \Phi$.
\end{enumerate}
\end{cor}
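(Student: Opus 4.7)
The plan is to prove the three parts in sequence, using Lemma~\ref{pronW} as the key injectivity fact together with the compactness of $\overline{W}$ and Brouwer's invariance of domain. A preliminary observation I will use throughout is that $\overline{W}$ is compact: each $\psi \in W^{\mathrm{uu}}(\bar r)$ has a backward orbit tending to $0$ by parts~2)--3) of Theorem~\ref{WuuThm} and a forward orbit bounded via the dissipativity~\eqref{evbounded}, so $\psi$ lies on a bounded entire orbit and therefore in the global attractor $\mathcal{A}$. Forward invariance of $\mathcal{A}$ gives $W \subset \mathcal{A}$ and hence $\overline{W} \subset \mathcal{A}$; in particular the extended flow $\Phi_{\mathcal{A}}$ from Proposition~\ref{attrflow} is available on all of $\overline{W}$.

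For part~(a), Lemma~\ref{pronW} ensures that $\pr$ restricted to $\overline{W}$ is injective, so for each $\varphi \in \pr(\overline{W})$ there is a unique $\chi_\varphi \in \overline{W}$ with $\pr(\chi_\varphi) = \varphi$; define $w(\varphi) := \chi_\varphi - \varphi \in X_3 \oplus Q$. Then \eqref{globalgraph} holds by construction, and on $B^*(0; \bar r; X^{\mathrm{uu}})$ this agrees with $w^{\mathrm{uu}}$ by the graph description in Theorem~\ref{WuuThm}, part~1). Continuity of $w$ follows because the continuous bijection $\pr|_{\overline{W}}: \overline{W} \to \pr(\overline{W})$ from a compact space onto a Hausdorff space is a homeomorphism, so its inverse, and hence $w$, is continuous.

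Part~(b) is the main obstacle, and I handle it via Brouwer's invariance of domain on $X^{\mathrm{uu}} \cong \R^2$. For each $t \geq 0$ consider
\[
G_t: B^*(0; \bar r; X^{\mathrm{uu}}) \to X^{\mathrm{uu}}, \qquad G_t(\varphi) := \pr\bigl(\Phi(t, \varphi + w^{\mathrm{uu}}(\varphi))\bigr).
\]
This map is continuous. Injectivity follows by composing three injections: the graph parametrization $\varphi \mapsto \varphi + w^{\mathrm{uu}}(\varphi)$ of $W^{\mathrm{uu}}(\bar r)$; the semiflow map $\Phi(t, \cdot)$, which is injective by backward uniqueness~\eqref{backunique}; and $\pr$ restricted to $\Phi(t, W^{\mathrm{uu}}(\bar r)) \subset \overline{W}$, injective by part~(a). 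Invariance of domain then yields that $G_t\bigl(B^*(0; \bar r; X^{\mathrm{uu}})\bigr) = \pr\bigl(\Phi(t, W^{\mathrm{uu}}(\bar r))\bigr)$ is open in $X^{\mathrm{uu}}$, and hence $\pr W = \bigcup_{t \geq 0} G_t\bigl(B^*(0; \bar r; X^{\mathrm{uu}})\bigr)$ is open.

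For part~(c), forward invariance of $\overline{W}$ is immediate: $W$ is forward invariant by its very definition, and continuity of $\Phi(s, \cdot)$ gives $\Phi(s, \overline{W}) \subseteq \overline{\Phi(s, W)} \subseteq \overline{W}$. For forward invariance of $\overline{W} \setminus W$, suppose towards a contradiction that $\psi \in \overline{W} \setminus W \subset \mathcal{A}$, $s \geq 0$, and $\Phi(s, \psi) = \Phi(t, \psi_0)$ with $\psi_0 \in W^{\mathrm{uu}}(\bar r)$ and $t \geq 0$. If $s \leq t$, backward uniqueness~\eqref{backunique} yields $\psi = \Phi(t-s, \psi_0) \in W$. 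If $s > t$, then $\psi = \Phi_{\mathcal{A}}(-(s-t), \psi_0)$ lies on the backward orbit of $\psi_0$, which by Theorem~\ref{WuuThm}, part~3), is contained in $W^{\mathrm{uu}}(\bar r)$ for times at most $T_1$, and is a forward $\Phi$-image of $\Phi_{\mathcal{A}}(T_1, \psi_0) \in W^{\mathrm{uu}}(\bar r)$ for intermediate times, so lies in $W$ in either case. Both alternatives contradict $\psi \notin W$.
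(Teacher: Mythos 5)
Your proof is correct and takes essentially the same route as the paper. The one cosmetic difference is in part~b): the paper fixes a point $y=\pr\Phi(t,x)\in\pr W$, passes to a small closed ball around $\pr(x)$ in $X^{\mathrm{uu}}$, and invokes the ``interior-to-interior'' form of invariance of domain (Zeidler, Thm.~16C) on that compact ball; you instead apply Brouwer's invariance of domain directly to the continuous injective maps $G_t$ on the open ball $B^*(0;\bar r;X^{\mathrm{uu}})\cong$ an open subset of $\R^2$ and then take a union over $t\geq 0$. Both arguments rest on the same two inputs — injectivity of $\pr$ on $\overline W$ from Lemma~\ref{pronW} (which is really what you use, not part~a) per se) and the two-dimensionality of $X^{\mathrm{uu}}$ — so this is a repackaging rather than a different idea. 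In part~c) the paper condenses your case split by first noting that every $\varphi\in W$ has its entire backward orbit inside $W$ (a consequence of Theorem~\ref{WuuThm}, parts~2)--3)), and then invokes backward uniqueness once; your version spells out the $s\leq t$ / $s>t$ cases explicitly, which is fine but a bit longer. Your preliminary observation that $\overline W\subset\mathcal A$ (hence compact, and $\Phi_{\mathcal A}$ is available) is also the paper's, so no gap there.
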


\begin{proof} Ad a):  It follows directly from injectivity  of \text{ pr}  on $\overline{W}$ (Lemma \ref{pronW}) that there exists a unique function $w: \text{ pr} (W) \to  X_3 \oplus Q$ with property  \eqref{globalgraph}.
From the  representation of $W^{\mathrm{uu}}(\bar r) \subset W$   as graph of
$ w^{\mathrm{uu}}: B^*(0; \bar r; X^{\mathrm{uu}})   \to B^*(0; \bar r; X_3 \oplus Q) $  in part 1)  of  Theorem \ref{WuuThm} it is also clear that  $w = w^{\mathrm{uu}} $ on  $B^*(0; \bar r; X^{\mathrm{uu}}) $.
The set $ \overline{W}$ is contained  in the compact attractor $ \mathcal{A}$ and hence compact. Since $\pr $ is injective on $ \overline{W}$, it defines a homeomorphism from $  \overline{W}$ to
$ \pr(\overline{W})$, the inverse of which  is given the map
$ \pr(\overline{W}) \ni \psi \mapsto \psi + w(\psi) $. Thus $\text{ id} + w$ is continuous on
$ \pr(\overline{W})$, and hence also $w$.


\medskip Ad b):  Assume now that $ y \in  \pr(W)$, so there exists $x = \psi^{\mathrm{uu}} + w^{\mathrm{uu}}(\psi^{\mathrm{uu}})  \in W^{\mathrm{uu}} (\bar r) $ and $t \geq 0 $ with
 $y =  \pr(\Phi(t,x))$.
There exists $ \delta > 0$  such that $\overline{B^*(\psi^{\mathrm{uu}}; \delta; X^{\mathrm{uu}})} \subset B^*( 0; \bar r; X^{\mathrm{uu}})$.
The backward uniqueness and injectivity of  \text{ pr} on $W$ imply that the continuous map
\[ h: \overline{B^*(\psi^{\mathrm{uu}}; \delta; X^{\mathrm{uu}})}  \to X^{\mathrm{uu}}, \; \psi \mapsto  \text{ pr } \Phi(t, \psi +  w^{\mathrm{uu}}(\psi))\]
is injective on the compact set $\overline{B^*(\psi^{\mathrm{uu}}; \delta; X^{\mathrm{uu}})}$, hence a homeomorphism onto its image.
Obviously we have $h(\psi^{\mathrm{uu}})  =  \pr(\Phi(t,x)) = y$, and the image of $h$ is contained in
$\pr(W)$. It follows from  Theorem  16C, p. 705 of \cite{Zeidler} (a consequence of the open mapping theorem)
that $h$  maps the interior of $\overline{B^*(\psi^{\mathrm{uu}}; \delta; X^{\mathrm{uu}})}$ in the topology of $X^{\mathrm{uu}}$
  (which is  $ B^*(\psi^{\mathrm{uu}}; \delta; X^{\mathrm{uu}})$) onto the interior of  its image  (again, in the topology of
$X^{\mathrm{uu}}$).  In particular, $y $ is an interior point of this image, which shows it is an interior point of
$\pr W$.

\medskip Ad c) :  Invariance  of $W$ is clear from the definition, which implies invariance of $ \overline{W}$ (we give the
short  argument  for  completeness):
If $  \psi \in \overline{W}$ and $ \theta \geq 0$,
  there exist sequences $(\varphi_n)$ in $W^{\mathrm{uu}}(\bar r)$  and $ (t_n) \subset [0, \infty)$ with $\Phi(t_n, \varphi_n)
 \to \psi$, and then   $ W \ni \Phi(t_n+ \theta,  \varphi_n) = \Phi(\theta, \Phi(t_n, \varphi_n)) \to
\Phi(\theta, \psi)$, so we have also $ \Phi(\theta, \psi) \in  \overline{W}$.
Finally, every  $\varphi \in W$ has a unique backward solution $  x^{\varphi} $ with all  states $x^{\varphi}_t \;(t \leq 0)$  in $W$. If $ \psi \in  \overline{W}\setminus W$ and $  \theta \geq 0$, we know already that $\tilde \psi:= \Phi(\theta,\psi) \in  \overline{W}$. If we had $\tilde \psi \in W$ then
$ \Phi(\theta, x^{\tilde \psi}_{-\theta})= \tilde \psi = \Phi(\theta, \psi)$ would contradict backward uniqueness, since $x^{\tilde \psi}_{-\theta} \in W$ but $\psi \not\in W$.
Hence we obtain that also $\Phi(\theta,\psi) \in  \overline{W}\setminus W$, proving forward invariance of this set.
\end{proof}

\bigskip
Let $W_{\mathrm{per}} $ denote the union of all non-constant periodic orbits in  $\overline{W}$ (we shall see later that there is
only one such orbit).  Then $  \pr(W \cup W_{\mathrm{per}}) \subset \pr(\overline{W})$.
 For every $\psi \in  \pr(W \cup W_{\mathrm{per}}) $ there exists a unique solution $x^{\psi}:\R \to \R^{N+1}  $
of equation \eqref{standardfb} with $  x^{\psi}_0 =   \psi$, and one has
$\psi = \pr(\psi)  + w(\pr(\psi))$.
These solutions define  a continuous flow \[ \tilde \Phi: \R \times  \pr(W \cup W_{\mathrm{per}}) \to \pr(W \cup W_{\mathrm{per}}) \;(\subset X^{\mathrm{uu}}). \]
The formula  \[ v(\varphi) :=  \pr[(\dot x^{\varphi + w(\varphi)})_0] \]
defines a   vector field  $v$ on $ \pr(W \cup W_{\mathrm{per}}) $.

\begin{prop} \label{vfinXuu}
\begin{enumerate}
\item[ a)] The above vector field $v: \pr(W \cup W_{\mathrm{per}})  \to X^{\mathrm{uu}}$ is continuous.

\item[b)]  Let $x:\R \to \R^{N+1} $ denote a solution of \eqref{standardfb}
with states $x_t$ in  $ W \cup W_{\mathrm{per}}$.
Then the curve  $c:\R \to X^{\mathrm{uu}}$  defined by $c(t) :=  \pr(x_t)$   satisfies
\[\dot c(t) = v(c(t)) \; ( t \in \R).\]
\end{enumerate}
\end{prop}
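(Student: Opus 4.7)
The plan is to reduce both parts to the fact that on the compact attractor $\mathcal{A}$, which contains $\overline{W}$ and hence $W \cup W_{\mathrm{per}}$, solutions depend $C^1$-continuously on their initial state, as asserted in Corollary \ref{attrsols}. Together with continuity of the graph map $w$ from Corollary \ref{graphrep}(a) and the boundedness of the linear projection $\pr$, both statements become essentially bookkeeping.

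For part a), I take $\varphi \in \pr(W \cup W_{\mathrm{per}})$ and set $\psi := \varphi + w(\varphi) \in \overline{W} \subset \mathcal{A}$. Corollary \ref{attrsols} with $n = 1$ gives that
\[ \mathcal{A} \ni \psi \mapsto x^{\psi}\resr{[-\tau,\tau]} \in (C^1([-\tau,\tau], \R^{N+1}), ||\cdot||_{C^1})\]
is continuous. Hence the past segment $\dot x^{\psi}_0\resr{[-\tau, 0]}$ depends continuously in $C^0$ on $\psi$, and the finite-dimensional values $\dot x^{\psi}_j(0)$ ($1 \leq j \leq N$) vary continuously as well; so $\psi \mapsto (\dot x^{\psi})_0 \in \mathbb{X}$ is continuous. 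Composing with the continuous map $\varphi \mapsto \varphi + w(\varphi)$ and with the bounded linear $\pr$ yields continuity of $v$.

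For part b), fix a solution $x$ as in the hypothesis. Every segment $x_t$ lies in $\overline{W} = \graph(w)$, so the graph description forces $x_t = \pr(x_t) + w(\pr(x_t)) = c(t) + w(c(t))$. By uniqueness of the initial value problem for \eqref{standardfb}, the solution starting at time $0$ with state $c(t) + w(c(t)) = x_t$ is $s \mapsto x(t+s)$; reading this componentwise at $s = 0$ gives
\[ (\dot x^{c(t)+w(c(t))})_0 = (\dot x)_t, \]
where $(\dot x)_t$ denotes the element of $\mathbb{X}$ built from $\dot x_0(t + \cdot)\resr{[-\tau,0]}$ and the values $\dot x_j(t)$ for $j = 1,\ldots,N$. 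Hence $v(c(t)) = \pr[(\dot x)_t]$. On the other hand, since $t \mapsto x_t$ is a $C^1$ curve into $\mathbb{X}$ with derivative $(\dot x)_t$ and $\pr \in L_c(\mathbb{X}, \mathbb{X})$ is bounded linear, I can exchange derivative and projection to get $\dot c(t) = \pr[(\dot x)_t]$, which matches $v(c(t))$.

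The one step that is not pure composition of continuous maps is the identity $\frac{d}{dt} x_t = (\dot x)_t$ in the $C^0$-norm of $\mathbb{X}$; this is what I expect to be the minor technical obstacle. For the $\R^N$-valued components it is immediate, and for the first component it reduces to showing that $h^{-1}[x_0(t+h+\cdot) - x_0(t+\cdot)] \to \dot x_0(t+\cdot)$ uniformly on $[-\tau, 0]$ as $h \to 0$; since $x$ is $C^1$ on all of $\R$ (being a solution defined for all times, via the extension of the semiflow in Proposition \ref{attrflow}), this follows at once from the uniform continuity of $\dot x_0$ on the compact interval $[t - \tau - 1, t + 1]$.
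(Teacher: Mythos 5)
Your proposal is correct and follows essentially the same path as the paper: part a) reduces to the $C^1$-continuity from Corollary \ref{attrsols} together with continuity of $w$ and boundedness of $\pr$, and part b) identifies $x_t$ with $c(t)+w(c(t))$ via the graph representation and then interchanges $\frac{d}{dt}$ with $\pr$. The only small difference is that you unpack and justify the identity $\frac{d}{dt}\, x_t = (\dot x)_t$ in $\mathbb{X}$ in some detail, whereas the paper simply asserts it; this is a mild strengthening in exposition, not a change of method.
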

\begin{proof} Ad a): It follows from Corollary \ref{attrsols}  that, in  particular,
the map
\begin{equation}  (W \cup W_{\mathrm{per}},  ||\; ||_{C^0})  \ni \chi \mapsto x^{\chi}\resr{[-\tau, 0] } \in
(C^1([-\tau, 0], \R^{N+1}), ||\; ||_{C^1}) \text{ is continuous.}
\label{C1}\end{equation}
Now for $ \varphi \in   \text{ pr }(W \cup W_{\mathrm{per}}) $ we have with $\chi := \varphi + w( \varphi) = [\chi_0, \chi_1(0),   \ldots , \chi_N(0)] \in \mathbb{X} $ that
$\chi_0 \in C^1([-\tau, 0], \R)$, and
$(\dot x^{\varphi + w(\varphi)})_0 = (\dot x^{\chi})_0 = [\dot\chi_0, \dot \chi_1(0),   \ldots , \dot \chi_N(0)] \in \mathbb{X}$, which in view of    \eqref{C1} and the continuity of $w$ implies that the last expression depends continuously on $ \varphi$. With the continuity of
$\text{ pr}$, the continuity of $v$ follows.

\medskip Ad b): For a solution $x$  and the corresponding  curve $c$ as in assertion b),
the `phase curve' $ t \mapsto x_t \in \mathbb{X}$ is differentiable, with
$ \frac{d}{dt}\resr{t = t_0} x_t = (\dot x)_{t_0}  $ for $t_0 \in \R$.
Hence,  for   $ t_0 \in \R$ we obtain  with
$ \varphi := c(t_0) =   \text{ pr } x_{t_0}, \;\chi := \varphi + w(\varphi) = x_{t_0} $, using the chain rule:
\[\begin{aligned} \dot c(t_0) & = \frac{d}{dt}\resr{t = t_0} \text{ pr } x_t =    \text{ pr} [(\dot x)_{t_0}] =
   \text{ pr} [(\dot x(t_0 + \cdot))_0] =   \text{ pr} [(\dot x^{\chi})_0] =   \text{ pr} [(\dot x^{\varphi + w(\varphi)})_0]\\
& = v(\varphi) = v(c(t_0)). \end{aligned}\]
\end{proof}

\begin{lem} \label{omegaper}
\begin{enumerate}
\item[ a)] For all $\psi \in  W\setminus\{0\}$, the $\omega$-limit set  $\omega(\psi) $ (under the semiflow $ \Phi$)  consists  of a periodic orbit
\[ \mathcal{O}^{\psi} =  \menge{ p^{\psi}_t}{t \in \R} \] which corresponds to a
non-constant  periodic solution $ p: \R \to \R^{N+1} $  of equation  \eqref{standardfb}, and satisfies
$ \mathcal{O}^{\psi} \subset \Sigma \cap (\overline{W}\setminus W$).

\item[ b)] For every periodic orbit $  \mathcal{O} =  \mathcal{O}^{\psi}$ as in a),   the domain of attraction\\
$ \menge{\varphi \in W \setminus\{0\}}{\omega(\varphi) =   \mathcal{O}}$ is open in the  relative topology of
$ W \setminus\{0\}$.
\end{enumerate}

\end{lem}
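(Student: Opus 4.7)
For part (a), I would first record that $\omega(\psi)$ is a non-empty, compact, connected, fully $\Phi_{\mathcal A}$-invariant subset of the attractor, and is contained in $\overline W$ by forward invariance (Corollary \ref{graphrep}). From $\psi\in\Sigma$ (Lemma \ref{pronW}), the monotonicity of $V$ along the semiflow, and $V\geq 1$ on $\mathbb X\setminus\{0\}$, I obtain $V(x^\psi_t)\equiv 1$ for all $t\geq 0$; lower semicontinuity of $V$ then yields $\omega(\psi)\subset\overline\Sigma=\Sigma\cup\{0\}$. The Mallet-Paret-Sell Poincar\'e-Bendixson theorem of \cite{MPS2}, combined with \textbf{(A1)}, leaves only two possibilities for $\omega(\psi)$: the singleton $\{0\}$ or a non-constant periodic orbit. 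I would exclude the singleton via Proposition \ref{vfinXuu}: the planar curve $c(t):=\pr(x^\psi_t)$ satisfies $\dot c=v(c)$ on $X^{\mathrm{uu}}$, and the linearization of $v$ at $0$ has spectrum $\{\sigma_0\pm i\omega\}$ with $\sigma_0>0$, making $0$ a repelling spiral from which no nontrivial forward orbit can return. The inclusion $\mathcal O^\psi\subset\overline W\setminus W$ is then forced: any $\chi\in\mathcal O^\psi\cap W$ would, by Theorem \ref{WuuThm} part 3), admit a backward orbit in $W$ tending to $0$, so $\Phi_{\mathcal A}$-invariance and closedness of $\omega(\psi)$ would place $0\in\omega(\psi)$, contradicting non-constancy.

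For part (b), I would pass to a planar picture via $\pr$, which by Corollary \ref{graphrep} restricts to a homeomorphism $\overline W\to\pr(\overline W)\subset X^{\mathrm{uu}}$ conjugating $\Phi_{\mathcal A}|_{\overline W}$ to a continuous planar flow having $0$ as its only equilibrium. Set $\Gamma:=\pr(\mathcal O)$ and, for each $\chi'\in W\setminus\{0\}$, $\Gamma^{\chi'}:=\pr(\mathcal O^{\chi'})$ (well defined by part (a)). Each such Jordan curve encloses $0$ (standard planar dynamics: the bounded complementary component of a periodic orbit of a planar continuous flow must contain an equilibrium, necessarily $0$ under \textbf{(A1)}), so distinct such curves are nested. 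I would then prove $\Gamma^{\chi'}=\Gamma$ for every $\chi'\in W\setminus\{0\}$: if $\Gamma^{\chi'}$ were strictly inside $\Gamma$, the planar curve $t\mapsto\pr(x^\psi_t)$ continuously joins its $\alpha$-limit $0$ (inside $\Gamma^{\chi'}$) to its $\omega$-limit $\Gamma$ (outside $\Gamma^{\chi'}$) and would therefore meet $\Gamma^{\chi'}$ by connectedness, contradicting injectivity of $\pr$ on $\overline W$ combined with $\{x^\psi_t:t\in\R\}\subset W$ versus $\mathcal O^{\chi'}\subset\overline W\setminus W$. The symmetric case ($\Gamma^{\chi'}$ strictly outside $\Gamma$) is ruled out by the analogous argument applied to the planar orbit of $\chi'$ itself, whose backward extension in $W$ (Theorem \ref{WuuThm}) starts at $0$, inside $\Gamma$. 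Hence $\mathcal O^{\chi'}=\mathcal O$ for every $\chi'\in W\setminus\{0\}$, the basin equals all of $W\setminus\{0\}$, and openness in the relative topology is immediate. (This gives uniqueness of the periodic orbit in $\overline W$ as a bonus, anticipating the main theorem of Section 4.)

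The principal obstacle is justifying that the planar toolkit --- Jordan curve theorem, orbit non-crossing, and an index argument placing $0$ inside every periodic orbit --- genuinely applies on $\pr(\overline W)$, given that the vector field $v$ of Proposition \ref{vfinXuu} is only asserted to be continuous. This is overcome by exploiting Corollary \ref{graphrep}: the flow on $\pr(\overline W)$ is inherited via the homeomorphism $\pr$ from the continuous flow $\Phi_{\mathcal A}$ on $\overline W$ (Proposition \ref{attrflow}), so orbit uniqueness and the group property come directly from backward uniqueness \eqref{backunique} of \eqref{standardfb}, with no Lipschitz regularity of $v$ needed. The index step requires a Brouwer-type argument on the time-one planar map restricted to the bounded disc bounded by the Jordan curve, together with the uniqueness of equilibria under \textbf{(A1)}; this, rather than the Jordan-curve separation itself, is the most delicate ingredient in the reduction.
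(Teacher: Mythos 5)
For part a) your overall route agrees with the paper's, but the step that excludes $0$ from $\omega(\psi)$ is not rigorous as stated: you linearize the projected field $v$ at $0$, yet Proposition \ref{vfinXuu} only asserts that $v$ is continuous, so a classical linearization with spectrum $\{\sigma_0\pm i\omega\}$ is not available without further work. The paper sidesteps this by building a Lyapunov function $U=q\circ\pr$ with $q$ a quadratic form on $X^{\mathrm{uu}}$ and obtaining strict positivity of $\frac{d}{dt}U(\Phi(t,\chi))\big|_{t=0}$ on $W^{\mathrm{uu}}(r^*)$ from the time-regularity of the semiflow, which needs no smoothness of $v$; your ``repelling spiral'' heuristic is the right picture but would have to be rephrased along those lines. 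Also be careful that Theorem 2.1 of \cite{MPS2} allows $\omega$-limit sets that contain $0$ together with connecting orbits, not merely the singleton $\{0\}$, so the correct thing to exclude is $0\in\omega(\psi)$; your argument still works for that, but the phrasing should be adjusted.

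For part b) your proof aims for the stronger claim $\mathcal{O}^{\chi'}=\mathcal{O}$ for all $\chi'\in W\setminus\{0\}$, which would make openness trivial, but there is a genuine hole at exactly the step you single out: that the projection of every periodic orbit in $\overline W$ encloses $0$. A Brouwer/index argument on the time-one planar map needs that map (or the flow $\tilde\Phi$) to be defined on the entire bounded disc cut out by the Jordan curve, but at this stage the induced planar flow is defined only on $\pr(W\cup W_{\mathrm{per}})$, which a priori could be a thin annular set; that $\pr W$ equals the whole interior of $\Gamma$ is part d) of Theorem \ref{PerSolMfd}, proved afterwards and itself using the present lemma, so invoking it here would be circular. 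The paper's proof is built precisely around this obstruction: it runs the transversal-section construction not in the full compact regions $R_j$ bounded by spiral arcs, the transversal $L$, and $\Gamma$, but in the intersections $\tilde R_j := R_j\cap\pr(W\cup W_{\mathrm{per}})$, uses only that these smaller sets are forward invariant under $\tilde\Phi$, and concludes merely that the basin of $\mathcal{O}^\psi$ is open; the upgrade to a global basin is then deferred to Theorem \ref{PerSolMfd} b) via connectedness of $W\setminus\{0\}$.
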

\begin{proof}  Ad a):  The Poincar\'e-Bendixson theorem from \cite{MPS2} (Theorem 2.1, p. 447) implies that for all $\psi \in  W\setminus\{0\}$, the $\omega$-limit set  $\omega(\psi) $ is either  a single non-constant periodic orbit or contains an equilibrium.  Now, from  assumption (A1), system \eqref{standardfb} has only the zero equilibrium. There exists $r^* \in (0, \bar r]$ and  a  $C^1$-function  $U : W^{\mathrm{uu}}(r^*)  \to \R$ such that
$U(0) = 0$ and
\begin{equation} \forall  \chi \in  W^{\mathrm{uu}}(r^*): \; \frac{d}{dt}\resr{t = 0} U(\Phi(t, \chi)) > 0.
\label{Ufunction} \end{equation}
(With a  suitable quadratic form $q$  on $X^{\mathrm{uu}}, \; U :=  q\circ \text{pr }$ can be taken; compare
Lemma 5.9 and Lemma 5.10 on p. 256 of \cite{DiekmannEtAl}).

 Assume  $0 \in \omega(\psi) $ for some   $ \psi$ in $W\setminus\{0\}$.
Then  $\Phi(t_n, \psi) \to 0$ for some sequence  $(t_n)$ with $t_n \to \infty$,
and (due to backward uniqueness of the zero solution, see \eqref{backunique}):
$\Phi(t_n, \psi) \in W\setminus\{0\} \subset \Sigma$. Since
 $\text{ pr}(W^{\mathrm{uu}}(r^*))  = B^*(0; r^*; X^{\mathrm{uu}})$ is  a neighborhood of zero in $X^{\mathrm{uu}}$ and
 $\text{ pr}$ is injective on $W$,  we  would have
 $\Phi(t_n, \psi) \in W^{\mathrm{uu}}(r^*)$ for all large enough $n$. Also, $U(\Phi(t_n, \psi)) \to U(0) = 0$,
and we  can also find a sequence   $ (s_n)$ of time such that $ s_n \to \infty$ and
 $\Phi(s_n, \psi)  \in W^{\mathrm{uu}}(r^*)$, with  $ \frac{d}{dt}\resr{t = s_n} U[\Phi(t, \psi)] =
\frac{d}{dt}\resr{t = 0} U[\Phi(t, \Phi(s_n,\psi))]
 \leq  0$.
This contradicts  \eqref{Ufunction}.
Hence  $\omega(\psi) $ is a single (non-constant) periodic orbit  $\mathcal{O}^{\psi}$ for all $ \psi \in W\setminus\{0\}$. Obviously $ \mathcal{O}^{\psi} \subset  \overline{W}\setminus\{0\}$, and hence
(see Lemma \ref{pronW})  $\mathcal{O}^{\psi} \subset  \Sigma$. Backward uniqueness
and convergence of all solutions with segments in $W$ to zero for  $ t \to - \infty$
imply that $\mathcal{O}^{\psi} \cap W = \emptyset$, so $\mathcal{O}^{\psi} \subset  \overline{W}\setminus W$.

\medskip Ad b): Consider a periodic orbit $  \mathcal{O} =  \mathcal{O}^{\psi} =  \menge{ p^{\psi}_t}{t \in \R}$  as in a), and let $T >0$ be the minimal period of the periodic solution $ p^{\psi}$.
  Then $\gamma:   [0, T] \ni t \mapsto   \text{ pr } p_t \in X^{\mathrm{uu}} $ defines a   simple closed $C^1$-curve
with  the image $\Gamma = \text{pr } \mathcal{O}$. The curve  $\gamma$ is an integral curve of the vector field $v$ from  Proposition \ref{vfinXuu}. $ \Gamma$  has an interior $\text{ int}(\Gamma)) $  and an exterior $\text{ext}(\Gamma) $ in the  two-dimensional space $X^{\mathrm{uu}}$, in  the sense of the Jordan curve theorem (the two connected components of $X^{\mathrm{uu}}\setminus \Gamma)$). Since   $W$ is connected,  the open set  $\text{ pr }W$ is also connected and    disjoint  to $ \Gamma$, so  $\text{ pr }W$ must be contained  in either
 $\text{ int}(\Gamma) $ or  $\text{ ext}(\Gamma) $.  (Recall that $\text{ pr }$ is injective
on $\overline{W}$, Lemma \ref{pronW}.)
Let $\mathcal{K}(\Gamma) $  be the one of these two sets which contains $\text{ pr }W$.

Continuity of the vector field $v$ and of the flow $\tilde \Phi$ permit us to choose a line segment  $L$  transversal to $\Gamma$ at  $\gamma(0)$,
of the form $  L = \gamma(0) + [0, \varepsilon) \cdot u$ with some $ \varepsilon >0$ and  $ u \in X^{\mathrm{uu}}\setminus \dot \gamma(0)$, and with the following properties:
\begin{enumerate} \item[(i)]   $ L\cap \Gamma = \{\gamma(0) \}$ and $L\setminus\{\gamma(0)\} $ lies  in $\mathcal{K}(\Gamma) $ (the same connected component of $X^{\mathrm{uu}}\setminus \Gamma$ as $\text{ pr }W$);

\item[(ii)] the  vector field $v$ is transversal to $L$ at every  point $\chi \in
 L \cap  \text{ pr }(W \cup W_{\mathrm{per}}) $;

\item[(iii)] there exists a neighborhood $M$ of $ \gamma(0) $  in
$ \text{ pr }(W \cup W_{\mathrm{per}})$ such that for every point $ \chi \in L \cap M$  there exists a unique time $\theta(\chi)$
with a value approximately  equal to  $T$ (the  minimal period of $\mathcal{O}$) such that
$\tilde \Phi(\theta(\chi), \chi) \in L$, and $ \theta(\chi) $ is the first positive time with this property.
\end{enumerate}
(Establishing property (iii) is similar to the usual construction of a return map, except that here it works only for points in
$\text{ pr }(W \cup W_{\mathrm{per}}) $, and we cannot presently know if  all points of $L$ are contained in this set.)

 Recall that $\omega(\psi)=\mathcal{O}$, and let $x^{\psi}: \R\to\R^{N+1}$ be the solution  with initial state $x^{\psi}_0 = \psi$,
 so $x^{\psi}_t  \to \mathcal{O}\;(t \to \infty)$.  Consider the
$C^1$-curve $c^\psi: (-\infty, \infty)  \to X^{\mathrm{uu}} $  given by $ c^{\psi}(t) = \text{ pr } x^{\psi}_t $, which satisfies $c^\psi(t) \to \Gamma $ for $ t \to \infty$ and
$c^\psi(t) \to 0 \in X^{\mathrm{uu}} \; (t \to -\infty)$.
From Proposition \ref {vfinXuu}) $\dot c^{\psi}(t) = v(c^{\psi}(t))$.
An argument known from classical Poincar\'e-Bendixson theory implies that  $c^\psi$ must intersect $L$ in a  sequence of points of the form $ c^{\psi}(t_j) = \gamma(0) + s_j u, \;j = 1,2,3,  \ldots $, where $ t_j < t_{j + 1},\; t_j \to \infty$,  $  s_j  \in (0, \varepsilon) $ for  all $j$,  $(s_j)$  is strictly monotonic,   and  $s_j \to 0 \; ( j \to \infty)$.  Hence we have $s_{j+1} < s_j $.  Since $M$ is a neighborhood of $ \gamma(0) $ in $ \text{ pr }(W \cup W_{\mathrm{per}})$, we can assume that
$ \gamma(0) + [0, s_1] \cdot u \subset  M $.
For every $  j \in \N$, consider   the compact set  $R_j$  in $X^{\mathrm{uu}} $ bounded  by  the curve segment $  c^{\psi}([t_j, t_{j+1}])$, by the line segment $\gamma(0) + [s_{j+1}, , s_{j}]\cdot v $  and by $\Gamma$.
We do  not know at present that $ R_j \subset  \text{pr } \overline{W}$, but the
 set $\tilde R_j := R_j \cap  \text{ pr }(W \cup W_{\mathrm{per}}) $  is forward invariant under the flow  $\tilde\Phi$, and we have
\begin{equation} \tilde R_j \supset \tilde R_{j+1}, \quad  \text{ and (in view of (iii) above) }
L \cap \tilde R_1 \subset  \gamma(0) + [0, s_1]\cdot u \subset M. \label{LinM} \end{equation}

\begin{figure}[htbp]
  \includegraphics[scale = 1.5]{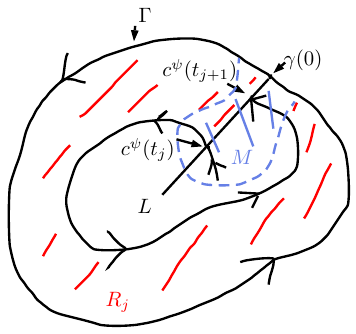}
    \caption{The  argument of classical Poincar\'e-Bendixson-type (the sets $R_j$ and $ M$ indicated)}
     \label{PoincBendFig}
\end{figure}
Now  continuity of the semiflow and  transversality of $v$ to $L$  imply that for nonzero  $\tilde \psi \neq \psi  $  in $W$ sufficiently close to $ \psi$, the corresponding curve $c^{\tilde \psi} $ also has to intersect $L$ at a point  $c^{\tilde \psi} (\tilde t_2) \in M\cap L$ (with $\tilde t_2 $ close to $t_2$), such that
 $c^{\tilde \psi} (\tilde t_2) =   \gamma(0) + \tilde s_{2}v$, where  $s_1 > \tilde s_2  >  s_3$, and $  \tilde s_2 \neq s_2$.
If $  \tilde s_2  > s_2  $ then  $c^{\tilde \psi} (t_2) \in \tilde R_1 $, and if $ \tilde s_2 \leq s_2$ we  even  have $c^{\tilde \psi} (t_2) \in \tilde R_2 $,  in any case $ c^{\tilde \psi} (t_2) \in \tilde R_1 $. Invariance of this set  implies

\begin{equation} c^{\tilde \psi} (t) \in \tilde R_1 \text{  for  all }  t \geq t_2.
\label{invR1}
\end{equation}
Property (iii) of $M$ shows that there exists $\tilde t_3 > \tilde t_2$
(approximately equal to $ \tilde t_2 + T$) with
   $c^{\tilde \psi} ((\tilde t_2, \tilde t_3)) \cap L = \emptyset$
 and   $c^{\tilde \psi} (\tilde t_3) = \gamma(0) + \tilde s_3 \cdot u  \in L$. Since also
$c^{\tilde \psi} (\tilde t_3)  \in \tilde R_1 $, we see from \eqref{LinM}  that we must
 also have $c^{\tilde \psi} (\tilde t_3) \in M$, and $ \tilde s_3 \in [0, s_1]$.
Backward uniqueness for the semiflow $ \Phi$ and injectivity of $\text { pr }$ on
$\overline{W} $ imply that $\tilde s_3 \not \in \{0, s_1, s_2, s_3,  \ldots \} $.
If $ \tilde s_3 \in (s_2, s_1) $ then the construction of $ R_1$ would imply
$c^{\tilde \psi}(t) \not \in R_1 $ for  $ t < \tilde t_3$ close to $t_3$, contradicting
\eqref{invR1}.  Hence we obtain $ \tilde s_3 < s_2$.
Proceeding inductively, analogous arguments give  sequences $(\tilde t_j) $
with $ \tilde t_j < \tilde t_{j+1} \to \infty $  and $(\tilde s_j) \subset (0,s_1)$ such that
 $ c^{\tilde \psi}(t_j) = \gamma(0) + \tilde s_j\cdot u  \in \tilde R_{j-1} \; (j \geq 2)$ and $\tilde s_{j+1} < s_j$.  We conclude  from $s_j \to 0$ that also $ \tilde s_j \to 0$. It follows that $c^{\tilde \psi}(\tilde t_j) \to \gamma(0)$, which implies
$\Phi(\tilde t_j, \tilde \psi) \to p^{\psi}_0 \; (j \to\infty)$. Now the already proved assertion a) shows  that also  $\omega(\tilde \psi) =    \mathcal{O}^{\psi}$ for
$\tilde \psi \in W$ close enough to $ \psi$, which proves b).
\end{proof}

\begin{rem}
  The above argument  showing that the domains of attraction to periodic orbits are open
is similar to the corresponding consideration in part (b) of (24.8), p. 3in \cite{Amann}, where
`one-sided asymptotic stability' of limit cycles is proved. One difference is that in the situation of
Lemma \ref{omegaper} above we do  not know (at this point) that there is a  curve of the projected semiflow through every point  on the `attractive side' of $ \Gamma$. This corresponds to the
distinction between $R_j$ and $\tilde R_j$ above.
\end{rem}

\section {The main theorem}
As a preparation, we need  the following very simple topological fact.

\begin{prop} \label{topprop}
Let $X$ be  a connected topological space and $ U\subset X$ a nonempty and   open set, $U \neq X$. Then, for the boundary of $U$,
one has $ \partial U \neq \emptyset$.
\end{prop}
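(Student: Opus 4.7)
The plan is to argue by contradiction, using the standard characterization of the boundary together with the fact that connected spaces admit no proper nonempty clopen subsets.

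First I would recall that for any subset $U\subset X$ one has $\partial U = \overline{U}\setminus \mathrm{int}(U)$. Since $U$ is open, $\mathrm{int}(U)=U$, so $\partial U = \overline{U}\setminus U$. Assuming for contradiction that $\partial U = \emptyset$, this yields $\overline{U}=U$, i.e., $U$ is closed as well as open.

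Next I would invoke connectedness of $X$: the only clopen subsets of a connected space are $\emptyset$ and $X$ itself. Since $U$ is nonempty by hypothesis, we conclude $U=X$, contradicting the assumption $U\neq X$. Hence $\partial U\neq \emptyset$.

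There is no real obstacle here; the statement is an immediate consequence of the definition of connectedness and the identification $\partial U = \overline{U}\setminus U$ for open $U$. The only thing to be careful about is to use the correct form of the boundary, namely $\partial U=\overline{U}\cap\overline{X\setminus U}$, which for open $U$ simplifies as above and makes the clopen argument transparent.
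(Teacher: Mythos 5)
Your proof is correct and takes essentially the same route as the paper: from $\partial U=\emptyset$ deduce that $U$ is clopen, then invoke connectedness. You simply spell out the step $\partial U = \overline{U}\setminus U$ (valid since $U$ is open) a bit more explicitly than the paper does.
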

\begin{proof}  $ \partial U =  \emptyset$ would imply that $U$ is a nonempty,   open and closed proper subset  of $X$, which would then hold also for the complement of $U$ in $X$, contradicting connectedness of $X$.
\end{proof}

Combining the previous results  we can now  obtain a higher-dimensional version of Theorem 10.1 from  \cite{Wal91}.
We assume that  the nonlinearities  in  system \eqref{standardfb} (with negative feedback) are of class $C^1$,   that  zero is the only equilibrium, and that the linearization at zero has a leading  conjugate pair of eigenvalues in the right half plane (i.e., assumption {\textbf{(A1)}} from Section 2 holds). Recall the  corresponding two-dimensional  eigenspace $X^{\mathrm{uu}}$ (a subspace of the state  space $\mathbb{X}$) and the spectral projection
$ \pr $ onto  $X^{\mathrm{uu}}$. Then the following theorem is valid.

 \begin{thm} \label{PerSolMfd}
 \begin{enumerate}
\item[a)] The  two-dimensional invariant manifold  $W\!,$  obtained  by extending the local strong stable manifold   $W^{\mathrm{uu}}(\bar r)$  with  the semiflow,  is globally a graph of a  continuous function $w$ defined on  $ \pr(W)$, which  is an  open subset  of $X^{\mathrm{uu}}$.  With the  closure  $\overline{W} $ of $W$, the set $\overline{W}\setminus\{0\}$   is contained in the set $ \Sigma$ of states in $\mathbb{X}$ where the discrete Lyapunov functional takes the value 1.

\item[b)] There exists a   periodic solution  $p:\R \to \R^{N+1} $ of \eqref{standardfb} such that   the corresponding  orbit $ \mathcal{O} := \menge{p_t}{ t\in \R} \subset \mathbb{X}$ is the $\omega$-limit set
for  all nonzero initial states in   $ W$.

\item[c)] For the `boundary' of $W$ we  have $  \overline{W}\setminus W = \mathcal{O}. $

\item[d)] The projection $\pr(W)$  of $W$ to the two-dimensional space $X^{\mathrm{uu}}$ coincides  with the interior of the simple closed curve $\gamma $ given by $ \gamma(t) := \pr(p_t)$ in the sense of the Jordan curve theorem.
\end{enumerate}
\end{thm}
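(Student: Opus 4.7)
Part (a) follows immediately from the earlier work: Corollary \ref{graphrep} provides the continuous function $w$ on the open set $\pr W \subset X^{\mathrm{uu}}$ whose graph is $\overline{W}$, and Lemma \ref{pronW} gives $\overline{W}\setminus\{0\} \subset \Sigma$.

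For part (b) I would first observe that $W\setminus\{0\}$ is connected. The local manifold $W^{\mathrm{uu}}(\bar r)\setminus\{0\}$ is a punctured two-dimensional disk (the graph of $w^{\mathrm{uu}}$ over $B^*(0;\bar r; X^{\mathrm{uu}})\setminus\{0\}$), hence connected, and for every $\psi \in W^{\mathrm{uu}}(\bar r)\setminus\{0\}$ the continuous orbit $\{\Phi(s,\psi) : 0 \leq s \leq t\}$ joins $\psi$ to any $\Phi(t,\psi) \in W\setminus\{0\}$ while avoiding zero, by backward uniqueness \eqref{backunique}. Lemma \ref{omegaper}(a) then furnishes, for every $\psi \in W\setminus\{0\}$, an $\omega$-limit $\mathcal{O}^\psi$ that is a non-constant periodic orbit, and Lemma \ref{omegaper}(b) makes each basin of attraction open in $W\setminus\{0\}$. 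Distinct basins are disjoint, and together they cover $W\setminus\{0\}$; connectedness forces a single orbit $\mathcal{O}$.

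Parts (c) and (d) I would prove jointly. The first step is to generalise Lemma \ref{omegaper}(a) to any $\psi \in \overline{W}\setminus W$: by forward invariance of $\overline{W}\setminus W$ from Corollary \ref{graphrep}(c), a hypothetical sequence $\Phi(t_n,\psi) \to 0$ would, by the graph representation and $w = w^{\mathrm{uu}}$ near $0$, force $\Phi(t_n,\psi) \in W^{\mathrm{uu}}(\bar r) \subset W$ for large $n$, a contradiction; hence Poincar\'e--Bendixson from \cite{MPS2} gives $\omega(\psi)$ is a non-constant periodic orbit in $\overline{W}\setminus W$. Next I would show $0 \in \text{int}(\Gamma)$: by Proposition \ref{vfinXuu} the vector field $v$ on $\pr W$ coincides near $0$ with the linearisation, whose eigenvalues $\sigma_0 \pm i\omega$ have nonzero imaginary part, so curves $c^\psi(t) = \pr\Phi(t,\psi)$ wind around $0$ as they accumulate on $\Gamma$, forcing a nonzero winding number of $\Gamma$ about $0$. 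Since $\pr W$ is open, connected, contains a neighbourhood of $0$, and is disjoint from $\Gamma$, one obtains $\pr W \subset \text{int}(\Gamma)$, and taking closures $\pr \overline{W} \subset \overline{\text{int}(\Gamma)} = \text{int}(\Gamma) \cup \Gamma$.

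The main obstacle is the reverse inclusion $\text{int}(\Gamma) \subset \pr W$, equivalently uniqueness of periodic orbits inside $\overline{W}$ (not merely inside $W\setminus\{0\}$). Suppose a second non-constant periodic orbit $\mathcal{O}' \subset \overline{W}\setminus W$ existed, projecting to a simple closed curve $\Gamma' \subset \pr\overline{W} \subset \text{int}(\Gamma) \cup \Gamma$ disjoint from $\Gamma$. Then $\Gamma$ and $\Gamma'$ are disjoint simple closed curves both contained in $\overline{\text{int}(\Gamma)}$, so one must lie strictly inside the other; combining the established inclusions $\pr W \subset \text{int}(\Gamma)$, $\Gamma \cup \Gamma' \subset \overline{\pr W} \subset \text{int}(\Gamma) \cup \Gamma$, and the fact that $\pr W$ contains a neighbourhood of $0$, both cases yield a planar contradiction (the case $\Gamma \subsetneq \text{int}(\Gamma')$ contradicts $\pr\overline{W} \subset \text{int}(\Gamma) \cup \Gamma$, while $\Gamma' \subsetneq \text{int}(\Gamma)$ with $0 \in \text{int}(\Gamma')$ contradicts $\pr W$ accumulating on $\Gamma$ from inside). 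Excluding such orbits forces uniqueness, and then the extended $\omega$-limit argument gives $\omega(\chi) = \mathcal{O}$ for every $\chi \in \overline{W}\setminus W$. An open-and-closed argument in the connected set $\text{int}(\Gamma)$ now yields $\pr W = \text{int}(\Gamma)$: openness is immediate, and closedness follows because any sequence $y_n \in \pr W$ with $y_n \to y \in \text{int}(\Gamma)$ produces $y = \pr\chi$ with $\chi \in \overline{W}$ (by compactness of $\pr\overline{W}$), and $\chi \in \overline{W}\setminus W$ would give $\chi \in \mathcal{O}$, hence $y \in \Gamma$, contradicting $y \in \text{int}(\Gamma)$. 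Finally $\pr(\overline{W}\setminus W) = \pr\overline{W}\setminus\pr W = \Gamma = \pr \mathcal{O}$, and injectivity of $\pr$ on $\overline{W}$ (Lemma \ref{pronW}) yields $\overline{W}\setminus W = \mathcal{O}$, completing (c) and (d) simultaneously.
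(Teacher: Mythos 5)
Your handling of parts (a) and (b) is essentially the paper's argument; in (b) you are slightly more careful in spelling out why $W\setminus\{0\}$ is connected, which the paper leaves implicit, and that is a small improvement rather than a different route.

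Your joint treatment of parts (c) and (d) departs substantially from the paper, and it has a genuine gap. The paper proves (c) directly and first: every flow line in $W\setminus\{0\}$ crosses the compact ``circle'' $S := \{\varphi + w(\varphi) : \varphi \in X^{\mathrm{uu}},\, ||\varphi||_* = \bar r/2\}\subset W$; $S$ lies in the basin of $\mathcal{O}$, which is compactly (hence uniformly) attracted; given $\chi \in \overline{W}\setminus W$ as a limit of $\Phi(t_n,\psi_n)$ with $\psi_n \in S$, one shows $t_n \to \infty$ and therefore $\chi\in\mathcal{O}$. Once (c) is in hand, (d) follows by a clean topological argument using Proposition~\ref{topprop}. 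You instead try to establish (d) (via $0\in\mathrm{int}(\Gamma)$ and Jordan-curve bookkeeping) before (c), and you deduce (c) from uniqueness of periodic orbits in all of $\overline{W}$. The uniqueness step is where the argument breaks down: after observing $\Gamma'\subset\mathrm{int}(\Gamma)\cup\Gamma$ and that $\Gamma,\Gamma'$ are disjoint, you dismiss the case $\Gamma'\subsetneq\mathrm{int}(\Gamma)$ by invoking ``$0\in\mathrm{int}(\Gamma')$,'' but nothing you have proved forces the projection of a hypothetical second orbit to enclose the origin. Appealing to the classical index fact that a periodic orbit of a planar vector field encircles a fixed point is also not available here, because the vector field $v$ is only defined on $\pr(W\cup W_{\mathrm{per}})$ and you do not know that $\mathrm{int}(\Gamma')$ lies in this set. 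Moreover, your ``extended $\omega$-limit argument'' for $\chi\in\overline{W}\setminus W$ needs uniqueness to conclude $\omega(\chi)=\mathcal{O}$, so the reasoning is partly circular. The paper's route via the compact transversal $S$ and uniform attraction avoids all of this; you should adopt it for (c) and then derive (d) afterwards as in the paper.
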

\begin{proof}
Part a) repeats statements from   Lemma  \ref{pronW} and  Corollary \ref{graphrep}.

\medskip Ad b): From part a) of Lemma \ref{omegaper} we see that all $\psi \in  W\setminus\{0\}$
have a periodic orbit as $\omega$-limit set, and from  part b) of that lemma that  the corresponding domain of attraction $ B_{\psi}$ is open in  $W\setminus\{0\}$. Now connectedness of the latter set  implies that
there can be only one such periodic  orbit, since otherwise, for  arbitrary $\psi \in    W\setminus\{0\}$,
\[  W\setminus\{0\} = B_{\psi} \;  \cup  \bigcup_{ \tilde \psi \in  W\setminus\{0\}, \tilde \psi \neq \psi } B_{\tilde\psi}\] would be a decomposition into nonempty, open, disjoint subsets.

\medskip Ad c): It is clear that $\mathcal{O} \subset  \overline{W}$, and $ \mathcal{O} \cap W = \emptyset$
follows from backward uniqueness, since   the solution $x^{\psi}$ satisfies $x^{\psi}_t \to 0 \;(t \to - \infty) $ for   all $ \psi \in W$. So we have $\mathcal{O} \subset \overline{W}\setminus W$.

Proof of $ \overline{W}\setminus W \subset  \mathcal{O}$: With $ \bar r$ from Theorem \ref{WuuThm}, consider  the circle
\[ \tilde S := \menge{\varphi \in X^{\mathrm{uu}}} { ||\varphi||_* = \bar r/2} \text{  in }  X^{\mathrm{uu}}, \]
and the corresponding set $S := \menge{\varphi + w(\varphi) }{\varphi \in \tilde S} $  in $W$.
Assume now $ \chi \in \overline{W}\setminus W $ and $ \varepsilon >0$.  There exist  sequences
$(\psi_n) \subset W\setminus \{0\}$ and $(t_n) \subset [0, \infty)$ such that $\Phi(t_n, \psi_n) \to \chi$. Since every flow line in $W\setminus \{0\}$ passes  through $S$ and $ S$ is compact,
 we can assume that $(\psi_n) \subset  S$  and $ \psi_n \to \psi^* \in S$.
Then necessarily $t_n $ has no bounded subsequence $(s_n)$, since otherwise
we can assume $ s_n \to s^* \in [0, \infty)$, and then $ \chi = \lim_{n \to \infty}
\Phi(s_n, \psi_n) = \Phi(s^*, \psi*) \in W$, contradicting to $\chi\not\in W$.
Thus we have $t_n \to \infty$.  Now the compact set $S$ is contained in the domain of attraction
of $ \mathcal{O}$, which implies that there exists $T>0$ such that for all
$\psi \in S$ and all $ t \geq T$, $\dist(\Phi(t, \psi),\mathcal{O}) < \varepsilon. $
Hence $\dist(\Phi(t_n, \psi_n),\mathcal{O}) <   \varepsilon$ for all sufficiently large $n$, implying that
$\dist(\chi,\mathcal{O})\leq \varepsilon$. Since this is true for every
positive $\varepsilon$, we conclude $\chi \in   \mathcal{O}$.

\medskip Ad d):  As in the proof of part b) of Lemma \ref{omegaper}, define
$ \Gamma$  as the image of $ \gamma$, so $\Gamma = \pr(\mathcal{O})$. With
$ pW := \pr(W) \subset X^{\mathrm{uu}}$,   part c) of the present theorem shows that
$\pr(\overline{W}) = \pr(W \cup \Gamma) =  pW \cup \Gamma $ (in disjoint union).
Continuity of $ \pr$ and $  \mathcal{O} \subset \overline{W}$ imply that
$ \Gamma \subset \overline{pW}$. (The last closure refers to the closure  in $X^{\mathrm{uu}}$, in principle, but this is the same as the closure in $ \mathbb{X}$.)
Since $\Gamma $ is disjoint to the open subset   $ pW$ of $X^ {uu}$, we conclude
$\Gamma \subset \partial(pW) $, where the boundary symbol $ \partial$ again refers to the topology of $X^{\mathrm{uu}}$. Further, compactness of   $\overline{W}$ and continuity of $\pr$ imply
\[ \overline{pW} =  \overline{\pr(W)}\subset \overline{\pr(\overline{W})} = \pr(\overline{W}) =  pW \cup \Gamma,\]
from which we see that $ \partial(pW)  \subset \Gamma$, so together we have
\[ \Gamma =  \partial(pW). \]

Now, since $pW$  is connected (as continuous image of the connected set $W$) and disjoint to $ \Gamma$,
$pW$ must either be contained in the interior $\text{ int}(\Gamma)$ or in  the exterior
$\text{ ext}(\Gamma)$ (the two connected components of $ X^{\mathrm{uu}} \setminus \Gamma$ in the sense of the Jordan curve theorem, see e.g. Theorem 5.4, p. 362 in \cite{Dug}). Let $K$ be the one  of these two  sets containing $pW$.
If we had $pW \neq K$, then Proposition \ref{topprop} would imply $\partial_K pW \neq \emptyset$ for the
boundary of $pW$ in the relative topology of $K$. But points in this boundary would also be points of the boundary $\partial pW$ of $pW$ in $X^{\mathrm{uu}}$, since $pW$ is also open in $X^{\mathrm{uu}}$. But we know
that $\partial(pW) = \Gamma$ is disjoint to  $K$, a contradiction.  Hence we obtain $pW = K$.
As  $pW$ is bounded and $\text{ ext}(\Gamma)$ is unbounded, we must have
$pW = \text{ int}(\Gamma) = K$.
\end{proof}

\begin{rem}
  Apart from the geometric description  of  the  periodic orbit as `boundary'  of the  manifold $W$, the above theorem has  the advantage
that the  only spectral assumption is an unstable  leading conjugate pair.
We briefly compare this  to  previous  results:
\end{rem}

1) Theorem \ref{PerSolMfd} allows coexistence  of  real  eiqenvalues (which had to be excluded in the papers  \cite{IvaBLW04} and \cite{IvaBLW20}).  We show in Section 5 that, for  the special case of  system  (\ref{standardfb}) with unidirectional  coupling,  this situation can occur precisely if the system dimension is  three or larger.

2) The  method from \cite{IvaBLW20} required a lower bound for  the spectral projection  to the leading unstable space, restricted to a cone
$\mathfrak{K} $ (Condition 3) of Theorem  3.4 on p. 5391 there), where the cone contains initial  states leading to slowly oscillating solutions.   This condition
means practically exclusion of a second conjugate pair of unstable eigenvalues  with imaginary part in $(-\pi/\tau, \pi/\tau)$, if $ \tau $ is the delay.
It was shown in \cite{BravEtAl}, again for the unidirectional coupling case, that  such a  second pair can exist precisely if the system dimension is  5 or larger. See also the corresponding remarks on p. 5392 of
\cite{IvaBLW20}. Again, such a second pair corresponding to  slow oscillation is
allowable in Theorem \ref{PerSolMfd}.

Of course, on the other hand, the   above  theorem
requires  the  additional   assumption  of monotonous feedback.

\section{Attractor location, stability border  and oscillation border  for unidirectional coupling}
\label{locstabosc}
In  this  section we aim to  complement Theorem \ref{PerSolMfd} with information in two directions:
\begin{enumerate}
\item[1)] Estimates on the location of the attractor (and thus, in particular, on the manifold $W$ and its bounding periodic orbit from Theorem   \ref{PerSolMfd});
\item[2)] Investigation of two important numbers associated to the overall feedback strength around the loop  of a cyclic system, which will be called the `oscillation border'  $K_c$ and the `stability border'  $K_u$.
\end{enumerate}
For both purposes, we focus on a simpler special case of system \eqref{standardfb}, namely
systems  of the form
\begin{equation}
\left\{\begin{aligned}
\dot x_1(t)   & = - \mu_1 x_1(t)  + g_1(x_2(t)), \; \\
 \ldots  &  \ldots    \ldots \\
\dot x_j(t)   & = -\mu_j x_j(t) + g_j(x_{j+1}(t)),\; 1 \leq j \leq N-1, \\
 \ldots  &  \ldots    \ldots \\
\dot x_N (t)  & =  - \mu_N x_N(t) + g_N(x_{1}(t- \tau)).
\end{aligned} \right. \label{standarduni}
\end{equation}
where
\begin{equation} \label{fbassum}
\mu_j \geq 0, \;  g_j(0) = 0  \text{ for } j = 1,  \ldots ,N,
\text{ and }  g_j' > 0 \text{ for } j \in \{ 1,  \ldots ,N-1\}, \text{ while }  g_N' < 0.  \end{equation}

(This corresponds to system \eqref{standardfb} with  only $N$ instead of $ N+1$ state variables, which are numbered from $1$ to $N$,
with $ g_j$ not dependent on $x_{j-1}$, and with negative feedback.)
The reasons for this  restriction are  that  the techniques for application of interval maps
seem to be not developed for the more general coupling  structure of system
\eqref{standardfb}, that a detailed  analysis of the characteristic  equation  of
system \eqref{standarduni} is available, and that many systems of interest in the natural sciences fall in the class covered  by \eqref{standarduni} (after suitable transformation).

In analogy to previous sections we take the state space
\[\mathbb{Y} :=  \menge{[\varphi, x_2(0),   \ldots , x_N(0)]}{ \varphi \in C^0([-\tau, 0],\R) , x_j(0) \in \R, \; j = 2,  \ldots N}\]
for  system \eqref{standarduni}.
We include the  simple result below for completeness. We note that the proof uses only the property of the $g_j$
that   $g_j^{-1}(\{0\}) = \{0\}$, and that  the overall feedback around the loop is negative
(but the monotonicity properties are not necessary).

\begin{prop} \label{uniqueq} Under assumptions \eqref{fbassum}, system \eqref{standarduni}
has the  zero solution as the only  equilibrium.
\end{prop}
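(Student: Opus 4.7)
The plan is to argue purely from the equilibrium equations, exploiting the cyclic sign reversal. Setting each $\dot x_j = 0$ in \eqref{standarduni} yields the algebraic system
\begin{equation*}
\mu_j x_j = g_j(x_{j+1}) \text{ for } 1 \leq j \leq N-1, \quad \mu_N x_N = g_N(x_1).
\end{equation*}
By \eqref{fbassum}, each $g_j$ is strictly monotone with $g_j(0)=0$, which immediately gives $g_j^{-1}(\{0\}) = \{0\}$ as well as the sign relations $\operatorname{sgn}(g_j(x)) = \operatorname{sgn}(x)$ for $j \leq N-1$ and $\operatorname{sgn}(g_N(x)) = -\operatorname{sgn}(x)$.

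First I would dispose of the case where some decay rate vanishes. If $\mu_{j_0}=0$ for some index $j_0$, the $j_0$-th equation becomes $g_{j_0}(x_{j_0+1})=0$ (with index $N+1$ read as $1$), forcing $x_{j_0+1}=0$ by injectivity. Substituting this into the $(j_0+1)$-th equation yields $g_{j_0+1}(x_{j_0+2})=0$ regardless of the value of $\mu_{j_0+1}$, so $x_{j_0+2}=0$; iterating cyclically around the loop kills all components.

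In the remaining case where all $\mu_j>0$, I would propagate signs instead. From $\mu_j x_j = g_j(x_{j+1})$ with $\mu_j>0$, we get $\operatorname{sgn}(x_j) = \operatorname{sgn}(x_{j+1})$ for $j=1,\dots,N-1$ and $\operatorname{sgn}(x_N) = -\operatorname{sgn}(x_1)$. Chaining these gives $\operatorname{sgn}(x_1) = \operatorname{sgn}(x_2) = \dots = \operatorname{sgn}(x_N) = -\operatorname{sgn}(x_1)$, which is only possible if $x_1=0$. Then $g_1(x_2) = \mu_1 x_1 = 0$ forces $x_2 = 0$, and inductively $x_j=0$ for all $j$.

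There is really no obstacle here — the argument is two or three lines once the cyclic sign-reversal induced by the negative feedback across the loop is written down. The only thing worth flagging carefully is the uniform treatment of the case where $\mu_j = 0$ is allowed for some $j$, since then the sign argument alone would not immediately close; this is why the injectivity of $g_j$ at zero (equivalently $g_j^{-1}(\{0\}) = \{0\}$) is needed in addition to monotonicity, exactly as the authors' remark before the proposition anticipates.
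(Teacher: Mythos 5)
Your proof is correct, and it is a genuine (if small) variant of the paper's. The paper first establishes the implication that if any $x_j^*=0$ then all components vanish, and then, assuming all components nonzero, deduces that every $\mu_j$ must be positive and forms the composite map $\frac{1}{\mu_1}g_1\circ\cdots\circ\frac{1}{\mu_N}g_N$, invoking the fact that a strictly decreasing map fixing $0$ has $0$ as its only fixed point. You instead split on whether some $\mu_{j_0}$ vanishes: in that case you propagate zeros around the loop using $g_j^{-1}(\{0\})=\{0\}$, and when all $\mu_j>0$ you propagate signs, obtaining $\sign(x_1)=\cdots=\sign(x_N)=-\sign(x_1)$ and hence $x_1=0$. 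The two arguments rest on the same structural facts (injectivity of each $g_j$ at $0$ plus the odd number of sign reversals around the cycle), but your sign chase replaces the paper's composite-map fixed-point step with a one-line contradiction and is arguably slightly more elementary. A minor organizational difference is that you branch on the data $\mu_j$, which is known in advance, rather than on the unknowns $x_j^*$; neither choice is better, but yours does neatly isolate the only place where $\mu_j\ge 0$ rather than $\mu_j>0$ causes any work.
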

\begin{proof}
Obviously zero is an equilibrium.
Assume now that  $(x_1^*,   \ldots , x_N^*) $ is an equilibrium.
System \eqref{standarduni}  can be written as
$\dot x_j(t)    = -\mu_j x_j(t) + g_j(x_{j+1}(t)),\; 1 \leq j \leq N,$ with the index $j+1$ taken $\mod N$.  From  the cyclic structure  and the fact that every $g_j$ has $0$ as its only zero, one obtains  the implication
\begin{equation} \label{allzero}
x_j^* = 0 \text{ for   some } j \Longrightarrow  x^*_{(j + 1)\text{mod} N} = 0  \Longrightarrow  x_k^*  = 0 \text{ for all } k.     \end{equation}

If we had one $x_j^* \neq 0$, and then, in  view of \eqref{allzero},
all $  x_k^* $ nonzero, then all terms $g_j(x_{j+1}^*)$ would have to   be nonzero,  showing that necessarily all $\mu_j $ are nonzero.
It follows then  that
\[ x_1^* = \frac{1}{\mu_1} g_1(x_2^*)= \text{(if } N \geq 2) =
 \frac{1}{\mu_1} g_1( \frac{1}{\mu_2} g_2(x_3^*)) =   \ldots
 =\frac{1}{\mu_1}g_1 (\frac{1}{\mu_2} g_2(   \ldots  \frac{1}{\mu_{N}} g_N(x_1^*)   \ldots ).\]
From  the feedback assumptions,  the composition on the right hand side has negative feedback and hence $0$ as its only fixed point, so necessarily $x_1^* = 0$, contradicting  the property that all $x_j^*$ are nonzero.  Thus, zero is the only  equilibrium.
\end{proof}

\subsection{Approximate location of the attractor and of the manifold $W$ from Theorem \ref{PerSolMfd}.}

\noindent
Based on the  method that goes  back to  Ivanov and Sharkovsky \cite{IvaSharko}, one can describe invariant  and attractive sets for  system  \eqref{standarduni}, which then contain  the attractor, and thus in particular  the manifold $W$ together with  the periodic orbit in the closure of $W$.  Such a description was given in Theorem 5.2 on p. 5394 of \cite{IvaBLW20}. With a view on applications in Section 6,  we give  a slightly modified version of  that theorem which is true if we assume that $g_N$ in \eqref{standarduni} is bounded and not only bounded to one side (above or below).
 As in \cite{IvaBLW20}, we  assume that $ \mu_j>0$ for all $j$, and set
\[
G_j := \frac{1}{\mu_j}g_j, \; j = 1,  \ldots,  N, \text{ and } G := G_1 \circ    \ldots  \circ G_N.
\]

\begin{thm} \label{attrloc}
Under assumptions \eqref{fbassum}, assume in addition that all $ \mu_j$ are positive and  that $G_N$ is  bounded. Define the compact intervals
\[ I_1  := \overline{G(\R)}, \; I_N := G_N(I_1), \; \text{ and }   I_j := G_j(I_{j+1}),
j \in \{ 2,  \ldots ,N-1\}.\]
Then the set\newline
$\tilde{\mathbb{Y}} :=  \menge{[\varphi, x_2(0),   \ldots , x_N(0)] \in \mathbb{Y}}
{ \varphi ([-\tau, 0]) \subset I_1,  x_j(0) \in I_j\; j = 2,  \ldots,  N}$
\newline is attracting and invariant
for the semiflow on $\mathbb{Y} $ induced by equation \eqref{standarduni}.
In  particular, this set contains the closure $\overline{W}$ of the two-dimensional manifold $W$  as described in Theorem \ref{PerSolMfd}.
\end{thm}
\begin{proof} From Proposition \ref{uniqueq} we know that   system
 \eqref{standarduni} has only zero as equilibrium.
Under  the further assumptions here, $ I_1 = \overline{G(\R)} $ is already a  compact and $G$-invariant interval, which contains zero in its interior (recall the conditions on
$g_j'$). Hence we can  proceed with the proof as in the proof of part 2 of  Theorem 5.2 in
  \cite{IvaBLW20}, using this  interval $I_1$ instead.
\end{proof}
We take the opportunity to correct  minor  errata   from  the just quoted paper
\cite{IvaBLW20}. The first of these  has to do with  differences between the version of \cite{IvaBLW20}  on ArXiv and the commercially  published version.

\bigskip
1. In the beginning of the proof of Proposition 5.1 on p. 5394 of  \cite{IvaBLW20}, it should be:

   \ldots  contained in [4]  as the claim `$x_j(t) \in F_j(A_{j+1}), \, t \geq s_j$' within the proof of Lemma 6   \ldots

\medskip 2.  In reference [4], it should be: Discrete Contin. Dyn. Syst.,  Ser. S 13 (1)

\bigskip

Note also  that the argument in the proof of Lemma 6 from \cite{BravEtAl}, and hence also the argument of Proposition 5.1 from
\cite{IvaBLW20},  both use the property that zero   is the only  equilibrium of the system; otherwise only weaker statements like\newline
`$ \liminf_{t \to \infty} \text{dist}(x_j(t), I_j) = 0$' instead of `$x_j(t) \in I_j $ eventually' would be true.

\subsection{Oscillation border  and   stability border}

\noindent  We continue with  the  characteristic equation  associated to the zero solution of \eqref{standarduni}, which   was investigated in detail in a number of papers, in particular, in  \cite{BravEtAl}. (Compare also, e.g., \cite{Belair1992} for a related  equation with more general coupling but all decay coefficients equal, \cite{IvaBLW04} for the case $N = 3$ and
\cite{IvaBLW20} for general $N$.)
Define $a_j := g_j'(0), j = 1,  \ldots ,N$, so $a_j >0, j \in \{1,   \ldots,  N-1\}$, and $a_N <0$,
and set
\[
K := |\prod_{j = 1}^N a_j | = |\prod_{j = 1}^Ng_j'(0)| >0.
\]
The characteristic function $\chi$ and the characteristic equation associated to the zero equilibrium of \eqref{standarduni} are then
given by

\begin{equation}
\chi(\lambda) := (\lambda + \mu_1)\cdot   \ldots  \cdot(\lambda + \mu_N)  + Ke^{-\lambda\tau } = 0.
\label{CharEq}
\end{equation}
The zeroes of $ \chi$ are  also called eigenvalues (they are eigenvalues of the generator of the semigroup defined by the linear equation), and  determine the stability of the zero  solution.

Note that  only the family $(\mu_1,   \ldots ,\mu_n)$  of decay coefficients, the `loop delay'
$ \tau$ (which occurs as sum of  all delays, when similar systems with several delays are transformed to
the shape (\ref{standarduni}))  and the product
of the local feedback strengths $a_j$ at zero enter the characteristic function. This fact  reflects
the  unidirectional cyclic coupling structure of the system.
We consider equation \eqref{CharEq}  from the point of view of varying the parameter $K$.
With the application of Theorem \ref{PerSolMfd} in
 mind,  two quantities are of particular interest, which we call the
`oscillation border'  $K_c\geq 0 $ (the index $c$ stands for `complex'), and the `stability border' $K_u > 0 $. We will show below that there exist
unique such numbers with the following properties:
\[
K \in [0,  K_c] \Longleftrightarrow \text{ Equ.   \eqref{CharEq} has  real solutions;}
\]
so for $K > K_c$  all eigensolutions  of the linearization of  \eqref{standarduni}
are oscillatory. Further,
\[
K> K_u  \Longleftrightarrow \text{ Equ.  \eqref{CharEq} has  solutions with positive real part}.
\]
In case $ K> K_u$ the  system always has a  leading pair of  (properly) complex conjugate eigenvalues
with maximal real part among all eigenvalues, so that Theorem 4.2 applies.

We  are interested in the question  which of the numbers $K_c$ and $K_u$ is smaller;
it was observed in \cite{IvaBLW04} that for  $N = 3$  both inequalities
$K_c < K_u $ and $K_c > K_u$ are possible.
In order to obtain periodic solutions  via fixed point methods  in the papers \cite{IvaBLW04} ($N = 3$),   \cite{IvaBLW20} (general $N$)
and also in  \cite{IvaBLW23} and \cite{HT} (for $N= 1$), one had to exclude real eigenvalues,
in order to provide generally oscillatory solution behavior.

On the other hand, Theorem  \ref{PerSolMfd} from the present  work is applicable
whenever one has a leading unstable complex pair of eigenvalues, and exclusion
of simultaneously existing real eigenvalues (which are automatically  negative,  as one
sees directly from \eqref{CharEq}) is unnecessary.
Therefore the possible cases where  $K_c > K_u$ (instability then  coexists with real negative eigenvalues for $K \in (K_u, K_c]$ are of interest. In these cases the results of
Theorem \ref{PerSolMfd}  are true and give in particular the periodic solution in the closure of the manifold $W$, while  results  in the spirit of   \cite{IvaBLW20} do not apply.
Note however, that  on the other hand Theorem \ref{PerSolMfd}  requires the monotonicity
 assumptions which are not needed in \cite{IvaBLW20}.

\medskip Regarding  the oscillation  and   stability borders, we state the result below.
The case $N = 2$ is a particular challenge to treat because, as
$ \mu_j \to 0, j = 1,2$,  both numbers  $K_u$ and $ K_c$ also necessarily approach zero.

\begin{thm} \label{thm52}
For statements a) and b) below, assume that $ \mu_1,   \ldots , \mu_N > 0 $
and $ \tau >0$ are given.
\begin{enumerate}
\item[a)]  There exist unique $K_c \geq 0 $ and $ K_u>0$  as described above, and for  $K > K_u$
the characteristic equation has a unique leading complex conjugate  pair of solutions in the
right half plane, as required in Theorem \ref{PerSolMfd}.

\item[b)] $K_u$ is given by
\begin{equation} K_u = \prod_{j = 1}^N \sqrt{\omega_1^2 + \mu_j^2},
\label{Kueq}\end{equation}
 where
$\omega_1 \in (0, \pi/\tau) $ is the unique solution to the equation
\begin{equation}\pi-\omega\tau  = \sum_{j = 1}^N \arctan(\omega/\mu_j).
\label{omegaeq}\end{equation}

\item[c)]  Depending on the dimension $N$, the possibilities for inequalities between these numbers are
as follows:  For $N \in \{1,2\}$,  one always has $K_c < K_u$,
while for $ N \geq 3$ both inequalities $  K_c < K_u$ and $  K_c >  K_u$ are possible
for suitable choices of $\tau$ and the $\mu_j$.
 \end{enumerate}
\end{thm}
\begin{proof} For the statements in a) and b) concerning $K_u$, see Lemma 3 in
\cite{BravEtAl}. (The number $\omega_1$  is the solution of equation (10) in
\cite{BravEtAl} for the  case $k= 1$.)

Obviously, equation \eqref{CharEq} never has positive real solutions.
Define  the polynomial function  \[p(x) := (x + \mu_1) \cdot   \ldots  \cdot (x + \mu_N);\]
 then \eqref{CharEq}
has a negative real solution for some $K \geq 0$ if and only if there exists  $x < 0 $ such that
$ 0 \geq  -Ke^{- x\tau } \geq p(x)$ (certainly for $ K = 0$),
and then this property  also holds for all $\tilde K  \in [0, K]$.
Now the estimate   $|p(x)| \leq \max\{|x|, \max_j\mu_j\}^N$ shows that for large enough $K>0$, there are no real zeros of $\chi$.
Thus defining \[K_c := \sup\menge{K \geq 0}{\text{ equ. }  \eqref{CharEq} \text{ has a real solution}}\]
one obtains,  by using  continuity,  that   the characteristic equation has real (negative) solutions if
and only if $ K \in [0,  K_c]$.  (See also Lemma 1 in  \cite{BravEtAl}.)  Parts a) and b) are proved.

\medskip Proof of c):
\textit{The  case $N = 1$:} Writing $\mu$ instead of $\mu_1$,  equation  \eqref{CharEq}  becomes
 $\lambda + \mu  = - K e^{-\lambda \tau}$, and formula \eqref{Kueq}  gives
$K_u = \sqrt{\omega_1^2 + \mu^2}$, with  $ \pi - \omega_1\tau  = \arctan(\omega_1/\mu) < \pi/2$, so
$\omega_1 > \pi/(2\tau)$ and hence $ K_u >  \pi/(2\tau)$.
Writing $z = \tau(\lambda  + \mu)$ and $ \tilde K := \tau e^{\mu \tau} K$,
the characteristic equation is   equivalent to
$ z = -\tilde K e^{-z}$. It is known since, e.g.,  the work of Wright (\cite{Wright55}, Theorem 5, p. 72) that
$\tilde K_c := e^{-1}$   is the oscillation border for this equation,
corresponding to $ K_c = \frac{e^{-1}}{\tau e ^{\mu \tau}}.$
Thus we have
$ K_c <  \frac{e^{-1}}{\tau} <  \frac{\pi}{2\tau} < K_u$.

\medskip \textit{The case $N = 2$:} The characteristic equation now is
\begin{equation}
\chi(\lambda) = (\lambda + \mu_1)(\lambda + \mu_2)  +   Ke^{-\lambda\tau } = 0.
\label{N21}
\end{equation}
We  can assume  (without restricting generality) that  $ \mu_1 \geq \mu_2 >0$.
At the oscillation border $K_c$, one must have $ \chi(\lambda) = 0$ and also
 $ \chi'(\lambda) = 0$ for some real negative $ \lambda$, so in addition to \eqref{N21}
(with $K = K_c$) one has
\begin{equation}
2\lambda + \mu_1+ \mu_2  -  K_c\tau e^{-\lambda\tau } =0.
\label{N22}
\end{equation}
Then the polynomial $p(z) := (z + \mu_1) (z + \mu_2) $ must  have a non-negative derivative and a non-positive value at $ \lambda$,
so that  with $ \bar\mu := (\mu_1 + \mu_2)/2$ one has
\begin{equation} \lambda  \in [-\bar \mu, -\mu_2].
\label{N23}
\end{equation}
From \eqref{N21} and  \eqref{N22}  one obtains
\[(\lambda + \mu_1)(\lambda + \mu_2)  = -\frac{2}{\tau }\lambda -\frac{2\bar \mu}{\tau},\]
or $   \lambda^2 + 2(\bar \mu + \frac{1}{\tau}) + \frac{2\bar \mu}{\tau} + \mu_1 \mu_2 = 0$,
hence
$ \lambda = -(\bar \mu + \frac{1}{\tau}) \pm \sqrt{(\bar \mu + \frac{1}{\tau})^2 -
 \frac{2\bar \mu}{\tau} -  \mu_1 \mu_2 }. $
The expression under the square root  equals
 \[ \frac{\mu_1^2 + \mu_2^2}{4} + \frac{2\mu_1\mu_2}{4}  +  \frac{2\bar \mu}{\tau} + \frac{1}{\tau^2}
 -  \frac{2\bar \mu}{\tau} -  \mu_1 \mu_2  = \left(\frac{\mu_1 - \mu_2}{2}\right)^2 + \frac{1}{\tau^2}, \] in particular it is always positive.  From \eqref{N23} we see that
$\lambda \geq - \bar \mu$, so we must have
\begin{equation}
\lambda = -(\bar \mu + \frac{1}{\tau}) + \sqrt{ \left(\frac{\mu_1 - \mu_2}{2}\right)^2 + \frac{1}{\tau^2}}, \label{lamb}
\end{equation}
from which (recall $ \mu_1 \geq \mu_2$)  we obtain the estimate
\[
\lambda \leq \underbrace{-\frac{\mu_1}{2} - \frac{1}{\tau} +  \sqrt{ \frac{\mu_1^2}{4} + \frac{1}{\tau^2}}}_{  =:\bar\lambda(\mu_1) }.
\]
Note that  $ {\bar \lambda}'(\mu_1) = - \frac{1}{2} + \frac{1}{2\sqrt{  \ldots }}\cdot \frac{\mu_1}{2}  < -\frac{1}{2} + \frac{1}{2\cdot \mu_1/2}\cdot \frac{\mu_1}{2} = 0,$  so
 $\bar \lambda$ decreases with increasing $ \mu_1$.
  From \eqref{N22} and \eqref{lamb} we can now conclude
\begin{equation} K_c = \frac{e^{\lambda \tau}}{\tau}2(\lambda + \bar \mu) =
 \frac{2e^{\lambda \tau}}{\tau} \left( - \frac{1}{\tau} + \sqrt{ \left(\frac{\mu_1 - \mu_2}{2}\right)^2 + \frac{1}{\tau^2}}\right).
\label{N24}
\end{equation}
We prepare an  inequality concerning $K_c$: Since for all $z\in \R$ one has  $  p(z) \geq  p( -\bar \mu) = - \frac{(\mu_1 - \mu_2)^2}{4}$, equation \eqref{N21}  and $ \lambda<0$ imply
\begin{equation} K_c < \frac{(\mu_1 - \mu_2)^2}{4} \leq \frac{\mu_1^2}{4}.
\label{N26}
\end{equation}
Recall the number $\omega_1 \in (0, \pi/\tau)$ from part b) of the present theorem,  satisfying
\begin{equation}
\pi - \omega\tau = \arctan(\omega/\mu_1) + \arctan(\omega/\mu_2), \text{ and }
K_u =  \sqrt{\omega_1^2 + \mu_1^2}\cdot \sqrt{\omega_1^2 + \mu_2^2}.
\label{N27}
\end{equation}
We distinguish several cases now:

\textit{Case 1:} $ \omega_1 \geq \mu_1.  $ Then, in view of \eqref{Kueq} and  \eqref{N26},
\[ K_u \geq  \sqrt{2\mu_1^2 }\cdot \omega_1  = \sqrt{2}\mu_1 \omega_1 \geq \sqrt{2} \mu_1^2> \frac{\mu_1^2}{4} > K_c. \]

\textit{Case 2:} $ \omega_1 < \mu_1$.
Then the first equation in \eqref{N27} shows that $  \pi-\omega_1\tau  < \arctan(1) + \pi/2 = 3\pi/4$,  so
\begin{equation} \pi/4 < \omega_1 \tau < \mu_1 \tau.  \label{omega1esti}\end{equation}

\textit{Subcase 2a:} $ \mu_1 < \frac{\pi}{2\tau}$. Then
$  \frac{\pi}{4\tau} < \omega_1 < \mu_1 < \frac{\pi}{2\tau}.$
It follows that
\[ K_u \geq   \sqrt{ 2\omega_1^2} \cdot\sqrt{\omega_1^2 } \geq \sqrt{2} \frac{\pi}{4\tau} \cdot  \frac{\pi}{4\tau}
= \sqrt{2}  \frac{\pi^2}{16\tau^2},\]
and  \eqref{N26} gives
$ K_c \leq  \frac{\mu_1^2}{4} \leq  \frac{\pi^2}{16\tau^2} <  \sqrt{2}  \frac{\pi^2}{16\tau^2} \leq K_u. $

\medskip \textit{Subcase 2b:} $ \mu_1 \geq \frac{\pi}{2\tau}$. Then, since $\bar\lambda$ is a decreasing function of $ \mu_1$,  $\lambda$  from \eqref{lamb}
satisfies
\[\lambda \leq\bar \lambda(\mu_1) \leq \bar\lambda( \frac{\pi}{2\tau})
=  -\frac{\pi}{4\tau} - \frac{1}{\tau} +  \sqrt{ \frac{\pi^2/(4\tau^2)}{4} + \frac{1}{\tau^2}}
=  \frac{1}{\tau}[(-(1 + \pi/4) +  \sqrt{  (\pi/4)^2 + 1}],
\]
 and hence \eqref{N24} gives
\[K_c  \leq  \frac{1}{\tau} 2 \exp\left[-(1 + \pi/4) +  \sqrt{  (\pi/4)^2 + 1} \right] \cdot
\left( - \frac{1}{\tau} + \sqrt{ \left(\frac{\mu_1 - \mu_2}{2}\right)^2 + \frac{1}{\tau^2}}\right).
\]
We use the numerically computed estimate
$ 2 \exp\left[-(1 + \pi/4) +  \sqrt{  (\pi/4)^2 + 1} \right] \leq 1.19  \ldots  \leq \frac{6}{5} $ to conclude
\[ K_c \leq   \frac{1}{\tau}\cdot  \frac{6}{5}  \left( - \frac{1}{\tau} + \sqrt{\frac{\mu_1^2}{4}  + \frac{1}{\tau^2}}\right) = \frac{1}{\tau^2} \frac{6}{5}\left(\sqrt{\frac{\mu_1^2\tau^2}{4} + 1} -1 \right).
\]
For $K_u$ we now have (in view of \eqref{omega1esti})
\[K_u \geq\sqrt{\omega_1^2 + \mu_1^2}\cdot \omega_1 \geq
\sqrt{\left(\frac{\pi}{4\tau}\right)^2  + \mu_1^2} \cdot \frac{\pi}{4\tau}
= \frac{\pi}{4\tau^2}\sqrt{\frac{\pi^2}{16} + \mu_1^2\tau^2}.
   \]
With  the  above estimate for $K_c$, and setting $u := \mu_1^2 \tau^2 $ (so $ u \geq \pi^2/4$ in the current subcase 2b)),
the assertion $K_c < K_u$ follows if  we show that
\[ \frac{6}{5} \left(\sqrt{\frac{u}{4} + 1 } - 1\right) <
\frac{\pi}{4}\sqrt{u + \frac{\pi^2}{16} } \text{ for all } u \geq \pi^2/4,  \text{ or } \]
\begin{equation}
\underbrace{\sqrt{u + 4 } - 2}_{ =: \varphi(u)} \;  < \;
\underbrace{\frac{5\pi}{12}\sqrt{u + \frac{\pi^2}{16} }}_{ =:\psi(u)} \quad \text{ for all }\quad u \geq \pi^2/4. \label{N28}
\end{equation}
Obviously $0 = \varphi(0) < \psi(0)$, so it suffices to prove $\varphi'(u) \leq \psi'(u) $ for all $ u \geq 0$.
The latter inequality  is equivalent to $ \frac{1}{2\sqrt{u+4}}  \leq \frac{5\pi}{12}\cdot \frac{1}{2\sqrt{u + \pi^2/16}}$,
and hence to   $ \frac{u+4}{u + \pi^2/16}\geq \left(\frac{12}{5\pi}\right)^2$, or
\[
\left(\frac{12}{5\pi}\right)^2 u + \frac{9}{25} \leq u + 4, \text{which is true  for all }  u \geq 0.
\]
Thus  \eqref {N28} is true (even for all $ u \geq 0$),  and $K_c < K_u$ is proved also in subcase 2b).

\bigskip
\textit{The case $N \ge 3$}: (Here we can extend the corresponding arguments  from \cite{IvaBLW04} for $ N=3$; in particular, the fact
that now in equation \eqref{omegaeq} there are at least three $\arctan$-terms allowing  to have $ K_u \to 0 $ while $ K_c$  remains
bounded from below (unlike the case $N = 2$).

\noindent \textit{Proof that  $  K_c < K_u$ is possible:}  Choose $M \geq 1$ such that
$|z|^N < e^{-z}$ for all $ z \in (-\infty, -M]$. Then, for $z \leq -M$ and $K \geq 1$, one has
\[( z + M)^N + Ke^{-z}  > -|z|^N + e^{-z} \geq 0, \]
so that the function $z \mapsto (z + M) ^N +  Ke^{-z} $ has no zeroes in $ (-\infty, -M)$ and therefore also no zeroes in $(-\infty, 0]$. This is the characteristic function from
\eqref{CharEq} if we set $  \tau = 1$ and all $ \mu_j $ equal to $M$, so we can conclude
$K_c < 1$ for such parameters. From formula \eqref{Kueq} we see that
in this case $ K_u > \prod_{j= 1}^N \mu_j = M^N \geq 1$, so $K_c  < 1 \leq K_u$.

\medskip \textit{Proof that  $  K_c > K_u$ is possible:}
Here we distinguish the cases of even and odd $N$.
We set $ q(z) := z^N$ if $ N \geq 3 $ is odd, and $q(z) := (z+1)(z+4)\cdot z^{N-2}$ if $N \geq 3 $ is even.

In both cases, the function $q$ is negative on the interval $ I := [-3,-2]$, and (by continuity) there exist $m_1 >0, K_1 >0$ such that
one has
\begin{align*} \chi(z) &:= (z+1) (z+4)\prod_{j = 3}^N(z+ \mu_j) + K e^{-z}  < 0 \\ & \quad  \text{ for } z \in I,\;  \mu_j \in (0, m_1] \;( j \geq 3), \;  K \in (0, K_1]  \text{ if } N  \text{ is even};  \\
\chi(z) &:=  \prod_{j = 1}^N(z + \mu_j) +   K e^{-z}  < 0   \text{ for } z \in I, \; \mu_j  \in (0, m_1] \\
& \quad \text{ for all } j,   \; K \in (0, K_1]  \text{ if } N  \text{ is odd}.
\end{align*}
(The reader can notice that the left hand sides of the above inequalities are the characteristic functions for $ \tau = 1$ and the corresponding choice of the $ \mu_j$.)
In both cases,  since the exponential term makes $\chi(z)$  go to $+\infty$ as $ z \to - \infty$, the function $ \chi$ has a zero in $(-\infty, -3)$.
This means that  if we set $ \tau := 1,  \mu_1 := 1; \mu_2 := 4 $ and take $\mu_j  \in (0, m_1] $ for $ j \geq 3 $ in case $N $ is even, and if we take $ \tau := 1$ and  \textit{all}    $\mu_j \in  (0, m_1] $ in case $ N$  is odd,
then
 \begin{equation}
 K_c \geq K_1\quad  \text{is satisfied in both cases.} \label{N29}
 \end{equation}
\textit{In case of even $ N$,} one has a choice of at least two values $ j \in \{3,   \ldots ,N\}$. Letting $\mu_j \to 0 $ for these $ j$, but keeping fixed $  \mu_1 = 1, \mu_2 = 4$,  one sees from formula \eqref{omegaeq}
that $\omega_1 \to 0$ as $ \mu_j \to 0 \; (j \geq 3)$, since otherwise at least two
$\arctan$-terms in that formula with $ \mu_j \to 0$ already give $ \pi $ in the limit (recall that we set $ \tau :=1$).  Formula \eqref{Kueq} now shows that $K_u \to 0 $ as  $ \mu_j \to 0 \; (j \geq 3)$,
so using \eqref{N29} we obtain $K_u < K_1 \leq K_c$ for $ \tau = 1,    \mu_1 = 1, \mu_2 = 4$, and all sufficiently small $\mu_j, \;  j \geq 3$.

\medskip\noindent\textit{In case of  odd $ N \geq 3$},   we obtain as above that $\omega_1 \to 0 $ as
\textit{all} $ \mu_j \to 0$, and again formula \eqref{Kueq} shows that  $K_u \to 0 $ in this case.
Thus, we obtain again $K_u <  K_1 \leq K_c$ for $ \tau = 1 $ and all $ \mu_j $ sufficiently small.
\end{proof}

\section{Cyclic gene regulatory networks}
In this final section we indicate applications of our results to systems used as models for cyclic gene regulatory networks --
in particular,  the famous `repressilator' of Elowitz and Leibler.  Compare
the  well-known paper by Goodwin \cite{Goodwin65}, and
\cite{BanMah78a}, \cite{BanMah78b}, 
\cite{EloLeib}, \cite{Mah80}, \cite{TakadaEtAl2012}, \cite{HoriEtAl2013} (some  references with typical   character for this broad field).
These systems are of the form
\begin{equation}
\left\{\begin{aligned}
\dot r_i (t)  & = -a_i r_i (t) + \beta_i f_i (p_{i-1} (t -  \tau _{p_{i-1}})),\; i = 1,  \ldots ,n, \\
\dot p_i(t)  & = -b_i p_i (t) + c_i r_i(t - \tau_{r_i}), \;  i = 1,  \ldots ,n.
\end{aligned} \right. \label{genereg}
\end{equation} Here the index $i$ is to be taken  mod $n$, i.e., $0$ is to be read as $n$,
and  we use  lower indices for the nonlinearities here.
The constants $a_i,  \beta_i, b_i, c_i $ are positive constants, and the delays satisfy
$ \tau_{p_{i-1}} \geq 0 $ and    $ \tau_{r_i} \geq 0. $
 Note that the coupling here is to the previous neighbor in the sense of the index $i$, as opposed to the `next' neighbor coupling in system \eqref{standarduni}.
 The  monotone functions $ f_i$   are  so-called Hill functions:
\begin{equation} f_i(x)  = \frac{1}{1 + x^{\nu_i}_i} \text{ (decreasing), or }
f_i(x)  = \frac{ x^{\nu_i}}{1 + x^{\nu_i}}  \text{ (increasing)},
 \label{deacinc}
\end{equation}
with exponents  $\nu_i \geq 1$,  frequently chosen in the interval $[2,3]$.
 The variable $r_i$ represents  the  concentration of the $i$-th  sort of messenger RNA, and $p_i$ the concentrations of an associated protein that influences the transcription (creation) of the next RNA type in the chain.   In order to have negative feedback around the whole loop, it is necessary that the total number of indices $i$ with decreasing $f_i$ is odd.
See \cite{TakadaEtAl2012}, formulas (1) and (2) on p. 3.   The special case  of \eqref{genereg} with $n = 3$ and all three $ f_i$ of the decreasing type   is known as  model for  the actual biological system called  `repressilator',
 which was `synthetically' created in  bacteria in \cite{EloLeib}. The original differential equation model in \cite{EloLeib} had no delays.

The total number of state variables in \eqref{genereg}  is obviously $N: = 2n$.
It is indicated in detail how system \eqref{genereg}  can be transformed into the  form
\eqref{standarduni} with $ \tau = 1$ in Proposition 1, p. 4 of
\cite{TakadaEtAl2012} and in Appendix A of the same paper.

In particular:
\begin{enumerate}
\item[1)] The decay coefficients $ \mu_j \; (j = 1,  \ldots ,N)$
  of  the resulting system, which has the form  \eqref{standarduni},  satisfy
\begin{equation}\mu_{2j-1} = Tb_{N+1-j}, \; \mu_{2j} =  Ta_{N+1-j}, \; j = 1,  \ldots , n.
\label{mutrans}
\end{equation}
with the  $a_i$   and $ b_i$ of system \eqref{genereg},
and  the total loop delay $ T  = \sum_{i =1}^n (\tau_{p_{i-1}} + \tau_{r_i})$ of the latter.

\item[2)] The `nonlinearities'  $g_{2j-1}, \; j = 1,  \ldots ,n$ in system \eqref{standarduni}  are actually linear and given by
\begin{equation*}g_{2j-1}(x) = T\cdot c_{N+1-j}x, j = 1,  \ldots ,n.
\end{equation*}

\item[3)]  The nonlinearities  $g_{2j}, j = 1,  \ldots ,n$ in system \eqref{standarduni}
are given by
\begin{equation*}g_{2j}(x) = \sigma_{2j}T \cdot \beta_{N+1-j} f_{N+1-j}(\sigma_{2j+1}x), j = 1,  \ldots ,n,
\end{equation*}
 with the coefficients $\beta_i$ from  \eqref{genereg} and with
appropriately chosen signs $ \sigma_k  \in \{-1, +1\}$ for $  k = 1,  \ldots , 2n+1$,
to achieve that   $g_j' > 0,  j \in \{2, 4,  \ldots ,N-2\}$, and $  g_N' < 0$.
\end{enumerate}

We will show below that  system  \eqref{genereg}  has a unique equilibrium  (in $(0, \infty)^{2n}$)
which is denoted by $(r_1^*, p_1^*,   \ldots , r_n^*, p_n^*)$, and  the corresponding  transformed system of type \eqref{standarduni}
also has a unique equilibrium $(x_1^*,   \ldots , x_N^*)$ (in the region
in $\R^{2n}$  obtained by transforming   $(0, \infty)^{2n}$),
but it will not be zero. One sees from formula (6) on p. 4 of \cite{TakadaEtAl2012}
that   $ x_{2j-1}^* = \pm p_{N+j-1}^*,  \;  x_{2j}^* = \pm r_{N+1-j}^*, j = 1,  \ldots ,N. $
We obtain the following natural  relation between quantities at the equilibria:
Let  $K:= |\prod_{j = 1}^Ng_j'(x_{j+1}^*)| $ (the index taken mod $N$).
Note that, due to negative loop feedback, the product without absolute value is negative.
The positive  number  can be seen as the total feedback strength of the nonlinearities around the loop at   the equilibrium.  Then
\begin{equation}
K = T^n \cdot  \left(\prod_{i = 1}^nc_i\right )\cdot T^n \left(\prod_{i = 1}^n\beta_if_i'(p_{i-1}^*)\right) =
T^{2n} \left(\prod_{i = 1}^nc_i\right ) \cdot \left(\prod_{i = 1}^n\beta_if_i'(p_{i-1}^*)\right), \quad \label{Ktrans}
\end{equation}
where the index $i$ is taken by modulus $n$.

In order to obtain a system  of type  \eqref{standarduni} with zero as the unique equilibrium
(assuming that the original gene regulatory system  \eqref{genereg}  has a unique equilibrium),
one has to apply an  additional transformation
replacing  $ x_j$  in  \eqref{standarduni} by $y_j := x_j - x_j^*$.
The resulting system in the $y_j$  then has the same decay coefficients $\mu_j$, the same delay  $\tau = 1$, and  the same  total
feedback strength $K$ at the equilibrium (which is then  zero).
These quantities are known from above,  which is why we do not write this transformation explicitly. Note, however,  that  while the
existence of a  unique equilibrium  for  \eqref{genereg}  is not too difficult to see  from the monotonicity of the $f_i$
(see Lemma \ref{lem61} below and compare, e.g., with \cite{HoriEtAl2010}), its explicit  values are in general  accessible only
numerically.

The biologically given invariance of the set of initial conditions  with
non-negative values for system \eqref{genereg} leads to a corresponding
 invariant set for the final equation  of form \eqref{standarduni}   with zero as unique
equilibrium -- clearly this will not be the set of non-negative states.

\subsection{Attractor location for system \eqref{genereg}}

\noindent Similar to Theorem \ref{attrloc}, we want to roughly describe the location of the
global attractor for system \eqref{genereg}. Instead of going through the transformations
and then applying Theorem \ref{attrloc}, it seems preferable  to directly state the following simple consequences of the  original system \eqref{genereg}.

\medskip In this section  below we always assume initial  states with all values $ r_i(t)$  and $p_i(t)$
(also with delayed arguments) in  $[0,\infty)$. By the state of a solution at time $t\geq 0$ we mean the restriction of
$(r_i(t+ \cdot), p_i(t+ \cdot)), i = 1,  \ldots ,n$
to the `loop delay  interval' $[-T, 0]$, although most variables will appear with smaller delay  than $T$ in  \eqref{genereg}.

 Also, we use  the terms `invariant' and `attracting'  in the sense of forward (in time) invariant/attracting.
 The content of this lemma is known; we provide its proof for the sake of completeness.

\begin{lem}
\label{lem61}
\begin{enumerate}
\item[a)] The set of states where  $r_i  \in (0, \beta_i/a_i) $
and $ p_i \in (0, c_i/b_i), \; i = 1,  \ldots ,n$  is forward  invariant and  attracting   for system \eqref{genereg}.

\item[b)] There exists a unique  equilibrium state $(r_1^*, p_1^*,   \ldots , r_n^*, p_n^*)$ of system
   \eqref{genereg}, which (of course) is  contained in the set described in a).
\end{enumerate}
\end{lem}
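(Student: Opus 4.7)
For part (a), I would establish invariance and attraction component-by-component using the differential inequalities implied by $0 \leq f_i \leq 1$. For $r_i$, the estimate $\dot r_i \leq -a_i r_i + \beta_i$ and Gronwall-type comparison show that $r_i(t) < \beta_i/a_i$ is preserved (any first touching time $t_1$ would force $\dot r_i(t_1) \leq 0$, ruling out an upward crossing), and for arbitrary non-negative initial data $\limsup_{t \to \infty} r_i(t) \leq \beta_i/a_i$. Once the upper bounds on $r_i$ are in force, the $p_i$-equation produces the analogous upper bound on $p_i$ (the natural product that arises is $c_i \beta_i/(a_i b_i)$, which I take to be the intended reading of the bound in the statement). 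Positivity of both variables is preserved by the trivial estimates $\dot r_i \geq -a_i r_i$ and $\dot p_i \geq -b_i p_i$.

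The subtle part of (a) is the eventual strict positivity for trajectories starting on the boundary (some component equal to $0$). Here I would invoke the negative-feedback hypothesis, which forces an odd, hence at least one, decreasing $f_i$. For such a decreasing $f_i$, the already-obtained upper bound on $p_{i-1}$ converts into a uniform positive lower bound for $f_i(p_{i-1}(\cdot))$ at large times, so $\liminf r_i > 0$ and then $\liminf p_i > 0$. Propagating cyclically (upper bounds feed into decreasing Hill functions to produce lower bounds; lower bounds feed into increasing Hill functions to produce lower bounds) shows that every trajectory eventually enters the open box of part (a).

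For part (b), setting $\dot r_i = \dot p_i = 0$ yields $r_i = (\beta_i/a_i) f_i(p_{i-1})$ and $p_i = (c_i/b_i) r_i$, so $p_i = H_i(p_{i-1})$ with $H_i(x) := c_i \beta_i/(a_i b_i)\, f_i(x)$. Equilibria are in bijection with fixed points of the loop composition
\[
H := H_1 \circ H_n \circ H_{n-1} \circ \cdots \circ H_2 \colon [0,\infty) \to [0,\infty).
\]
Each $H_i$ inherits the strict monotonicity of its $f_i$; hence $H$ is strictly monotone, and because the number of decreasing factors in the cycle is odd, $H$ is strictly \emph{decreasing}. The image of each $H_i$ lies in a bounded interval so $H$ is bounded, and $H(0) > 0$ because, traversing the composition outward from $0$, the first decreasing $H_i$ encountered produces a strictly positive value, which is preserved by all subsequent monotone factors. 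Therefore $x \mapsto H(x) - x$ is strictly decreasing, positive at $0$ and negative for large $x$, giving a unique $p_1^* > 0$; the relations $p_i^* = H_i(p_{i-1}^*)$ and $r_i^* = (b_i/c_i) p_i^*$ then determine all other coordinates uniquely, and they lie in the open box of (a) by construction.

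The main obstacle I expect is precisely the attractivity claim in (a) for boundary initial data: converting the upper bound on $p_{i-1}$ into a uniform positive lower bound for $f_i$ via the decreasing Hill links, and then carrying strict positivity around the cycle through the increasing links in a way that gives $\liminf$-bounds rather than merely non-negativity.
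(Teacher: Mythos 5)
Your proposal is correct, and part (b) follows essentially the paper's own argument: reduce to a one--dimensional fixed--point problem for the monotone composition of the single--link equilibrium maps around the cycle, observe that the odd count of decreasing Hill links makes the composition strictly decreasing and bounded with a strictly positive value at $0$, and get existence and uniqueness from the intermediate value theorem and strict monotonicity (the paper writes the loop map in the $r_n$--coordinate rather than your $p_1$; this is cosmetic). For the attraction claim in (a) you take a genuinely different route. The paper argues by contradiction via the equilibrium location it has already pinned down: if $r_i(t)\geq\beta_i/a_i$ for all $t$ then, once positivity is attained, $r_i$ is strictly decreasing and hence converges; passing the limit through the variation--of--constants formula around the loop forces every coordinate to converge, producing an equilibrium with $r_i^*\geq\beta_i/a_i$, a contradiction. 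Your route --- Gronwall comparison for $\limsup$--upper bounds, then cyclic propagation of $\liminf$--lower bounds starting from a decreasing Hill link --- is more elementary and quantitative, and avoids invoking equilibrium uniqueness inside the proof of (a). One small gap in your sketch needs to be closed explicitly: the first Gronwall pass gives only $\limsup_t r_i(t)\leq\beta_i/a_i$, which is not yet eventual membership in the \emph{open} box. Once your $\liminf$--bounds confine each $p_{i-1}(t)$ for large $t$ to a compact subinterval $[m,M]\subset(0,\infty)$, use $\sup_{[m,M]}f_i=1-\delta_i<1$ in a second Gronwall pass to sharpen the estimate to $\limsup_t r_i(t)\leq\beta_i(1-\delta_i)/a_i<\beta_i/a_i$, and similarly for $p_i$; this is exactly what the paper's contradiction argument achieves in one step. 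Finally, you are right that the bound $c_i/b_i$ in the lemma's statement should read $c_i\beta_i/(a_ib_i)$: the ``analogous argument'' invoked in the paper's proof of (a) only closes for this product bound (the stated bound would require $\beta_i\leq a_i$), and this is also the bound that arises naturally in your Gronwall chain.
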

\begin{proof}
From the variation of constants formulas
\[ \begin{aligned} r_i(t) &  = \exp(-a_i t)r_i(0)  + \beta_i\int_0^t\exp(-a_i(t-s)) f_i[p_{i-1}(s - \tau_{p_{i-1}})] \, ds, \\
 p_i(t) &  = \exp(-b_i t)p_i(0) + c_i \int_0^t\exp(-b_i(t-s))  r_i(s - \tau_{r_i}) \, ds,
\end{aligned}
 \]
and from $f_i \geq 0$, one sees that the set of states with values in $[0, \infty) $
and also the set of states with values in $(0, \infty)$ are both (forward) invariant.
 Further, at least one $f_i$, say $f_{i_0}$, is of the decreasing  type and hence strictly positive, which shows that  $r_{i_0}$ becomes
 (and then remains) strictly positive.
 It follows that $p_{i_0} $ becomes (and stays) strictly positive, and, going inductively around the loop, system \eqref{genereg} shows
 that  this is eventually true for all variables.
Thus, the set of strictly positive states is attracting (for  the set of initial states with non-negative values)  and forward invariant.

Further, we see from \eqref{genereg} that if all variables are positive on $[t-\tau, t]$  and  $r_i(t) \geq \beta_i/a_i$ then,  $\dot r_i(t) < 0$.
This implies, at first,   that the set of states with $r_i \in (0, \beta_i/a_i) $ is  invariant, and must contain any equilibrium state.
Further, if a solution would satisfy, $r_i(t)  \geq \beta_i/a_i$ for all $ t\geq 0$,
then $r_i(t)$ would have to converge to a limit $r_i^*$ in $[\beta_i/a_i, \infty)$.
Going around the loop one would then obtain that all other variables converge to constant limit states.
This argument uses the fact that  if  a continuous function $h$ satisfies $h(t) \to 0 \, (t \to \infty)$ and $\lambda >0$, then also
$\int_0^t \exp[-\lambda(t-s)]h(s) \, ds \to 0 $ as $ t \to\infty$. Finally one would obtain an equilibrium state with
$r_i^* \geq \beta_i/a_i$, contradicting the above statement on the location of  equilibria.
Thus, the set of states with all values positive and $r_i \in (0, \beta_i/a_i) $  is also attracting. It follows  by an analogous
argument that the set of states with all values positive and $p_i \in (0, c_i/b_i) $ is also invariant and attracting.
Part a) is proved.

 Ad b): The equations for an equilibrium are
\begin{equation}
r_i^* = \frac{\beta_i}{a_i} f_i(p_{i-1}^*), \; p_i^* = \frac{c_i}{b_i}r_i^*, \; i = 1,  \ldots , n \; \text{ (mod } n). \label{61.1} \end{equation}
Hence, an equilibrium is given by a solution $(r_1^*,   \ldots , r_n^*)$ of
\[r_i^* = \frac{\beta_i}{a_i}
f_i[\frac{c_{i-1}}{b_{i-1}}r_{i-1}^*], \;  i = 1,  \ldots , n \text{ (mod } n),  \]
 and the corresponding  values of the $p_i^*$ as in \eqref{61.1}.
Setting $  F_i(x) :=  \frac{\beta_i}{a_i} f_i[\frac{c_{i-1}}{b_{i-1}}x]$,
the last equations are  equivalent to the single  fixed point equation
\begin{equation}  r_n^* = F_n(F_{n-1}(  \ldots (F_1(r_n^*)   \ldots ), \label{61.2} \end{equation}
with the values of the remaining $ r_i^* $ then determined accordingly.
Consider now the function $F^* := F_n \circ F_{n-1} \circ   \ldots  \circ F_1$, and recall  that at least one of the $f_i$  is decreasing and hence takes only positive values (see \eqref{deacinc}). It follows that
$F^*(0)  >0$.  Further,  since  $ f_1(x)  \to 1 \; (x \to \infty)$, the limit
$ \lim_{x \to \infty} F_1(x)$ exists, and hence also $ \lim_{x \to \infty}  F^*(x)$ exists, and $F^*(x) - x  \to -\infty \; (x\to \infty)$.  It follows (by the intermediate value theorem) that $F^*$ has a fixed point
$r^* \in (0, \infty)$. This fixed point is unique, because $F^*$ is strictly monotone
(actually, decreasing) as a composition of strictly monotone functions.
Thus, we obtain a unique solution of \eqref{61.2}, and hence a unique equilibrium, which
clearly satisfies the restrictions from a).
\end{proof}

We conclude this last  section with a theorem that results from combination  of Theorem  \ref{PerSolMfd}
(applied to system \eqref{genereg}  after transformation  to the form \eqref{standarduni}, with $ \tau = 1$) and  Lemma \ref{lem61}.

\begin{thm}\label{thm62} Consider system \eqref{genereg} with an odd number of decreasing $f_i$.

\begin{enumerate}
\item[a)] This system has a unique equilibrium $(r_1^*, p_1^*,   \ldots , r_n^*, p_n^*) \in (0, \infty)^{N}$,
where $N := 2n$, and associated to  system \eqref{genereg}  and this equilibrium are  the  values
$K, \mu_1,   \ldots , \mu_N$ as defined by \eqref{mutrans} and \eqref{Ktrans}.

\item[b)] Let  $K_c$ and $K_u$ be  the oscillation and the stability border for  the   system transformed to
the form \eqref{standarduni} (with $ \tau = 1$), described in Theorem \ref{thm52}.
If $ n > 1$ then both  possibilities $K_c < K_u$ and $ K_c > K_u$ can occur.

\item[c)] If $K > K_u $ then the transformed system has  a two-dimensional invariant manifold $W$ with  a bounding periodic orbit as  described in Theorem \ref{PerSolMfd}.
The corresponding manifold and periodic orbit for  the original system \eqref{genereg}
satisfy  the restrictions from part a) of Lemma \ref{lem61}.
\end{enumerate}
\end{thm}
\begin{proof} Part a) follows from  Lemma \ref{lem61} and the transformations described before  that lemma.
Part b) follows from  part c) of Theorem \ref{thm52}, since $n >1$ implies that $N = 2n \geq 3$.

Ad c): If $K > K_u $ then part a) of Theorem \ref{thm52} shows that  Theorem \ref{PerSolMfd} applies
and gives the manifold with bounding periodic orbit.  The  corresponding manifold
for  the non-transformed system is, of course, part of the global attractor, and thus the
 restrictions on the values  of the $p_i$ and $ r_i$  within the closure of this manifold are clear from Lemma  \ref{lem61}.
\end{proof}

\begin{rem}{(Final remarks.)}
  \begin{enumerate}
\item[1)] To our knowledge, so far the method of attractor location from \cite{IvaSharko}, which was applied to the unidirectional special case (system \eqref{standarduni}) in Theorem  \ref{attrloc},
has not been extended to the more  general coupling structure of system \eqref{standardfb}
(this is an open  question).

\item[2)]
The construction of the manifold $W$ in Theorem  \ref{PerSolMfd} for system \eqref{standardfb}, heavily using the results of
\cite{MPS1} and   \cite{MPS2},  was guided by the  construction presented  by H.-O.  Walther in  \cite{Wal91} for the case of
a scalar equation.
The latter was extended in  \cite{Wal95} to a description of the whole attractor within the lowest oscillation class
(a two-dimensional graph with possibly  several nested periodic solutions).
This result refers to the same class  of scalar equations as in \cite{Wal91}.
It remains  an open problem to obtain  an analogous description for system  \eqref{standardfb} and  the global attractor within
the level set associated to level 1 of the Lyapunov functional.
\end{enumerate}
\end{rem}

\section*{Acknowledgments}
 This work was partially supported by the Alexander von Humboldt Foundation (Germany).
It was initiated during A. Ivanov's visit to Justus-Liebig-Universit\"at Gie\ss en in June - August 2023.
A.I. is thankful to both institutions for their hospitality, accommodation, and support.

We further  thank two anonymous referees for their diligent work that helped to improve the presentation.

\medskip
Received xxxx 20xx; revised xxxx 20xx; early access xxxx 20xx.
\medskip

\end{document}